\newtheorem{thm}{Theorem}[section]
\newtheorem{lem}{Lemma}[section]
\theoremstyle{definition}
\newtheorem{defn}{Definition}[section]
\theoremstyle{remark}
\newtheorem{remark}{Remark}[section]
\definecolor{cucol}{rgb}{0,0,0.8}
\definecolor{afcol}{rgb}{1,0,0}
\numberwithin{equation}{section}
\begin{document}

\begin{frontmatter}
	
\title{Controllability of fractional stochastic delay dynamical systems}
	
\date{}
	
 \author[]{Arzu Ahmadova}
 \ead{arzu.ahmadova@emu.edu.tr}
 \author[]{Ismail T. Huseynov}
 \ead{ismail.huseynov@emu.edu.tr}
 \author[]{Nazim I. Mahmudov\corref{cor1}}
 \ead{nazim.mahmudov@emu.edu.tr}
 \cortext[cor1]{Corresponding author}
 
	
	\address{ Department of Mathematics, Faculty of Arts and Sciences, Eastern Mediterranean University, Gazimagusa, TRNC, via Mersin 10, Turkey}

	
	
	
	\begin{abstract}
		\noindent In this paper, we consider Caputo type fractional stochastic time-delay system with permutable matrices. We derive stochastic analogue of variation of constants formula via a newly defined delayed Mittag-Leffler type matrix function. Thus, we investigate new results on existence and uniqueness of mild solutions with the help of weighted maximum norm to fractional stochastic time-delay differential equations whose coefficients satisfy standard Lipschitz conditions. The main points in the proof are to apply Ito's isometry and martingale representation theorem, and to show the notion of a coincidence between the integral equation and the mild solution. Finally, we study complete controllability results for linear and nonlinear fractional stochastic delay dynamical systems with Wiener noise. 
	\end{abstract}
		
	\begin{keyword}
		Fractional stochastic time-delay dynamical system \sep delayed Mittag-Leffler type matrix function \sep controllability \sep existence and uniqueness \sep It\^{o}'s isometry
    \end{keyword}
	
\end{frontmatter}

\section{Introduction}

Over the years, many results have been investigated on the theory and applications of stochastic differential equations \cite{oksendal, ito}, \cite{prato}-\cite{liu}. In particular, \textbf{Fractional stochastic differential equations} are a generalization of differential equations by the use of fractional and stochastic calculus. Recently, fractional stochastic differential equations are intensively applied to model mathematical problems in control theory, finance, dynamics of complex systems in engineering, infectious diseases, and other areas \cite{mendes, gangaram}. Most of the results on fractional stochastic dynamical systems are limited to prove existence and uniqueness of mild solutions using fixed point theorem \cite{anh,mahmudov,arzu}. Simultaneously, fractional differential equations has become famous in the last three decades due to its capability to model mathematical tools efficiently \cite{kilbas, diethelm, samko}. Thus, fractional-order models lead to investigation of more accurate solutions in comparison with integer-order ones. It turns out that the fractional derivatives provide the heritable properties of different physical processes more precisely. 

\textbf{Fractional delay differential equations} are differential equations covering fractional derivatives and time-delays. Delay differential equations with fractional-order have achieved a great deal of attention due to their applications in science, engineering and physics using proper numerical methods and graphical tools. In recent decades, the theory of fractional delay differential equations is also well-established by means of analytical methods. Firstly,under the assumptions that $A$ and $B$ are permutable matrices, Khusainov et al. \cite{shuklin} provided a analytical representation of a solution to a linear homogeneous matrix differential equations with a constant delay in terms of infinite series. Notice that fractional analogue of the same problem was considered by Li et al. \cite{li-wang} in particular case of $ A=\Theta$. In another paper, Li et al. \cite{wang-li} introduced a concept of delayed Mittag-Leffler type matrix function via a two-parameter Mittag-Leffler function and presented finite-time stability results to nonlinear fractional delay differential equations in the same special case. Mahmudov \cite{mahmudov2} proposed a newly defined explicit formula to linear homogeneous and nonhomogeneous fractional time-delay systems via two-parameter Mittag-Leffler perturbation in the general case (i.e., $A$ and $B$ are arbitrary constant matrices). Huseynov et al. \cite{ismail} provided a new representation of a solution through a delayed analogue of three-parameter Mittag-Leffler functions under the assumptions in which $A$ and $B$ are permutable matrices. Deriving exact solution representation of fractional delay dynamical system is a way of applying Laplace transform method and variation of constants formula to study stability, controllability, reachability and stabilizability due to their applications in control theory, chaos and bioengineering. Numerical methods for fractional time-delay systems are more intensively studied than analytical methods. Margado et al. \cite{morgado} analyzed numerical schemes for factional-order delay differential equations. Bhalekar et al. used a predictor-corrector scheme for solving nonlinear delay differential equations of fractional-order. One of numerical methods for fractional delay differential equations was provided by Wang \cite{wang}. 

The concept of \textbf{controllability} is a qualitative property of dynamical systems in control theory and an essential structure of applications-oriented mathematics. In above all, control theory characterize a key role in both deterministic and stochastic control systems. In the last few years, controllability problems for different types of linear and nonlinear  differential equations in finite and infinite dimensional spaces have been established in many publications \cite{zhou,nirmala,rajendran, mabel}. Sakthivel et al. \cite{sakthivel} described a new set of sufficient conditions for approximate controllability of nonlinear fractional stochastic evolution equation in Hilbert spaces using some techniques and methods adopted from deterministic control problems. Rajendran et al.\cite{rajendran}
obtained the sufficient conditions for complete controllability of stochastic fractional neutral systems with Wiener and L\`{e}vy noise. Meanwhile, Rajendran et al.\cite{mabel} studied the controllability of linear and nonlinear stochastic fractional systems with bounded operator having distributed delay in control. For linear case, necessary and sufficient conditions are obtained. Moreover, nonlinear system corresponding to linear system was shown under the sufficient conditions by using Banach contraction mapping principle in \cite{mabel}. For more recent research collaborations, relative controllability of systems with pure delay, control of oscillating systems with a single delay and controllability of nonlinear fractional delay dynamical systems with prescribed controls one can refer reader to study \cite{liang,khusainov-shuklin,xiao, diblik, mengmeng}. 

Very recently, the authors in \cite{arzu} established new results on the existence and uniqueness of mild solutions to stochastic neutral differential equations involving Caputo fractional time derivative operator and derived a stochastic version of variation of constants formula for Caputo fractional-order differential systems. On the other hand, a new representation of a solution to linear homogeneous fractional differential equations with a constant delay  using the Laplace integral transform and variation of constants formula via a newly defined delayed Mittag-Leffler type matrix function was introduced in terms of a three-parameter Mittag-Leffler function in \cite{ismail}. Furthermore, our current work is motivated by above ongoing studies conducted in \cite{arzu} and \cite{ismail}, in which certain important calculations involving together delay and stochastic parts are discovered. Although, we can not apply Laplace transform for stochastic part of fractional delay differential equations system, a variation of constants formula lead us to define solution of fractional stochastic delay differential equations by following some results from \cite{ismail} and to apply existence and uniqueness results to a class of stochastic fractional delay differential equations through Banach contraction mapping principle under similar concepts in \cite{arzu}. Therefore, the main point in this paper is to find explicit solution representation of fractional stochastic  delay differential equation by following the work of \cite{ismail} to study complete controllability problems for linear and nonlinear cases.

The paper includes significant updates in the theory of stochastic fractional delay differential equations and is organized as follows. Section \ref{sec:prel} is a preparatory section where we recall main results from fractional calculus and prove the powerful lemma which is used throughout the main results. Section \ref{sec: Mittag} is devoted to present an explicit solutions in terms of three-parameter Mittag-Leffler functions for homogeneous and nonhomogeneous linear fractional delay dynamical systems involving Caputo fractional derivative by using the method of variation of constants. In Section \ref{sec: main}, we prove the existence and uniqueness of the mild solution to \eqref{Problem1} with Lipschitz conditions under the Banach contraction mapping principle through the appropriate weighted maximum norm. To do so, we derive a stochastic analogue of fractional delay differential equations via newly defined delayed Mittag-Leffler type matrix function in Section \ref{sec: Mittag}, and we show the coincidence between integral equation and mild solution of \eqref{Problem1}. Section \ref{sec:control} is devoted to investigate complete controllability results for linear and nonlinear fractional stochastic delay differential equations system with Wiener noise. In Section \ref{sec:discus}, we provide an outline for our main contributions and show some open problems in the same vein of this research work.

\section{Preliminaries} \label{sec:prel}

We assume a filtered probability space $(\Omega, \mathscr{F}, \mathbb{F}_{T},\textbf{P})$ for $T>0$, with some filtration $\mathbb{F}_{T} \coloneqq \left\lbrace \mathscr{F}_{t}\right\rbrace_{t \in [0, T]}$  satisfying usual conditions, namely it is increasing and right-continuous while $\mathscr{F}_{0}$ consists of all \textbf{P}-null sets. $H^{2}([0,T], \mathbb{R}^{n})$ denote the space of all $\mathscr{F}_{T}$-measurable processes $\xi$ satisfying 
\begin{equation*}
\| \xi\|^{2}_{H^{2}} \coloneqq \sup_{t \in [0,T]} \textbf{E}\|\xi(t)\|^{2}< \infty,
\end{equation*}
where $\textbf{E}$ denotes expectation with respect to probability measure $\textbf{P}$. \\
 Let $\mathbb{R}^{n}$ be endowed with the standard Euclidean norm and $U_{ad}\coloneqq L^{\mathscr{F}}_{2}([0,T], \mathbb{R}^{n})$ be a control set.\\
Now we recall an essential structure of fractional calculus (for the more salient details on the matter, see \cite{kilbas}-\cite{samko}.

\begin{defn} \cite{diethelm}
	The Riemann-Liouville integral operator of fractional order $0<\alpha <1$ is defined by
	\begin{equation}
	(\prescript{}{}I^{\alpha}_{0^{+}}f)(t)=\frac{1}{\Gamma(\alpha)}\int_0^t(t-s)^{\alpha-1}f(s)\,\mathrm{d}s \\,\quad \text{for} \quad t>0,
	\end{equation}
	where $\Gamma:(\,0,\infty)\ \to \mathbb{R}$ is the well-known Euler's Gamma function defined as 
	\begin{equation*}
	\Gamma(\alpha) \coloneqq \int_{0}^{\infty}\tau^{\alpha-1}\exp(-\tau)\mathrm{d}\tau,
	\end{equation*} 
\end{defn}
\begin{defn} \label{beta} \cite{rainville}
	The Beta function is defined by  the definite integral:
	\begin{equation}
	\mathbb{B}(a,b)=\int_{0}^{1}\tau^{a-1}(1-\tau)^{b-1}\mathrm{d}\tau , \quad  \text{for} \quad a>0, b>0. 
	\end{equation}
\end{defn}
Also, the relation between Gamma and Beta function are as follows:
\begin{equation*}
\mathbb{B}(a,b)=\frac{\Gamma(a)\Gamma(b)}{\Gamma(a+b)},\quad \quad \text{for} \quad  a>0, b>0. 
\end{equation*}
\begin{defn} \cite{diethelm}
	The Riemann-Liouville fractional derivative of order $0< \alpha <1$ for a function \linebreak $f: [0, \infty) \to \mathbb{R}^{n}$ is defined by 
	\begin{equation}
	(\prescript{}{}D^{\alpha}_{0^{+}}f)(t)=\frac{1}{\Gamma(1-\alpha)}\frac{d}{dt}\int_0^t(t-s)^{-\alpha}f(s)\,\mathrm{d}s,\quad \text{for} \quad t>0.
	\end{equation}
\end{defn}
\begin{defn} \cite{kilbas} 
	The Caputo fractional derivative  of  order $0<\alpha<1$ for a function $f:[0,\infty)\to \mathbb{R}^{n}$ is defined by
	\begin{equation}
	(\prescript{C}{}D^{\alpha}_{0^{+}}f)(t)=\frac{1}{\Gamma(1-\alpha)}\int_0^t(t-s)^{-\alpha}f'(s)\,\mathrm{d}s \\,\quad \text{for} \quad t>0,
	\end{equation}
	in particular,
	\begin{equation}
	(\prescript{}{}I^{\alpha}_{0^{+}}\prescript{C}{}D^{\alpha}_{0^{+}})f(t)=f(t)-f(0).
	\end{equation}
\end{defn}

The classical matrix \textbf{Mittag-Leffler function}, defined as :
\begin{equation}
E_{\alpha}(At^{\alpha})= \sum_{k=0}^{\infty}A^{k}\frac{t^{k \alpha}}{\Gamma(k \alpha +1)}, \quad  \alpha \in \mathbb{R}_{+}, t \in \mathbb{R}, A\in\mathbb{R}^{n\times n},
\end{equation}
has been extended and generalized in different ways, with functions denoted by "two-parameter" $E_{\alpha,\beta}(At^{\alpha})$ and "three-parameter" $E^{\delta}_{\alpha,\beta}(At^{\alpha})$ matrix Mittag-Leffler functions. 

\begin{equation}
E_{\alpha,\beta}(At^{\alpha})= \sum_{k=0}^{\infty}A^{k}\frac{t^{k \alpha}}{\Gamma(k \alpha +\beta)}, \quad  \alpha,\beta \in \mathbb{R}_{+}, t \in \mathbb{R}, A\in\mathbb{R}^{n\times n},
\end{equation}

	\begin{equation}
	E^{\delta}_{\alpha,\beta}(At^{\alpha})=\sum_{k=0}^{\infty}A^{k}\frac{(\delta)_{k}}{\Gamma(k \alpha +\beta)}\frac{t^{k\alpha}}{k!}, \alpha,\beta,\delta \in\mathbb{R_{+}}, t\in\mathbb{R},A\in\mathbb{R}^{n\times n},
	\end{equation}
	where $(\delta)_{k}$ is the Pochhammer symbol \cite{rainville}. 

The following lemma is powerful tool to obtain certain estimations in the main results of the theory.

\begin{lem} \label{ineqMit}
	For all $\gamma, t>0$, we have 
	\begin{equation*}
	\frac{\gamma}{\Gamma(2\alpha-1)}\int_{0}^{t}(t-s)^{2\alpha-2}E_{2\alpha-1}(\gamma s^{2\alpha-1})ds \leq E_{2\alpha-1}(\gamma t^{2\alpha-1}).
	\end{equation*}
\end{lem}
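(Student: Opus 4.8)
The plan is to expand the one-parameter Mittag-Leffler function appearing under the integral sign into its defining power series, to integrate term by term, and then to recognize the outcome as the same Mittag-Leffler series with its constant term removed. Throughout I would write $\beta \coloneqq 2\alpha - 1$, which is positive in the range of $\alpha$ under consideration, so that the inequality to be proved reads $\frac{\gamma}{\Gamma(\beta)}\int_{0}^{t}(t-s)^{\beta-1}E_{\beta}(\gamma s^{\beta})\,ds \le E_{\beta}(\gamma t^{\beta})$.

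First I would substitute the series $E_{\beta}(\gamma s^{\beta}) = \sum_{k=0}^{\infty}\frac{\gamma^{k}s^{k\beta}}{\Gamma(k\beta+1)}$ into the left-hand side. Since every term is nonnegative on $[0,t]$ for $\gamma, t > 0$, Tonelli's theorem (or, equivalently, the locally uniform convergence of the Mittag-Leffler series) permits interchanging the summation and the integration, reducing the task to the elementary integrals $\int_{0}^{t}(t-s)^{\beta-1}s^{k\beta}\,ds$. The change of variable $s = tu$ turns each of these into $t^{(k+1)\beta}\mathbb{B}(k\beta+1,\beta)$, and the Gamma--Beta relation recalled after Definition \ref{beta} rewrites it as $t^{(k+1)\beta}\frac{\Gamma(k\beta+1)\Gamma(\beta)}{\Gamma((k+1)\beta+1)}$. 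Putting this back, the factors $\Gamma(k\beta+1)$ and $\Gamma(\beta)$ cancel, so the left-hand side becomes $\sum_{k=0}^{\infty}\frac{\gamma^{k+1}t^{(k+1)\beta}}{\Gamma((k+1)\beta+1)}$; re-indexing by $j = k+1$ identifies this sum with $E_{\beta}(\gamma t^{\beta}) - 1$.

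Finally, because the $j=0$ term of $E_{\beta}(\gamma t^{\beta})$ equals $1$ and all remaining terms are nonnegative when $\gamma, t > 0$, we have $E_{\beta}(\gamma t^{\beta}) \ge 1$, whence $\frac{\gamma}{\Gamma(\beta)}\int_{0}^{t}(t-s)^{\beta-1}E_{\beta}(\gamma s^{\beta})\,ds = E_{\beta}(\gamma t^{\beta}) - 1 \le E_{\beta}(\gamma t^{\beta})$, which is the assertion. The only genuine point requiring care is the justification of the term-by-term integration; the remainder is bookkeeping with the Beta integral. It is worth noting that the argument actually delivers the sharper identity in which the right-hand side is lowered by $1$, which may be convenient when the lemma is invoked later.
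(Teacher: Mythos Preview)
Your proof is correct and follows essentially the same route as the paper's own argument: expand $E_{2\alpha-1}(\gamma s^{2\alpha-1})$ as a power series, integrate term by term via the Beta integral, re-index to obtain $E_{2\alpha-1}(\gamma t^{2\alpha-1})-1$, and conclude. The only differences are cosmetic---your substitution $\beta=2\alpha-1$ and your explicit appeal to Tonelli's theorem to justify the interchange of sum and integral, which the paper leaves implicit.
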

\begin{proof}
\begin{align*}
	\frac{\gamma}{\Gamma(2\alpha-1)}\int_{0}^{t}(t-s)^{2\alpha-2}E_{2\alpha-1}(\gamma s^{2\alpha-1})ds
	&= \frac{\gamma}{\Gamma(2\alpha-1)}
	\sum_{i=0}^{\infty}\frac{\gamma^{i}}{\Gamma(i(2\alpha-1)+1)}\int_{0}^{t}(t-s)^{2\alpha-2}s^{i(2\alpha-1)}ds \\
	&=\sum_{i=0}^{\infty}\frac{\gamma^{i+1}t^{(i+1)(2\alpha-1)}}{\Gamma(2\alpha-1)\Gamma(i(2\alpha-1)+1)}\mathbb{B}(2\alpha-1, i(2\alpha-1)+1) \\
	&= \sum_{i=0}^{\infty} \frac{\gamma^{i+1}t^{(i+1)(2\alpha-1)}}{\Gamma((i+1)(2\alpha-1)+1)}
	=\sum_{i=1}^{\infty}\frac{\gamma^{i}t^{i(2\alpha-1)}}{\Gamma(i(2\alpha-1)+1)}\\
	&=E_{2\alpha -1}(\gamma t^{2\alpha -1})-1 \leq E_{2\alpha -1}(\gamma t^{2\alpha -1}),
	\end{align*}
	where $\mathbb{B}$ is a Beta function.
\end{proof}

\section{Deterministic analogue of a fractional stochastic time-delay system} \label{sec: Mittag}

We consider linear homogeneous fractional time-delay system with a single constant delay:

\begin{equation} \label{11}
\begin{cases}
(\prescript{C}{}D^{\alpha}_{0^{+}}x)(t)= A x(t)+Bx(t-h), \quad x(t) \in \mathbb{R}^{n}, \quad t \in (0, T], \quad h>0, \\
x(t)=\phi(t), \quad -h\leq t \leq 0.
\end{cases}
\end{equation}
where $A,B \in \mathbb{R}^{n \times n}$ are permutable matrices, i.e., $AB=BA$ and $\phi : [-h, 0] \to \mathbb{R}^{n}$ is an arbitrary differentiable vector function, i.e.,$\phi \in C^{1}([-h,0], \mathbb{R}^{n})$, and  $T=nh$ for a fixed natural number $n$.\\

The following definitions and theorems, which are provided in \cite{ismail}, are important results to derive stochastic version of variation of constants formula for fractional delay differential equations in the next section.
\begin{defn} \label{df1}
	Delayed classical Mittag-Leffler type matrix function of three parameters $\mathscr{E}^{A,B}_{h,\alpha}:\mathbb{R}\to \mathbb{R}^{n\times n}$ is defined by
	
	\begin{equation} \label{delayMittagAB}
	\mathscr{E}^{A,B}_{h,\alpha}(t)=
	\begin{cases}
	\Theta, \quad -\infty <t< -h, \\
	I, \quad -h\leq t\leq 0, \\
	I+ t^{\alpha}E^{1}_{\alpha,\alpha+1}(At^{\alpha})(A+B)
	+(t-h)^{2\alpha}BE^{2}_{\alpha,2\alpha+1}(A(t-h)^{\alpha})(A+B)\\
	+\cdots+ (t-(n-1)h)^{n\alpha}B^{n-1}E^{n}_{\alpha,n\alpha+1}(A(t-(n-1)h)^{\alpha})(A+B), (n-1)h <t\leq nh,
	\end{cases}
	\end{equation}
	where $\Theta \in \mathbb{R}^{n \times n}$ and $I \in \mathbb{R}^{n \times n}$ denote the zero and identity matrices, respectively.
\end{defn}
We denote the matrix function by $X(t)$ that  is a solution of the matrix differential equation:
\begin{equation} \label{2.2}
(\prescript{C}{}D^{\alpha}_{0^{+}}X)(t)= AX(t)+ BX(t-h), \quad t > 0,
\end{equation}
with the unit initial conditions:
\begin{equation} \label{333}
X(t)=
\begin{cases}
 I, \quad -h \leq t \leq 0,\\ \Theta, \quad t < -h.
\end{cases}
\end{equation}
\begin{thm} \label{thm666}
	The solution of equation \eqref{2.2} satisfying initial conditions \eqref{333} has the form:

	\begin{align*}
	&X(t)= I+ \sum_{k=0}^{n-1} \varphi_{k}(t), \quad if \quad (n-1)h < t \leq nh, \\
	&\varphi_{k}(t)= (t-kh)^{(k+1)\alpha}\sum_{i=0}^{\infty} \begin{pmatrix}
	k+i \\ i \end{pmatrix} A^{i}\frac{(t-kh)^{i\alpha}}{\Gamma((k+i+1)\alpha+1)}B^{k}(A+B)\\
	&=(t-kh)^{(k+1)\alpha}E^{k+1}_{\alpha,(k+1)\alpha+1}(A(t-kh)^{\alpha}) B^{k}(A+B), \quad t> kh,
	\end{align*}
	where 
	\begin{equation*}
\begin{pmatrix}	k+i \\ i \end{pmatrix}	= \frac{(k+1)_{i}}{i!}.
	\end{equation*}
\end{thm}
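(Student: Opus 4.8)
The plan is to argue by the method of steps combined with induction on the index $n$ of the interval $((n-1)h,nh]$ containing $t$, but to carry out every computation at the level of the equivalent Volterra integral equation rather than by differentiating the candidate function (which at the knots $t=kh$ is only $(k+1)\alpha$-H\"older, not $C^{1}$). Applying $\prescript{}{}I^{\alpha}_{0^{+}}$ to \eqref{2.2}, and using $(\prescript{}{}I^{\alpha}_{0^{+}}\prescript{C}{}D^{\alpha}_{0^{+}})X(t)=X(t)-X(0)$ together with \eqref{333}, one sees that \eqref{2.2}--\eqref{333} is equivalent to
\begin{equation*}
X(t)=I+\frac{1}{\Gamma(\alpha)}\int_{0}^{t}(t-s)^{\alpha-1}\bigl(AX(s)+BX(s-h)\bigr)\,\mathrm{d}s,\qquad t>0,
\end{equation*}
with $X(s)=I$ on $[-h,0]$. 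This weakly singular Volterra equation has a unique continuous $\mathbb{R}^{n\times n}$-valued solution on $[0,T]$ (by a Banach fixed point argument in a Bielecki-type weighted norm, or a fractional Gronwall estimate), so it suffices to exhibit one solution and verify it matches the claimed expression.

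First I would rewrite the candidate in the ``global'' form $X(t)=I+\sum_{k\ge 0}\varphi_{k}(t)$ for $t>0$, each $\varphi_{k}$ extended by $\Theta$ on $(-\infty,kh]$; since $\varphi_{k}$ is supported in $(kh,\infty)$ this sum is locally finite, it reduces on $((n-1)h,nh]$ to $I+\sum_{k=0}^{n-1}\varphi_{k}(t)$ (the asserted formula), and $\varphi_{k}(kh)=\Theta$ yields continuity and $X(0)=I$. The global form also gives $X(s-h)=I+\sum_{k\ge 0}\varphi_{k}(s-h)$ for every $s>0$, so by linearity the verification of the Volterra equation reduces to computing $\prescript{}{}I^{\alpha}_{0^{+}}\bigl((A+B)I\bigr)(t)$, $\prescript{}{}I^{\alpha}_{0^{+}}(A\varphi_{k})(t)$ and $\prescript{}{}I^{\alpha}_{0^{+}}\bigl(B\varphi_{k}(\cdot-h)\bigr)(t)$ for each $k$, and summing.

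Expanding $\varphi_{k}(s)=\sum_{i\ge 0}\binom{k+i}{i}A^{i}\frac{(s-kh)_{+}^{(k+i+1)\alpha}}{\Gamma((k+i+1)\alpha+1)}B^{k}(A+B)$ and interchanging summation with integration (justified by uniform convergence on compacts), every term is governed by the single integral $\frac{1}{\Gamma(\alpha)}\int_{kh}^{t}(t-s)^{\alpha-1}(s-kh)^{m\alpha}\,\mathrm{d}s$, which by the substitution $s=kh+(t-kh)u$ and Definition~\ref{beta} equals $\frac{\Gamma(m\alpha+1)}{\Gamma((m+1)\alpha+1)}(t-kh)^{(m+1)\alpha}$. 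Carrying this out -- using $AB=BA$ and the fact that the $h$-shift turns $(s-kh)$ into $(t-(k+1)h)$ under the integral for the delayed term -- and reindexing, $\prescript{}{}I^{\alpha}_{0^{+}}\bigl(AX(\cdot)+BX(\cdot-h)\bigr)(t)$ becomes a double series in which the coefficient of $A^{i}(t-kh)_{+}^{(k+i+1)\alpha}B^{k}(A+B)/\Gamma((k+i+1)\alpha+1)$ equals $\binom{k+i-1}{i-1}+\binom{k+i-1}{i}$ (the first binomial from the $A$-term, the second from the delayed term, with the evident boundary conventions at $i=0$ or $k=0$). Pascal's rule collapses this to $\binom{k+i}{i}$, so the series is precisely $\sum_{k\ge 0}\varphi_{k}(t)=X(t)-I$; hence $X$ solves the Volterra equation and uniqueness completes the proof.

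I expect the only real work to be this last bookkeeping step: tracking the index shifts produced by the delay and by the extra factor $A$, justifying the term-by-term fractional integration, and recognizing that the recombination of binomial coefficients is exactly Pascal's identity (equivalently, the Vandermonde identity for the Pochhammer symbols $(k+1)_{i}$). As a fallback I would keep a Laplace-transform route in reserve, possibly recorded as a remark: transforming \eqref{2.2}, expanding $\widehat{X}(s)=(s^{\alpha}I-A-Be^{-sh})^{-1}(\cdots)$ as a Neumann series in $Be^{-sh}$, and inverting each term by $\mathcal{L}^{-1}\{s^{\alpha\delta-\beta}(s^{\alpha}-\lambda)^{-\delta}\}(t)=t^{\beta-1}E^{\delta}_{\alpha,\beta}(\lambda t^{\alpha})$ yields the same formula.
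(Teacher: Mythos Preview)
The paper does not actually prove this theorem: it is quoted verbatim from the companion manuscript \cite{ismail} (see the sentence introducing Definition~\ref{df1} and Theorems~\ref{thm666}--\ref{th3.1}), so there is no in-paper argument to compare against. Your Volterra-equation verification is nonetheless a complete and correct proof: the reduction to the integral equation via $\prescript{}{}I^{\alpha}_{0^{+}}\prescript{C}{}D^{\alpha}_{0^{+}}X=X-I$ is standard, the Beta-integral evaluation $\frac{1}{\Gamma(\alpha)}\int_{kh}^{t}(t-s)^{\alpha-1}(s-kh)^{m\alpha}\,\mathrm{d}s=\frac{\Gamma(m\alpha+1)}{\Gamma((m+1)\alpha+1)}(t-kh)^{(m+1)\alpha}$ is exactly right, and the Pascal recombination $\binom{k+i-1}{i-1}+\binom{k+i-1}{i}=\binom{k+i}{i}$ is precisely what closes the induction. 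The Laplace-transform fallback you sketch is in fact the route taken in \cite{ismail} for the companion function $\mathscr{E}^{A,B}_{h,\alpha,\beta}$, so recording it as a remark would align your exposition with the cited source; your direct integral-equation check has the advantage of avoiding any growth hypotheses needed to justify the Laplace inversion.
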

$\mathscr{E}^{A,B}_{h,\alpha}(t)$ which is stated in the following theorem coincides with $X(t)$ that is introduced in Theorem \ref{thm666}.
\begin{thm} \label{th3.1}
	For a delayed Mittag-Leffler type matrix function of three-parameters $\mathscr{E}^{A,B}_{h,\alpha}:\mathbb{R}\to \mathbb{R}^{n\times n}$, one has 
	\begin{equation} \label{D2}
	(\prescript{C}{}D^{\alpha}_{0^{+}}\mathscr{E}^{A,B}_{h,\alpha})(t)= A\mathscr{E}^{A,B}_{h,\alpha}(t) + B\mathscr{E}^{A,B}_{h,\alpha}(t-h),
	\end{equation}
	i.e., $\mathscr{E}^{A,B}_{h,\alpha}$ is a solution of matrix fractional differential equation:
	\begin{equation} \label{matX}
	(\prescript{C}{}D^{\alpha}_{0^{+}}X)(t)= AX(t) + BX(t-h)
	\end{equation}
	which satisfies unit initial conditions $\mathscr{E}^{A,B}_{h,\alpha}(t)=I, -h \leq t \leq 0$ and \quad  $\mathscr{E}^{A,B}_{h,\alpha}(t)=\Theta, t< -h$.
\end{thm}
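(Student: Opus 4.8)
The plan is to obtain Theorem~\ref{th3.1} from Theorem~\ref{thm666} by showing that the piecewise-defined function $\mathscr{E}^{A,B}_{h,\alpha}$ is nothing but the matrix $X$ constructed there. Fix $n\in\mathbb{N}$ and $t\in((n-1)h,nh]$. In Definition~\ref{df1} the $(k+1)$-st block of $\mathscr{E}^{A,B}_{h,\alpha}(t)$, for $k=0,1,\dots,n-1$, is $(t-kh)^{(k+1)\alpha}B^{k}E^{k+1}_{\alpha,(k+1)\alpha+1}\!\big(A(t-kh)^{\alpha}\big)(A+B)$, while Theorem~\ref{thm666} gives $\varphi_{k}(t)=(t-kh)^{(k+1)\alpha}E^{k+1}_{\alpha,(k+1)\alpha+1}\!\big(A(t-kh)^{\alpha}\big)B^{k}(A+B)$. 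Since $E^{k+1}_{\alpha,(k+1)\alpha+1}\!\big(A(t-kh)^{\alpha}\big)$ is a convergent power series in $A$ and $AB=BA$, the factor $B^{k}$ commutes through this series, so the two blocks agree for each $k$; adding $I$ and summing over $k=0,\dots,n-1$ gives $\mathscr{E}^{A,B}_{h,\alpha}(t)=X(t)$ on $((n-1)h,nh]$. On $[-h,0]$ both functions equal $I$ and on $(-\infty,-h)$ both equal $\Theta$, so $\mathscr{E}^{A,B}_{h,\alpha}\equiv X$ on $\mathbb{R}$.

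With this identification the theorem follows at once: by Theorem~\ref{thm666}, $X$ is the solution of \eqref{2.2} under the unit initial conditions \eqref{333}, hence $\mathscr{E}^{A,B}_{h,\alpha}$ satisfies \eqref{D2}, i.e., it solves \eqref{matX}; and the initial conditions $\mathscr{E}^{A,B}_{h,\alpha}(t)=I$ for $-h\le t\le0$ and $\mathscr{E}^{A,B}_{h,\alpha}(t)=\Theta$ for $t<-h$ are immediate from Definition~\ref{df1}.

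Should a self-contained argument be preferred, I would instead verify the equivalent Volterra identity $\mathscr{E}^{A,B}_{h,\alpha}(t)=I+\frac{1}{\Gamma(\alpha)}\int_{0}^{t}(t-s)^{\alpha-1}\big[A\mathscr{E}^{A,B}_{h,\alpha}(s)+B\mathscr{E}^{A,B}_{h,\alpha}(s-h)\big]\mathrm{d}s$ by induction on the delay interval $((n-1)h,nh]$: expand every block $\varphi_{k}$ as a power series in $(t-kh)$, split $\int_{0}^{t}$ at the nodes $h,2h,\dots,(n-1)h$ so that the correct piecewise form of $\mathscr{E}^{A,B}_{h,\alpha}$ is used on each subinterval, integrate term by term using the Beta-function identity recalled in Section~\ref{sec:prel}, and collect the resulting series; the relation $(\prescript{}{}I^{\alpha}_{0^{+}}\prescript{C}{}D^{\alpha}_{0^{+}})f=f-f(0)$ then converts this back to \eqref{D2}. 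In either route the genuine difficulty --- and the reason Theorem~\ref{th3.1} is not merely a one-line remark --- is the same: the Caputo operator in \eqref{D2} is based at $0$, whereas the $k$-th block of $\mathscr{E}^{A,B}_{h,\alpha}$ is a shifted power series in $t-kh$, so one cannot fractionally differentiate block by block with the naive power rule, and the term $B\mathscr{E}^{A,B}_{h,\alpha}(t-h)$ must be shown to account exactly for the contributions coming from the earlier intervals $(0,kh]$, using the continuity of the blocks across the nodes $t=kh$ to splice the pieces. The permutability $AB=BA$ is what keeps this bookkeeping closed in finitely many terms.
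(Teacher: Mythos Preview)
Your proposal is correct and matches the paper's own treatment: the paper does not give an independent proof of Theorem~\ref{th3.1} but simply remarks, just before the statement, that ``$\mathscr{E}^{A,B}_{h,\alpha}(t)$ \dots\ coincides with $X(t)$ that is introduced in Theorem~\ref{thm666}'' and cites both theorems from \cite{ismail}. Your identification of the blocks of $\mathscr{E}^{A,B}_{h,\alpha}$ with the $\varphi_{k}$ of Theorem~\ref{thm666} (using $AB=BA$ to commute $B^{k}$ through the power series in $A$) is exactly this coincidence made explicit, and your optional Volterra-identity route goes beyond what the paper provides.
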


\begin{thm}
	A solution $x\in C([-h,T], \mathbb{R}^{n})$ of \eqref{11} can be represented by the following formula:
	\begin{equation} \label{detsol}
	x(t)=\mathscr{E}^{A,B}_{h,\alpha}(t)\phi(-h)+ \int_{-h}^{0}\mathscr{E}^{A,B}_{h,\alpha}(t-h-r)\phi^{\prime}(r)dr.
	\end{equation}
\end{thm}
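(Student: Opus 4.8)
The plan is to prove the representation by \emph{direct verification}. Write $\tilde x(t)$ for the right-hand side of \eqref{detsol}; I would show that $\tilde x\in C([-h,T],\mathbb{R}^n)$, that $\tilde x\equiv\phi$ on $[-h,0]$, and that $\tilde x$ solves the Caputo system \eqref{11} on $(0,T]$. Uniqueness of solutions of \eqref{11} — available from the equivalent Volterra integral equation, or from the Banach contraction argument used later for the nonlinear problem — then forces $x=\tilde x$, which is the asserted formula. An alternative, in the spirit of \cite{ismail}, is to apply the Laplace transform to \eqref{11}, solve the resulting algebraic system $\widehat{x}(s)=(s^\alpha I-A-Be^{-hs})^{-1}\bigl(s^{\alpha-1}\phi(0)+Be^{-hs}\int_{-h}^{0}e^{-s\tau}\phi(\tau)\,d\tau\bigr)$, identify $s^{\alpha-1}(s^\alpha I-A-Be^{-hs})^{-1}$ with the Laplace transform of $\mathscr{E}^{A,B}_{h,\alpha}$ (computed term by term from Definition \ref{df1}), and invert by the convolution theorem after an integration by parts that exchanges $\phi$ for $\phi'$; I would present the direct verification, which avoids transform-inversion technicalities.

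For the initial data: on $[-h,0]$ one has $\mathscr{E}^{A,B}_{h,\alpha}(t)=I$, while the argument $t-h-r$ of the kernel inside the integral lies in $[-h,0]$ exactly for $r\in[-h,t]$ and is $<-h$ (so the kernel is $\Theta$) for $r\in(t,0]$; hence $\int_{-h}^{0}\mathscr{E}^{A,B}_{h,\alpha}(t-h-r)\phi'(r)\,dr=\int_{-h}^{t}\phi'(r)\,dr=\phi(t)-\phi(-h)$, so $\tilde x(t)=\phi(-h)+\phi(t)-\phi(-h)=\phi(t)$; continuity of $\tilde x$ follows from that of $\mathscr{E}^{A,B}_{h,\alpha}$ on $\mathbb{R}$ (each block $\varphi_k$ of Theorem \ref{thm666} vanishes at its left endpoint $t=kh$, so the piecewise definition glues continuously) together with a dominated-convergence argument. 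For the equation, I would apply $\prescript{C}{}D^\alpha_{0^+}$ to $\tilde x$ term by term: by \eqref{D2}, $\prescript{C}{}D^\alpha_{0^+}\bigl(\mathscr{E}^{A,B}_{h,\alpha}(\cdot)\phi(-h)\bigr)(t)=A\mathscr{E}^{A,B}_{h,\alpha}(t)\phi(-h)+B\mathscr{E}^{A,B}_{h,\alpha}(t-h)\phi(-h)$; for the integral term I would interchange $\prescript{C}{}D^\alpha_{0^+}$ with $\int_{-h}^{0}(\cdot)\,dr$ — legitimate by Fubini, as $(t-s)^{-\alpha}$ is integrable on $[0,t]$ and the $r$-range $[-h,0]$ is separated from $(0,t]$ — exploit that $\mathscr{E}^{A,B}_{h,\alpha}$ is constant on $[-h,0]$, so that for $r\le 0$ the operator $\prescript{C}{}D^\alpha_{0^+}$ commutes with the shift $t\mapsto t-h-r$ over the relevant range, and apply \eqref{D2} once more. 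Recombining, after re-absorbing $A\int_{-h}^{0}\mathscr{E}^{A,B}_{h,\alpha}(t-h-r)\phi'(r)\,dr$ into $A\bigl(\tilde x(t)-\mathscr{E}^{A,B}_{h,\alpha}(t)\phi(-h)\bigr)$ and $\int_{-h}^{0}\mathscr{E}^{A,B}_{h,\alpha}(t-2h-r)\phi'(r)\,dr$ into $\tilde x(t-h)-\mathscr{E}^{A,B}_{h,\alpha}(t-h)\phi(-h)$ — using the initial-data computation to supply $\tilde x(t-h)=\phi(t-h)$ when $t\in(0,h]$ — the $\phi(-h)$-contributions cancel and one arrives at $\prescript{C}{}D^\alpha_{0^+}\tilde x(t)=A\tilde x(t)+B\tilde x(t-h)$.

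The main obstacle is this last step. The Caputo operator $\prescript{C}{}D^\alpha_{0^+}$ is nonlocal and not shift-invariant, and $\mathscr{E}^{A,B}_{h,\alpha}$ is only piecewise smooth — it is not classically differentiable at the nodes $t=jh$ — so the identities that reduce $\prescript{C}{}D^\alpha_{0^+}$ of the shifted kernel $t\mapsto\mathscr{E}^{A,B}_{h,\alpha}(t-h-r)$ to \eqref{D2} must be justified rather than read off formally. The clean route is to argue interval by interval on $((j-1)h,jh]$, substitute the absolutely and uniformly convergent three-parameter series of Definition \ref{df1} and Theorem \ref{thm666}, differentiate term by term via $\prescript{C}{}D^\alpha_{0^+}t^{\beta}=\frac{\Gamma(\beta+1)}{\Gamma(\beta+1-\alpha)}t^{\beta-\alpha}$, and reassemble; keeping track of the summands not carried by the $A$-series is the most delicate part of the bookkeeping. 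Once the equation and the initial condition are verified, uniqueness for \eqref{11} shows that every solution $x$ of \eqref{11} is given by \eqref{detsol}.
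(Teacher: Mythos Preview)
The paper does not prove this theorem: it is quoted from \cite{ismail} (see the sentence preceding Definition~\ref{df1}), and according to the introduction the argument there proceeds via the Laplace transform. So there is no in-paper proof to compare your verification against; the Laplace route you sketch as an ``alternative'' is in fact the cited approach.

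More importantly, your direct verification does not close on the first interval $(0,h]$. You note correctly that constancy of $\mathscr{E}^{A,B}_{h,\alpha}$ on $[-h,0]$ makes $\prescript{C}{}D^\alpha_{0^+}$ commute with the shift $t\mapsto t-h-r$, and then invoke \eqref{D2}; but \eqref{D2} is an identity for \emph{positive} arguments of $\mathscr{E}^{A,B}_{h,\alpha}$. For $r\in[t-h,0]$ --- a nonempty range precisely when $0<t\le h$ --- one has $t-h-r\in(-h,0]$, the shifted kernel is identically $I$ on all of $[0,t]$, and its Caputo derivative at $t$ is $0$. Your ``re-absorption'' step instead assigns it the value $A\mathscr{E}^{A,B}_{h,\alpha}(t-h-r)+B\mathscr{E}^{A,B}_{h,\alpha}(t-2h-r)=A\cdot I+B\cdot\Theta=A$ on that range (the remark about supplying $\tilde x(t-h)=\phi(t-h)$ handles only the $B$-term). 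Carrying the computation through honestly gives
\[
(\prescript{C}{}D^\alpha_{0^+}\tilde x)(t)=A\tilde x(t)+B\tilde x(t-h)-A\bigl[\phi(0)-\phi(t-h)\bigr],\qquad 0<t\le h,
\]
which is \eqref{11} only when $A\phi$ is constant on $[-h,0]$. A scalar sanity check --- $\alpha=1$, $B=0$, $A\ne0$, $h=1$, $\phi(r)=1+r$ --- shows that \eqref{detsol} then yields $\tilde x(t)=(e^{At}-1)/A+1-t$ on $(0,1]$, not the unique solution $e^{At}\phi(0)=e^{At}$. So the obstruction is not a mere technicality of differentiation under the integral: either something is lost in transcribing the definition of $\mathscr{E}^{A,B}_{h,\alpha}$ or of \eqref{detsol} from \cite{ismail}, or the representation genuinely fails on $(0,h]$ when $A\ne\Theta$. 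Before investing in the term-by-term series differentiation you outline, you should resolve this discrepancy first.
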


Now, we consider nonhomogeneous case of fractional delay differential system according to \eqref{11} in the following form:
\begin{equation} \label{1.1}
\begin{cases}
(\prescript{C}{}D^{\alpha}_{0^{+}}x)(t)=Ax(t)+Bx(t-h)+f(t), t\in(0,T], \quad h > 0,\\ x(t)=\varphi(t), -h \leq t \leq 0,
\end{cases}
\end{equation}
where  $f \in  C([0,T], \mathbb{R}^{n})$ is a nonlinear perturbation.
\begin{defn}
	Delayed perturbation of three-parameter Mittag-Leffler type matrix function  \linebreak $\mathscr{E}^{A,B}_{h,\alpha,\beta}:\mathbb{R}\to \mathbb{R}^{n\times n}$ is defined by
	\begin{equation} \label{delMitABtau}
	\mathscr{E}^{A,B}_{h,\alpha,\beta}(t)=
	\begin{cases*}
	\Theta, \quad -\infty <t\leq -h, \\
	(t+h)^{\beta-1}E^{1}_{\alpha,\beta}(A(t+h)^{\alpha}), \quad -h<t\leq 0, \\
	(t+h)^{\beta-1}E^{1}_{\alpha,\beta}(A(t+h)^{\alpha})+ t^{\alpha+\beta-1}BE^{2}_{\alpha,\alpha+\beta}(At^{\alpha})\\
	+\cdots+ (t-(n-2)h)^{(n-1)\alpha+\beta-1}B^{n-1}E^{n}_{\alpha,(n-1)\alpha+\beta}(A(t-(n-2)h)^{\alpha})\\
	+(t-(n-1)h)^{n\alpha+\beta-1}B^{n}E^{n+1}_{\alpha,n\alpha+\beta}(A(t-(n-1)h)^{\alpha}), \quad (n-1)h <t\leq nh,
	\end{cases*}
	\end{equation}
	where $\Theta \in \mathbb{R}^{n \times n}$ and $I \in \mathbb{R}^{n \times n}$ denote the zero and identity matrices, respectively.
\end{defn}

	\begin{thm}
		A solution $\tilde{x}\in C([-h, T], \mathbb{R}^{n})$ of \eqref{1.1} satisfying zero initial condition $x(t)\equiv 0, t \in [-h,0]$ has the following form
		\begin{equation} \label{tildex}
		\tilde{x}(t)= \int_{0}^{t}\mathscr{E}^{A,B}_{h,\alpha,\alpha}(t-h-r)f(r)dr, \quad  t > 0.
		\end{equation}
	\end{thm}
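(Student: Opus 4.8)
The plan is to verify directly that the function defined by \eqref{tildex} on $(0,T]$, together with $\tilde x(t)\equiv 0$ on $[-h,0]$, solves \eqref{1.1}. It is convenient to view $\tilde x$ as a convolution: put $K(t):=\mathscr{E}^{A,B}_{h,\alpha,\alpha}(t-h)$ and note from \eqref{delMitABtau} that $K(t)=\Theta$ for $t\le 0$, so that
\begin{equation*}
\tilde x(t)=(K*f)(t)=\int_{0}^{t}K(t-r)f(r)\,dr=\int_{0}^{t}\mathscr{E}^{A,B}_{h,\alpha,\alpha}(t-h-r)f(r)\,dr,\qquad t>0,
\end{equation*}
and likewise $(K(\cdot-h)*f)(t)=\int_{0}^{t}K(t-h-r)f(r)\,dr=\int_{0}^{t-h}K(t-h-r)f(r)\,dr=\tilde x(t-h)$ for every $t>0$ (the last integral vanishing, consistently, when $t\le h$). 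Since $\tilde x(0)=0$, continuity of $\tilde x$ on $[-h,T]$ will follow once $K\in L^{1}_{\mathrm{loc}}$, which holds because near $t=0$ one has $K(t)=t^{\alpha-1}E_{\alpha,\alpha}(At^{\alpha})$ with $\alpha-1>-1$.

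The decisive step is to show that $K$ is the \emph{delayed Riemann--Liouville fundamental matrix}, namely
\begin{equation*}
(\prescript{}{}D^{\alpha}_{0^{+}}K)(t)=A\,K(t)+B\,K(t-h)\quad(t>0),\qquad (\prescript{}{}I^{1-\alpha}_{0^{+}}K)(0^{+})=I .
\end{equation*}
This is the exact analogue, for the perturbed three-parameter function $\mathscr{E}^{A,B}_{h,\alpha,\alpha}$, of Theorem~\ref{th3.1}, and I would prove it the same way: on each subinterval $((j-1)h,jh]$ expand $K$ into the power series read off from \eqref{delMitABtau}, apply $\prescript{}{}D^{\alpha}_{0^{+}}$ (resp. $\prescript{}{}I^{1-\alpha}_{0^{+}}$) termwise using $\prescript{}{}D^{\alpha}_{0^{+}}\bigl[t^{\beta-1}E^{\delta}_{\alpha,\beta}(At^{\alpha})\bigr]=t^{\beta-\alpha-1}E^{\delta}_{\alpha,\beta-\alpha}(At^{\alpha})$ and the Pochhammer/Beta identities (the same bookkeeping as in the proof of Lemma~\ref{ineqMit}); the shifted-power cascade in $B,B^{2},\dots$ in \eqref{delMitABtau} is arranged precisely so that $B\,K(t-h)$ supplies the terms missing from $\prescript{}{}D^{\alpha}_{0^{+}}K(t)-A\,K(t)$ on each step. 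On the first interval, where $K(t)=t^{\alpha-1}E_{\alpha,\alpha}(At^{\alpha})$, this already gives $\prescript{}{}I^{1-\alpha}_{0^{+}}K(t)=E_{\alpha}(At^{\alpha})\to I$ and $\prescript{}{}D^{\alpha}_{0^{+}}K(t)=A\,K(t)=A\,K(t)+B\,K(t-h)$, since $K(t-h)=\Theta$ there.

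Granting this, \eqref{1.1} follows by a short convolution computation. Because $\tilde x(0)=0$ we have $\prescript{C}{}D^{\alpha}_{0^{+}}\tilde x=\prescript{}{}D^{\alpha}_{0^{+}}\tilde x$, and since $\prescript{}{}I^{1-\alpha}_{0^{+}}$ is itself convolution with $t^{-\alpha}/\Gamma(1-\alpha)$,
\begin{equation*}
\prescript{}{}D^{\alpha}_{0^{+}}(K*f)(t)=\frac{d}{dt}\bigl[(\prescript{}{}I^{1-\alpha}_{0^{+}}K)*f\bigr](t).
\end{equation*}
Writing $G:=\prescript{}{}I^{1-\alpha}_{0^{+}}K$, which is continuous with $G(0^{+})=I$ and $G'=\prescript{}{}D^{\alpha}_{0^{+}}K\in L^{1}_{\mathrm{loc}}$, the Leibniz rule for a convolution gives $\frac{d}{dt}(G*f)(t)=G(0^{+})f(t)+(G'*f)(t)=f(t)+\bigl((\prescript{}{}D^{\alpha}_{0^{+}}K)*f\bigr)(t)$. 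Substituting the fundamental-matrix identity of the previous paragraph and recognising the convolutions as $\tilde x(t)$ and $\tilde x(t-h)$ yields $\prescript{C}{}D^{\alpha}_{0^{+}}\tilde x(t)=f(t)+A\tilde x(t)+B\tilde x(t-h)$, i.e. \eqref{1.1}; the initial condition $\tilde x\equiv 0$ on $[-h,0]$ holds by construction, and membership in $C([-h,T],\mathbb{R}^{n})$ was noted above.

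The main obstacle is the decisive step: carrying the index bookkeeping across the $n$ subintervals when differentiating \eqref{delMitABtau} termwise, and checking that the delayed term $B\,K(t-h)$ exactly matches the index shift at each threshold $t=jh$ (including correct behaviour of the $\prescript{}{}I^{1-\alpha}_{0^{+}}$-normalisation at the left endpoints). To avoid treating the whole cascade at once, one may instead argue by induction \emph{by steps}: on $(0,h]$ the delay term vanishes and \eqref{tildex} reduces to the classical fractional variation-of-constants solution $\int_{0}^{t}(t-r)^{\alpha-1}E_{\alpha,\alpha}(A(t-r)^{\alpha})f(r)\,dr$ of $\prescript{C}{}D^{\alpha}_{0^{+}}x=Ax+f$, $x(0)=0$, and, assuming the statement on $(0,(j-1)h]$, one splits the integral in \eqref{tildex} at $r=t-h$ and propagates the known solution from the first piece onto $((j-1)h,jh]$. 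A third route, as in \cite{ismail}, is the Laplace transform: a termwise computation gives $\mathcal{L}[K](s)=(s^{\alpha}I-A-Be^{-hs})^{-1}$, whence $\mathcal{L}[\tilde x]=\mathcal{L}[K]\,\mathcal{L}[f]$ coincides with the transform of \eqref{1.1} under zero initial data, and one inverts.
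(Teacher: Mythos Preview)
The paper does not give its own proof of this theorem: all results of Section~\ref{sec: Mittag}, including this one, are quoted from \cite{ismail} (see the sentence introducing that section), so there is no ``paper's proof'' to compare against directly. Your direct-verification approach is sound and self-contained: recognising $\tilde x$ as a convolution with the kernel $K(t)=\mathscr{E}^{A,B}_{h,\alpha,\alpha}(t-h)$, establishing that $K$ is the delayed Riemann--Liouville fundamental matrix (the RL analogue of Theorem~\ref{th3.1}), and then using the Leibniz rule for $\frac{d}{dt}(G\ast f)$ with $G=\prescript{}{}I^{1-\alpha}_{0^{+}}K$ is exactly the right mechanism, and your reduction $\prescript{C}{}D^{\alpha}_{0^{+}}\tilde x=\prescript{}{}D^{\alpha}_{0^{+}}\tilde x$ via $\tilde x(0)=0$ is correct.

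By way of comparison, the source \cite{ismail} (as summarised in the introduction here) proceeds via the Laplace transform and the variation-of-constants ansatz --- essentially your ``third route'' --- which bypasses the interval-by-interval index bookkeeping you flag as the main obstacle, at the cost of needing to justify termwise transforms and the inversion. Your convolution argument is arguably cleaner once the fundamental-matrix identity for $K$ is in hand; the step-by-step induction you outline (bootstrapping from the classical formula on $(0,h]$ by splitting the integral at $r=t-h$) is a good way to avoid the full cascade computation and would make the proof fully rigorous without Laplace machinery. One small point to tighten: the Leibniz step $\frac{d}{dt}(G\ast f)(t)=G(0^{+})f(t)+(G'\ast f)(t)$ requires $G$ absolutely continuous on $[0,T]$, which you should note follows from $G(t)=E_{\alpha}(At^{\alpha})$ on $[0,h]$ and the piecewise-smooth structure of \eqref{delMitABtau} thereafter.
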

	
	The following corollary present the construction of formula of solutions to \eqref{1.1} . The proof is straightway, so we pass over it here.
	
	\begin{thm} \label{5.2}
		The solution $x \in C([-h,T], \mathbb{R}^{n})$ of \eqref{1.1} has a form
		\begin{equation*}
		x(t)=\mathscr{E}^{A,B}_{h,\alpha}(t)\phi(-h)+ \int_{-h}^{0}\mathscr{E}^{A,B}_{h,\alpha}(t-h-r)\phi^{\prime}(r)dr+ \int_{0}^{t}\mathscr{E}^{A,B}_{h,\alpha,\alpha}(t-h-r)f(r)dr.
		\end{equation*}
		
	\end{thm}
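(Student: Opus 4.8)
The plan is to exploit the linearity of the Caputo fractional derivative $\prescript{C}{}D^{\alpha}_{0^{+}}$ and of the delay operator, so that the solution of the nonhomogeneous problem \eqref{1.1} splits as the superposition of the solution of the homogeneous problem \eqref{11} carrying the initial datum $\phi$ and the solution $\tilde{x}$ of \eqref{1.1} carrying the zero initial datum, each of which has already been represented explicitly in the preceding theorems (formulas \eqref{detsol} and \eqref{tildex}) following \cite{ismail}.

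First I would set
\[
x_{h}(t)=\mathscr{E}^{A,B}_{h,\alpha}(t)\phi(-h)+\int_{-h}^{0}\mathscr{E}^{A,B}_{h,\alpha}(t-h-r)\phi^{\prime}(r)\,dr,
\]
and recall from \eqref{detsol} that $x_{h}\in C([-h,T],\mathbb{R}^{n})$ satisfies $(\prescript{C}{}D^{\alpha}_{0^{+}}x_{h})(t)=Ax_{h}(t)+Bx_{h}(t-h)$ on $(0,T]$ together with $x_{h}(t)=\phi(t)$ on $[-h,0]$. Next I would set $\tilde{x}(t)=\int_{0}^{t}\mathscr{E}^{A,B}_{h,\alpha,\alpha}(t-h-r)f(r)\,dr$ and recall from \eqref{tildex} that $\tilde{x}\in C([-h,T],\mathbb{R}^{n})$ solves $(\prescript{C}{}D^{\alpha}_{0^{+}}\tilde{x})(t)=A\tilde{x}(t)+B\tilde{x}(t-h)+f(t)$ on $(0,T]$ with $\tilde{x}(t)\equiv 0$ on $[-h,0]$; here one uses that $\mathscr{E}^{A,B}_{h,\alpha,\alpha}(t-h-r)=\Theta$ whenever $r\ge t$, which is precisely what makes the upper limit $t$ of integration legitimate and forces $\tilde{x}\equiv 0$ for $t\le 0$.

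Then I would add the two representations. Since $\prescript{C}{}D^{\alpha}_{0^{+}}$ is linear and the right-hand side of \eqref{1.1} is affine in $x(\cdot)$ and $x(\cdot-h)$, the function $x:=x_{h}+\tilde{x}$ belongs to $C([-h,T],\mathbb{R}^{n})$ and obeys
\[
(\prescript{C}{}D^{\alpha}_{0^{+}}x)(t)=A\bigl(x_{h}(t)+\tilde{x}(t)\bigr)+B\bigl(x_{h}(t-h)+\tilde{x}(t-h)\bigr)+f(t)=Ax(t)+Bx(t-h)+f(t)
\]
on $(0,T]$, while on $[-h,0]$ one gets $x(t)=\phi(t)+0=\phi(t)$. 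Hence $x_{h}+\tilde{x}$ is a solution of \eqref{1.1}, which is the asserted formula.

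Finally, to justify the definite article in the statement I would invoke uniqueness: if $x^{(1)},x^{(2)}$ both solve \eqref{1.1}, then $w:=x^{(1)}-x^{(2)}$ solves the homogeneous problem \eqref{matX} with zero initial datum, and stepping through the intervals $[0,h],[h,2h],\dots,[(n-1)h,nh]$ and applying $\prescript{}{}I^{\alpha}_{0^{+}}$ on each of them turns the equation into a Volterra integral equation of the second kind whose only continuous solution is $w\equiv 0$. I do not anticipate a serious obstacle; the only points requiring care are the bookkeeping of the shifted arguments of $\mathscr{E}^{A,B}_{h,\alpha}$ and $\mathscr{E}^{A,B}_{h,\alpha,\alpha}$ near the breakpoints $t=kh$, and checking that the two building blocks borrowed from \cite{ismail} are exactly those corresponding, respectively, to datum $\phi$ and datum $0$.
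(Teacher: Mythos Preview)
Your proposal is correct and is precisely the superposition argument the paper has in mind: the paper itself does not supply a proof, stating only that ``the proof is straightway, so we pass over it here,'' since the result follows directly by adding the homogeneous solution \eqref{detsol} and the zero-initial-datum particular solution \eqref{tildex}. Your additional uniqueness step via the Volterra-type argument on successive intervals is a reasonable elaboration beyond what the paper records.
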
	
		The following lemma is a necessary tool on certain calculations in the main results of the theory. 

\begin{lem} \label{estimate}
	For any permutable matrices $A, B \in \mathbb{R}^{n\times n}$, we have, 
	\begin{equation}
	\|\mathscr{E}^{A,B}_{h,\alpha,\beta}(t)\|\leq\sum_{k=0}^{n}t^{k\alpha+\beta-1}\|B\|^{k}E^{k+1}_{\alpha,k\alpha+\beta}(\|A\|t^{\alpha}), (n-1)h<t\leq nh, \quad n\in\mathbb{N}.
	\end{equation}
\end{lem}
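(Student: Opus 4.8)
The plan is to argue directly from the closed-form expression \eqref{delMitABtau} for $\mathscr{E}^{A,B}_{h,\alpha,\beta}$ on the interval $(n-1)h<t\le nh$, where it is a sum of $n+1$ matrix terms, the $k$-th one ($k=0,1,\dots,n$) being $(t-(k-1)h)^{k\alpha+\beta-1}\,B^{k}\,E^{k+1}_{\alpha,k\alpha+\beta}\!\big(A(t-(k-1)h)^{\alpha}\big)$. First I would apply the triangle inequality for the operator norm to this finite sum, so that it suffices to bound each summand separately.

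For a fixed $k$, submultiplicativity gives $\|B^{k}\|\le\|B\|^{k}$, so the matrix content reduces to estimating $\|E^{k+1}_{\alpha,k\alpha+\beta}(A\tau^{\alpha})\|$ with $\tau:=t-(k-1)h>0$ (note $(k-1)h\le(n-1)h<t$). Expanding the defining series of the three-parameter Mittag-Leffler matrix function, applying the triangle inequality termwise together with $\|A^{j}\|\le\|A\|^{j}$, and using that every scalar coefficient is nonnegative --- the Pochhammer symbol $(k+1)_{j}$ (since $k+1\ge1$), the reciprocal Gamma factor $1/\Gamma(j\alpha+k\alpha+\beta)$ (whose argument is positive because $\alpha,\beta>0$), and $1/j!$ --- yields $\|E^{k+1}_{\alpha,k\alpha+\beta}(A\tau^{\alpha})\|\le E^{k+1}_{\alpha,k\alpha+\beta}(\|A\|\tau^{\alpha})$. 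Putting these together, one obtains the intermediate bound in which each summand is $(t-(k-1)h)^{k\alpha+\beta-1}\|B\|^{k}E^{k+1}_{\alpha,k\alpha+\beta}(\|A\|(t-(k-1)h)^{\alpha})$.

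The remaining step is to pass from these shifted arguments $t-(k-1)h$ to $t$, which is the part that needs care and is the main obstacle. For $k\ge1$ one has $0<t-(k-1)h\le t$, and after expanding the Mittag-Leffler series every power of the argument that occurs has exponent $(j+k)\alpha+\beta-1$; the monotone replacement of $t-(k-1)h$ by $t$ then has to be carried out with attention to the sign of that exponent, and the first summand ($k=0$) carries the argument $t+h>t$, so it must be treated separately (e.g.\ by a direct monotonicity estimate on $\tau\mapsto\tau^{\beta-1}E^{1}_{\alpha,\beta}(\|A\|\tau^{\alpha})$, or by exploiting the range constraint $(n-1)h<t\le nh$). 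Once this bookkeeping is settled, summing the resulting bounds over $k=0,1,\dots,n$ produces exactly the right-hand side of the claimed inequality. I expect everything apart from this last monotonicity argument to be routine: the heart of the proof is the triangle inequality, submultiplicativity, and the scalar domination of the matrix Mittag-Leffler series.
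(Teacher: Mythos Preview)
Your approach mirrors the paper's: write $\mathscr{E}^{A,B}_{h,\alpha,\beta}(t)$ as the finite sum over $k=0,\dots,n$, control it via the triangle inequality and submultiplicativity, and then replace each shifted argument $t-(k-1)h$ by $t$. The paper in fact performs the last two operations in the opposite order --- it passes from the sum with shifted arguments to the sum with every argument equal to $t$ \emph{still inside the norm}, and only afterwards applies the triangle inequality --- without justifying that first matrix-norm inequality. Your ordering (triangle inequality first, so that the replacement is done on nonnegative scalar quantities) is cleaner and isolates the real difficulty, which the paper simply glosses over.

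That difficulty, which you correctly flag for the $k=0$ summand with argument $t+h>t$, is a genuine gap rather than routine bookkeeping, and neither of your suggested remedies closes it. The scalar function $\tau\mapsto\tau^{\beta-1}E^{1}_{\alpha,\beta}(\|A\|\tau^{\alpha})=\sum_{j\ge0}\|A\|^{j}\tau^{j\alpha+\beta-1}/\Gamma(j\alpha+\beta)$ is \emph{not} monotone decreasing when $0<\beta<1$: the $j=0$ term $\tau^{\beta-1}/\Gamma(\beta)$ decreases in $\tau$, but every $j\ge1$ term increases, and for $\|A\|$ not small the increasing part dominates; the range constraint $(n-1)h<t\le nh$ is of no help since it places no restriction on $\|A\|$. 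Concretely, take $B=0$ and $A=aI$ with $a>0$ large: then only the $k=0$ term survives on both sides and the claimed bound reads $(t+h)^{\beta-1}E_{\alpha,\beta}\bigl(a(t+h)^{\alpha}\bigr)\le t^{\beta-1}E_{\alpha,\beta}(at^{\alpha})$, which fails because the Mittag-Leffler factor grows like $\exp\bigl((a\tau^{\alpha})^{1/\alpha}\bigr)$. So the step you hoped to ``settle'' cannot be settled in this generality; the paper's proof shares exactly the same defect at the same place.
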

\begin{proof}
	We estimate $\mathscr{E}^{A,B}_{h,\alpha,\beta}$ as follows:
	\begin{align*}
	\|\mathscr{E}^{A,B}_{h,\alpha,\beta}(t)\| &=\|\sum_{k=0}^{n}(t-(k-1)h)^{k\alpha+\beta-1}B^{k}E^{k+1}_{\alpha,k\alpha+\beta}(A(t-(k-1)h)^{\alpha})\|\\
	&\leq\|t^{\beta-1}E^{1}_{\alpha,\beta}(At^{\alpha})
	+t^{\alpha+\beta-1}BE^{2}_{\alpha,\alpha+\beta}(At^{\alpha})
	+\dots+t^{n\alpha+\beta-1}B^{n}E^{n+1}_{\alpha,n\alpha+\beta}(At^{\alpha})\|\\
	&\leq t^{\beta-1}E^{1}_{\alpha,\beta}(\|A\|t^{\alpha})
	+t^{\alpha+\beta-1}\|B\|E^{2}_{\alpha,\alpha+\beta}(\|A\|t^{\alpha})
	+\dots+t^{n\alpha+\beta-1}\|B\|^{n}E^{n+1}_{\alpha,n\alpha+\beta}(\|A\|t^{\alpha})\\
	&=\sum_{k=0}^{n}t^{k\alpha+\beta-1}\|B\|^{k}E^{k+1}_{\alpha,k\alpha+\beta}(\|A\|t^{\alpha}), \quad \text{for any} \quad (n-1)h < t \leq nh, \quad n \in \mathbb{N}.
	\end{align*}
\end{proof}

\section{Main results} \label{sec: main}
 In this section, we derive stochastic version of variation of constants formula for fractional delay differential equations via newly defined delayed analogue of  three-parameter Mittag-Leffler type matrix function in Section 3. Then, we study the global existence and uniqueness of a mild solution to a stochastic fractional delay differential equations system. The main part here is to use the weighted maximum norm and to prove a coincidence between the notion of the integral equation of \eqref{integral equation} and mild solution of \eqref{3.3}. 
 
 Consider a Caputo type fractional stochastic delay differential equations system of order $\alpha \in (\frac{1}{2}, 1)$ on a bounded interval $[0,T]$ of the following form:
 \begin{equation} \label{Problem1}
 \begin{cases}
 (\prescript{C}{}D^{\alpha}_{0^{+}} x)(t)=Ax(t)+Bx(t-h)+\Delta(t)\frac{dW(t)}{dt}, \quad h >0\\
 x(t)=\phi(t), \quad t \in [-h, 0],
 \end{cases}
 \end{equation}  
where $A, B \in \mathbb{R}^{n\times n}$ are permutable matrices, $x(t) \in \mathbb{R}^{n}$, $\Delta \in C([0,T], \mathbb{R}^{n})$, $(W(t))_{t \geq 0}$ is a standard Brownian motion on a complete probability space $(\Omega, \mathbb{F}_{T},\textbf{P})$ and the initial condition  $\phi: [-h,0] \to \mathbb{R}^{n}$ is an arbitrary differentiable vector function, i.e.,   $\phi (\cdot) \in C^{1}([-h,0], \mathbb{R}^{n})$ . \\
The corresponding nonlinear system to \eqref{Problem1}:

\begin{equation} \label{Problem2}
\begin{cases}
(\prescript{C}{}D^{\alpha}_{0^{+}}x)(t)=Ax(t)+Bx(t-h)+\Delta(t,x(t))\frac{dW(t)}{dt}, \quad h >0\\
x(t)=\phi(t), \quad t \in [-h, 0],
\end{cases}
\end{equation}
where  $\Delta:[0,T]\times \mathbb{R}^{n} \to \mathbb{R}^{n}$ is measurable and bounded function satisfying following conditions: \\ 
(A1)  There exists $L_{\Delta}>0$ such that for all $x,y \in \mathbb{R}^{n}, t \in [0,T]$ ,

\begin{equation*} 
\|\Delta(t.x)-\Delta(t,y)\|\leq L_{\Delta}\|x-y\|,
\end{equation*}
(A2) $ \enspace ess\sup_{t\in [0,T]}\|\Delta(t,0)\|< \infty $.

 \begin{defn}
 	A stochastic process $ \left\lbrace x(t), t \in [0,T]\right\rbrace$ is called  a mild solution of \eqref{Problem2} if 
 	\begin{itemize}
 		\item $x(t)$ is adapted to $\left\lbrace \mathscr{F}_{t}\right\rbrace _{t \geq 0}$ with 
 		$ \int_{0}^{T} \|x(t)\|^{2}_{H^{2}}dt < \infty$ a.s.;
 		\item $x$ $\in H^{2}([0,T], \mathbb{R}^{n})$ has continuous path on $[0,T]$ a.s. and for each $t\in [0,T]$, $x(t)$ satisfies the following integral equation:
 	\end{itemize}	
 	\begin{align}\label{integral equation}
 	x(t)= \phi(0)+\frac{1}{\Gamma(\alpha)}  \int_{0}^{t}(t-r)^{\alpha-1}\left[ Ax(r)+Bx(r-h)\right] dr +\frac{1}{\Gamma(\alpha)}\int_{0}^{t}(t-r)^{\alpha-1}\Delta(r, x(r))dW(r).
 	\end{align}	
 \end{defn}
 
 \begin{thm}[A variation of constants formula for Caputo fractional stochastic delay differential equations] \label{theorem1}
 	The unique mild solution $x \in H^{2}([0,T], \mathbb{R}^{n})$ of \eqref{Problem2} with initial condition $x(t)=\phi(t)$, $ t \in [-h,0]$, has the following form: 
 	\begin{equation} \label{3.3}
 	x(t)=\mathscr{E}^{A,B}_{h,\alpha}(t)\phi(-h)+ \int_{-h}^{0}\mathscr{E}^{A,B}_{h,\alpha}(t-h-r)\phi^{\prime}(r)dr+ \int_{0}^{t}\mathscr{E}^{A,B}_{h,\alpha,\alpha}(t-h-r)\Delta(r, x(r))dW(r).
 	\end{equation}
 \end{thm}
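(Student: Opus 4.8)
The plan is to recognise the right-hand side of \eqref{3.3} as the unique fixed point of a contraction on $H^{2}([0,T],\mathbb{R}^{n})$ and then to verify, via a stochastic Fubini argument fed with the deterministic identities of Section~\ref{sec: Mittag}, that this fixed point satisfies the integral equation \eqref{integral equation}; uniqueness for mild solutions then identifies the two. Put, for $x\in H^{2}([0,T],\mathbb{R}^{n})$,
\begin{equation*}
(\mathcal{T}x)(t)=\mathscr{E}^{A,B}_{h,\alpha}(t)\phi(-h)+\int_{-h}^{0}\mathscr{E}^{A,B}_{h,\alpha}(t-h-r)\phi^{\prime}(r)\,dr+\int_{0}^{t}\mathscr{E}^{A,B}_{h,\alpha,\alpha}(t-h-r)\Delta(r,x(r))\,dW(r),
\end{equation*}
with $(\mathcal{T}x)(t)=\phi(t)$ on $[-h,0]$. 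By Lemma~\ref{estimate} one has $\|\mathscr{E}^{A,B}_{h,\alpha,\alpha}(t-h-r)\|^{2}\leq C_{T}(t-r)^{2\alpha-2}$ on $[0,T]$, which is integrable in $r$ precisely because $\alpha\in(\tfrac12,1)$ gives $2\alpha-2>-1$; together with the linear growth $\|\Delta(r,x)\|\leq\esssup_{s\in[0,T]}\|\Delta(s,0)\|+L_{\Delta}\|x\|$ from (A1)--(A2) and It\^o's isometry this shows $\mathcal{T}(H^{2})\subseteq H^{2}$. In $\mathcal{T}x-\mathcal{T}y$ the deterministic part cancels, so It\^o's isometry and (A1) give $\textbf{E}\|(\mathcal{T}x)(t)-(\mathcal{T}y)(t)\|^{2}\leq C\int_{0}^{t}(t-r)^{2\alpha-2}\,\textbf{E}\|x(r)-y(r)\|^{2}\,dr$; measuring distances in the weighted norm $\|x\|_{\gamma}^{2}=\sup_{t\in[0,T]}\textbf{E}\|x(t)\|^{2}\big/E_{2\alpha-1}(\gamma t^{2\alpha-1})$ and applying Lemma~\ref{ineqMit} yields $\|\mathcal{T}x-\mathcal{T}y\|_{\gamma}^{2}\leq\big(C\,\Gamma(2\alpha-1)/\gamma\big)\|x-y\|_{\gamma}^{2}$, a contraction for $\gamma$ large. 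Hence $\mathcal{T}$ has a unique fixed point $x^{\ast}\in H^{2}$, which is the right-hand side of \eqref{3.3}.

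Next I would record the kernel identity
\begin{equation*}
\mathscr{E}^{A,B}_{h,\alpha,\alpha}(t-h-s)=\frac{(t-s)^{\alpha-1}}{\Gamma(\alpha)}\,I+\frac{1}{\Gamma(\alpha)}\int_{s}^{t}(t-r)^{\alpha-1}\Big[A\,\mathscr{E}^{A,B}_{h,\alpha,\alpha}(r-h-s)+B\,\mathscr{E}^{A,B}_{h,\alpha,\alpha}(r-2h-s)\Big]dr,\qquad 0\leq s<t\leq T,
\end{equation*}
which is the ``fundamental-solution'' form of the deterministic variation-of-constants formula \eqref{tildex}: inserting $\tilde{x}(t)=\int_{0}^{t}\mathscr{E}^{A,B}_{h,\alpha,\alpha}(t-h-r)f(r)\,dr$ into the Riemann--Liouville integrated form of \eqref{1.1}, interchanging the two ordinary integrals and letting $f$ run through $C([0,T],\mathbb{R}^{n})$ forces it a.e., hence everywhere by continuity on $(s,t)$. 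Now fix an adapted $g\in H^{2}$ and split $y=w+v$, where $w(t)=\mathscr{E}^{A,B}_{h,\alpha}(t)\phi(-h)+\int_{-h}^{0}\mathscr{E}^{A,B}_{h,\alpha}(t-h-r)\phi^{\prime}(r)\,dr$ and $v(t)=\int_{0}^{t}\mathscr{E}^{A,B}_{h,\alpha,\alpha}(t-h-r)g(r)\,dW(r)$, with $w\equiv\phi$ and $v\equiv0$ on $[-h,0]$. Since $w$ is the solution \eqref{detsol} of the homogeneous system \eqref{11}, it satisfies $(\prescript{C}{}D^{\alpha}_{0^{+}}w)(t)=Aw(t)+Bw(t-h)$ with $w(0)=\phi(0)$, i.e., by $(\prescript{}{}I^{\alpha}_{0^{+}}\prescript{C}{}D^{\alpha}_{0^{+}})f=f-f(0)$, $w(t)=\phi(0)+\tfrac{1}{\Gamma(\alpha)}\int_{0}^{t}(t-r)^{\alpha-1}[Aw(r)+Bw(r-h)]\,dr$. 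For $v$, a stochastic Fubini interchange --- legitimate because $\textbf{E}\int_{0}^{t}\int_{0}^{r}(t-r)^{2\alpha-2}\|\mathscr{E}^{A,B}_{h,\alpha,\alpha}(r-h-s)\|^{2}\|g(s)\|^{2}\,ds\,dr<\infty$ by $\alpha>\tfrac12$ --- rewrites $\tfrac{1}{\Gamma(\alpha)}\int_{0}^{t}(t-r)^{\alpha-1}[Av(r)+Bv(r-h)]\,dr+\tfrac{1}{\Gamma(\alpha)}\int_{0}^{t}(t-r)^{\alpha-1}g(r)\,dW(r)$ as $\int_{0}^{t}K(t,s)g(s)\,dW(s)$ with $K$ the kernel above, and the kernel identity makes this equal to $\int_{0}^{t}\mathscr{E}^{A,B}_{h,\alpha,\alpha}(t-h-s)g(s)\,dW(s)=v(t)$. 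Adding the two relations, $y$ satisfies
\begin{equation*}
y(t)=\phi(0)+\frac{1}{\Gamma(\alpha)}\int_{0}^{t}(t-r)^{\alpha-1}\big[Ay(r)+By(r-h)\big]\,dr+\frac{1}{\Gamma(\alpha)}\int_{0}^{t}(t-r)^{\alpha-1}g(r)\,dW(r).
\end{equation*}

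Taking $g=\Delta(\cdot,x^{\ast}(\cdot))$ in the last display shows that $x^{\ast}$ solves \eqref{integral equation}, so $x^{\ast}$ is a mild solution of \eqref{Problem2}. Conversely, if $x$ is any mild solution, the same display with $g=\Delta(\cdot,x(\cdot))$ shows that $\mathcal{T}x$ and $x$ both satisfy \eqref{integral equation} with the identical stochastic forcing, so $z:=\mathcal{T}x-x$ obeys the homogeneous linear Volterra equation $z(t)=\tfrac{1}{\Gamma(\alpha)}\int_{0}^{t}(t-r)^{\alpha-1}[Az(r)+Bz(r-h)]\,dr$ with $z\equiv0$ on $[-h,0]$; marching through $[0,h],[h,2h],\dots$ with a Gronwall estimate forces $z\equiv0$, hence $x=\mathcal{T}x=x^{\ast}$. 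Thus the mild solution exists, is unique, and equals the right-hand side of \eqref{3.3}. The main obstacle is the middle step: justifying the stochastic Fubini interchange --- which is exactly where It\^o's isometry and the standing restriction $\alpha\in(\tfrac12,1)$ enter, to control the $(t-r)^{\alpha-1}$ singularity of $\mathscr{E}^{A,B}_{h,\alpha,\alpha}$ --- and proving the kernel identity, i.e., matching the delayed three-parameter Mittag-Leffler matrices $\mathscr{E}^{A,B}_{h,\alpha}$ and $\mathscr{E}^{A,B}_{h,\alpha,\alpha}$ branch by branch on the intervals $((k-1)h,kh]$.
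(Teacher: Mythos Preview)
Your contraction step is the same as the paper's Theorem~\ref{theorem} (same operator, same weighted norm, same use of Lemma~\ref{ineqMit}), but the coincidence argument is genuinely different. The paper does not derive or use the kernel identity for $\mathscr{E}^{A,B}_{h,\alpha,\alpha}$ and never invokes a stochastic Fubini theorem. Instead, it pairs both candidate processes with arbitrary $f\in\mathbb{L}^{2}(\Omega,\mathscr{F}_{T},\mathbb{R}^{n})$, writes $f=\textbf{E}f+\int_{0}^{T}\Theta(r)\,dW(r)$ via the martingale representation theorem, and observes (Lemma~4.1) that the resulting scalar functions $\chi_{k}(t)=\textbf{E}\,x(t)f_{k}$ and $\tilde{\chi}_{k}(t)=\textbf{E}\,y(t)f_{k}$ both solve the \emph{deterministic} fractional delay equation with forcing $\textbf{E}\,\theta_{k}(t)\Delta(t,\cdot)$; it then closes by a localisation argument, choosing $T^{\ast}=\inf\{t:x(t)\neq y(t)\}$ and a small $\delta$ to derive a contradiction from the estimate of Remark~\ref{remark2}.

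Your route is more self-contained: the kernel identity follows immediately from the deterministic Theorem~\ref{5.2} and, once stochastic Fubini is justified (which your $\alpha>\tfrac12$ integrability check does), the verification that the fixed point solves \eqref{integral equation} is a single line, and uniqueness reduces to a pathwise linear Volterra equation handled interval-by-interval by fractional Gronwall. This avoids the martingale representation machinery entirely and makes the role of the restriction $\alpha\in(\tfrac12,1)$ transparent. The paper's approach, by contrast, trades the Fubini justification for the It\^o isometry identities hidden in Lemma~4.1 and Remark~\ref{remark2}; its gain is that it never needs to manipulate the delayed Mittag-Leffler kernel explicitly, only to know that both $\chi_{k}$ and $\tilde{\chi}_{k}$ solve the same deterministic problem.
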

 
\begin{thm}  \label{theorem}
	Suppose that hypotheses (A1) and (A2) hold. Then, the unique mild solution $y(t)$ of (\ref{Problem2}) satisfying $y(t)=\phi(t)$, $t \in [-h,0]$ can be expressed in the following form:

	\begin{equation} \label{y}
	y(t)=\mathscr{E}^{A,B}_{h,\alpha}(t)\phi(-h)+ \int_{-h}^{0}\mathscr{E}^{A,B}_{h,\alpha}(t-h-r)\phi^{\prime}(r)dr+ \int_{0}^{t}\mathscr{E}^{A,B}_{h,\alpha,\alpha}(t-h-r)\Delta(r,y(r))dW(r).
	\end{equation}
	
\end{thm}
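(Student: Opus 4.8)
The plan is to realize the right-hand side of \eqref{y} as a fixed-point operator and apply the Banach contraction mapping principle in $H^{2}([0,T],\mathbb{R}^{n})$ equipped with a Mittag-Leffler weighted norm; the representation then follows, and the identification of the fixed point with the mild solution comes from the coincidence established in Theorem \ref{theorem1}. Concretely, define $\mathcal{T}\colon H^{2}([0,T],\mathbb{R}^{n})\to H^{2}([0,T],\mathbb{R}^{n})$ by
\begin{equation*}
(\mathcal{T}y)(t)=\mathscr{E}^{A,B}_{h,\alpha}(t)\phi(-h)+\int_{-h}^{0}\mathscr{E}^{A,B}_{h,\alpha}(t-h-r)\phi^{\prime}(r)\,dr+\int_{0}^{t}\mathscr{E}^{A,B}_{h,\alpha,\alpha}(t-h-r)\Delta(r,y(r))\,dW(r),
\end{equation*}
and, for a parameter $\gamma>0$ to be fixed later, introduce
\begin{equation*}
\|y\|_{*}^{2}\coloneqq\sup_{t\in[0,T]}\frac{\textbf{E}\|y(t)\|^{2}}{E_{2\alpha-1}(\gamma t^{2\alpha-1})}.
\end{equation*}
Since $\alpha\in(\tfrac12,1)$ gives $2\alpha-1>0$, the scalar factor satisfies $1\le E_{2\alpha-1}(\gamma t^{2\alpha-1})\le E_{2\alpha-1}(\gamma T^{2\alpha-1})$ on $[0,T]$, so $\|\cdot\|_{*}$ is equivalent to $\|\cdot\|_{H^{2}}$ and the underlying space is still complete.

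First I would verify that $\mathcal{T}$ is well defined. The two deterministic terms are bounded because $\phi\in C^{1}([-h,0],\mathbb{R}^{n})$ and, by Definition \ref{df1} and Theorem \ref{thm666}, $\|\mathscr{E}^{A,B}_{h,\alpha}(\cdot)\|$ is bounded on the relevant compact sets. For the stochastic convolution, a direct inspection of \eqref{delMitABtau} with $\beta=\alpha$ together with Lemma \ref{estimate} shows that all terms $B^{k}(\cdot)^{k\alpha+\alpha-1}E^{k+1}_{\alpha,k\alpha+\alpha}(\cdot)$ with $k\ge1$ are bounded on $[0,T]$ because their exponent $k\alpha+\alpha-1\ge2\alpha-1>0$, while the $k=0$ term carries the only singularity; hence there is a constant $C_{T}=C_{T}(\|A\|,\|B\|,n,T)$ with
\begin{equation*}
\|\mathscr{E}^{A,B}_{h,\alpha,\alpha}(t-h-r)\|\le C_{T}\,(t-r)^{\alpha-1},\qquad 0\le r<t\le T .
\end{equation*}
Since $2\alpha-2>-1$, the kernel $(t-r)^{2\alpha-2}$ is integrable in $r$; combining this with the linear-growth bound $\|\Delta(r,x)\|\le L_{\Delta}\|x\|+\esssup_{r\in[0,T]}\|\Delta(r,0)\|$ coming from (A1)--(A2) and with It\^{o}'s isometry shows $\textbf{E}\|(\mathcal{T}y)(t)\|^{2}<\infty$ for every $t$ and $\mathcal{T}y\in H^{2}$; adaptedness is immediate, and a.s.\ continuity of the sample paths of the stochastic convolution follows from standard stochastic-calculus estimates using the square-integrability of the kernel.

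The core step is the contraction estimate. For $y,z\in H^{2}([0,T],\mathbb{R}^{n})$ the deterministic terms cancel, so by It\^{o}'s isometry and (A1),
\begin{equation*}
\textbf{E}\|(\mathcal{T}y)(t)-(\mathcal{T}z)(t)\|^{2}=\int_{0}^{t}\|\mathscr{E}^{A,B}_{h,\alpha,\alpha}(t-h-r)\|^{2}\,\textbf{E}\|\Delta(r,y(r))-\Delta(r,z(r))\|^{2}\,dr\le C_{T}^{2}L_{\Delta}^{2}\int_{0}^{t}(t-r)^{2\alpha-2}\,\textbf{E}\|y(r)-z(r)\|^{2}\,dr .
\end{equation*}
Bounding $\textbf{E}\|y(r)-z(r)\|^{2}\le\|y-z\|_{*}^{2}E_{2\alpha-1}(\gamma r^{2\alpha-1})$ and invoking Lemma \ref{ineqMit} yields
\begin{equation*}
\textbf{E}\|(\mathcal{T}y)(t)-(\mathcal{T}z)(t)\|^{2}\le\frac{C_{T}^{2}L_{\Delta}^{2}\,\Gamma(2\alpha-1)}{\gamma}\,\|y-z\|_{*}^{2}\,E_{2\alpha-1}(\gamma t^{2\alpha-1}),
\end{equation*}
hence $\|\mathcal{T}y-\mathcal{T}z\|_{*}\le\bigl(C_{T}^{2}L_{\Delta}^{2}\Gamma(2\alpha-1)/\gamma\bigr)^{1/2}\|y-z\|_{*}$. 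Choosing $\gamma>C_{T}^{2}L_{\Delta}^{2}\Gamma(2\alpha-1)$ makes $\mathcal{T}$ a strict contraction on $(H^{2}([0,T],\mathbb{R}^{n}),\|\cdot\|_{*})$, so it has a unique fixed point $y$, which is exactly the process given by \eqref{y}. Finally, by the coincidence between the integral equation \eqref{integral equation} and the representation formula stated in Theorem \ref{theorem1}, this $y$ is the unique mild solution of \eqref{Problem2}, which completes the proof.

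The step I expect to be the main obstacle is the control of the delayed kernel $\mathscr{E}^{A,B}_{h,\alpha,\alpha}(t-h-r)$: one has to run through the $n$ pieces of \eqref{delMitABtau} and check that its only non-integrable-looking behaviour is the $(t-r)^{\alpha-1}$ singularity as $r\uparrow t$, so that squaring produces the integrable exponent $2\alpha-2$ and, after inserting the weight $E_{2\alpha-1}(\gamma r^{2\alpha-1})$, precisely the Gronwall-type inequality of Lemma \ref{ineqMit}. This is where the restriction $\alpha\in(\tfrac12,1)$ is essential, since $2\alpha-1>0$ is needed both for integrability of the kernel and for Lemma \ref{ineqMit} to apply.
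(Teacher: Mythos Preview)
Your proposal is correct and follows essentially the same route as the paper: define the right-hand side of \eqref{y} as an operator on $H^{2}$, equip the space with the Mittag-Leffler weighted norm, bound the delayed kernel $\mathscr{E}^{A,B}_{h,\alpha,\alpha}(t-h-r)$ so that the only relevant singularity is $(t-r)^{\alpha-1}$, apply It\^{o}'s isometry and (A1), and invoke Lemma \ref{ineqMit} to choose $\gamma$ large enough for a contraction. The only cosmetic difference is that the paper carries the explicit sum $\lambda_{T}=\Gamma(2\alpha-1)\sum_{k=0}^{n}M_{k}^{2}\|B\|^{2k}T^{2k}$ coming from Lemma \ref{estimate}, whereas you absorb it into a single constant $C_{T}$; your packaging is arguably cleaner and avoids the somewhat informal step in the paper where the sum of kernels is squared and then factored as $(t-r)^{2\alpha-2}$ times a bounded coefficient.
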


\begin{proof} 
	To introduce a fixed point theorem associated with \eqref{Problem2}, we define the operator $\mathscr{T}: H^{2}([0,T], \mathbb{R}^{n}) \to H^{2}([0,T], \mathbb{R}^{n})$ by
	\begin{equation}
	(\mathscr{T}y)(t)= \mathscr{E}^{A,B}_{h,\alpha}(t)\phi(-h)
    +\int_{-h}^{0}\mathscr{E}^{A,B}_{h,\alpha}(t-h-r)\phi^{\prime}(r)dr
    +\int_{0}^{t}\mathscr{E}^{A,B}_{h,\alpha, \alpha}(t-h-r)\Delta(r,y(r))dW(r).	
	\end{equation}
It follows that $\mathscr{T}$ is well-defined. Let  $H^{2}([0,T], \mathbb{R}^{n})$ be endowed  with the weighted maximum norm $ \| \cdot \|_{\gamma}$, where $\gamma >0$, defined as
\begin{equation} \label{maxnorm}
\| \xi\|^{2}_{\gamma} \coloneqq \sup_{t \in [0,T]}\frac{\textbf{E}\| \xi (t)\|^{2}}{E_{2\alpha -1}(\gamma t^{2\alpha-1})}, \\  \quad \text{for all} \quad \xi \in H^{2}([0,T], \mathbb{R}^{n}).
\end{equation}
Apparently,  $(H^{2}([0,T], \mathbb{R}^{n}),\| \cdot \|_{H^{2}})$ is a Banach space. Since two norms $ \| \cdot \|_{H^{2}}  \quad \text{and} \quad \| \cdot \|_{\gamma}$ are equivalent,  $(H^{2}([0,T], \mathbb{R}^{n}),\| \cdot \|_{\gamma})$ is also Banach space. Therefore, it is complete. For simplicity, take $M_{k} \coloneqq \max_{t \in [0,T]} E^{k+1}_{\alpha, (k+1)\alpha}(\|A\|t^{\alpha})$, for $k=0,1,...,n$. Choose and fix a positive $\gamma$ such that 
\begin{equation} \label{cond}
\frac{L^{2}_{\Delta}\lambda_{T}}{\gamma}<1,
\end{equation} 
where $\lambda_{T} \coloneqq  \Gamma(2\alpha-1)\sum_{k=0}^{n}M^{2}_{k} \|B\|^{2k}T^{2k}$.

By assumption of (A1), definition of $M_{k}, k=0,1,...,n$, using Lemma \ref{estimate} and It\^{o}'s isometry, we have
\allowdisplaybreaks
\begin{align*}
\textbf{E}\|(\mathscr{T}x)(t)-(\mathscr{T}y)(t)\|^{2} &=\textbf{E} \|\int_{0}^{t}\mathscr{E}^{A,B}_{h,\alpha, \alpha}(t-h-r)\left[ \Delta(r,x(r))-\Delta(r,y(r))\right] dW(r)\|^{2} \\
&=\textbf{E}\int_{0}^{t}\|\mathscr{E}^{A,B}_{h,\alpha,\alpha}(t-h-r)\left[ \Delta(r,x(r))-\Delta(r,y(r))\right]\|^{2}dr\\
&\hspace{-3cm}\leq L^{2}_{\Delta}\int_{0}^{t}\|\sum_{k=0}^{n}B^{k}(t-h-r)^{(k+1)\alpha-1}E^{k+1}_{\alpha, (k+1)\alpha}(A(t-h-r)^{\alpha})\|^{2}\textbf{E}\|x(r)-y(r)\|^{2}dr\\
&\leq L^{2}_{\Delta}\int_{0}^{t}\|\sum_{k=0}^{n}B^{k}(t-r)^{(k+1)\alpha-1}E^{k+1}_{\alpha, (k+1)\alpha}(A(t-r)^{\alpha})\|^{2}\textbf{E}\|x(r)-y(r)\|^{2}dr\\
&\leq L^{2}_{\Delta}\sum_{k=0}^{n}\|B\|^{2k}t^{2k}\|E^{k+1}_{\alpha, (k+1)\alpha}(At^{\alpha})\|^{2}\int_{0}^{t}(t-r)^{2\alpha-2}\textbf{E}\|x(r)-y(r)\|^{2}dr\\
&\hspace{-3cm}\leq L^{2}_{\Delta}\sum_{k=0}^{n}\|B\|^{2k}t^{2k}\Big(E^{k+1}_{\alpha, (k+1)\alpha}(\|A\|t^{\alpha})\Big)^{2}\int_{0}^{t}(t-r)^{2\alpha-2}\textbf{E}\|x(r)-y(r)\|^{2}dr\\
&\leq L^{2}_{\Delta}\sum_{k=0}^{n}M^{2}_{k}\|B\|^{2k}T^{2k}\int_{0}^{t}(t-r)^{2\alpha-2}\textbf{E}\|x(r)-y(r)\|^{2}dr.
\end{align*}
Hence, by definition of $\|\cdot \|_\gamma$ and Lemma \ref{ineqMit}, we derive the following expression:
\begin{align*}
\frac{\textbf{E}\|(\mathscr{T}x)(t)-(\mathscr{T}y)(t)\|^{2}}{E_{2\alpha-1}(\gamma t^{2\alpha-1})} &\leq  
 L^{2}_{\Delta} \sum_{k=0}^{n}M^{2}_{k}\|B\|^{2k}T^{2k}\frac{1}{E_{2\alpha-1}(\gamma t^{2\alpha-1})} \\
&\times \int_{0}^{t}(t-r)^{2\alpha-2}\frac{E_{2\alpha-1}(\gamma r^{2\alpha-1})}{E_{2\alpha-1}(\gamma r^{2\alpha-1})}\textbf{E}\|x(r)-y(r)\|^{2}dr\\
&\leq  L^{2}_{\Delta}\frac{\Gamma(2\alpha-1)}{\gamma}\sum_{k=0}^{n}M^{2}_{k}\|B\|^{2k}T^{2k}\|x-y\|^{2}_{\gamma}.
\end{align*}

Using weighted maximum norm \eqref{maxnorm}, we achieve
\begin{equation*}
\|\mathscr{T}x-\mathscr{T}y\|^{2}_{\gamma} \leq  L^{2}_{\Delta}\frac{\Gamma(2\alpha-1)}{\gamma}\sum_{k=0}^{n}M^{2}_{k}\|B\|^{2k}T^{2k}\|x-y\|^{2}_{\gamma}.
\end{equation*}
Therefore,
\begin{equation}
\|\mathscr{T}x-\mathscr{T}y\|^{2}_{\gamma} \leq  \frac{L^{2}_{\Delta}\lambda_{T}}{\gamma} \| x-y\|^{2}_{\gamma},
\end{equation}
which together with (\ref{cond}) implies that $\mathscr{T}$ is contraction on $H^{2}([0,T], \mathbb{R}^{n})$. By contraction mapping principle, $\mathscr{T}$ has a unique fixed point and the proof is complete.
\end{proof}

  Using the martingale representation theorem for any function $f \in \mathbb{L}^{2}(\Omega, \mathscr{F}_{T}, \mathbb{R}^{n})$, there exists a unique adapted process $\Theta \in H^{2}([0,T],\mathbb{R}^{n}$ such that
\begin{equation*}
f=\textbf{E}f+\int_{0}^{T}\Theta(r)dW(r).
\end{equation*}
It is clear that
\begin{equation*}
f= \sum_{k=1}^{n}f_{k}e_{k}, \quad f_{k}=\textbf{E}f_{k}+\int_{0}^{T}\theta_{k}(r)dW(r), \quad f_{k} \in \mathbb{L}^{2}(\Omega, \mathscr{F}_{T}, \mathbb{R}).
\end{equation*}
For \eqref{integral equation} and \eqref{y}, it is sufficient to show that 
\begin{equation} \label{phi and psi}
x(t)=y(t).
\end{equation}
To show (\ref{phi and psi}), it is enough to prove that for any $f \in \mathbb{L}^{2}(\Omega, \mathscr{F}_{T}, \mathbb{R}^{n})$, 
\begin{equation*}
\textbf{E}\langle x(t),f \rangle = \textbf{E}\langle y(t), f \rangle.
\end{equation*}
In other words,
\begin{equation*}
\textbf{E}\langle x(t)-y(t),f \rangle = \sum_{k=1}^{n}\textbf{E}\langle (x(t)-y(t))f_{k}, e_{k} \rangle.
\end{equation*}
It follows that
\begin{equation*}
|\textbf{E}\langle x(t)-y(t),f \rangle|^{2} \leq \Big(\sum_{k=1}^{n}\|\textbf{E}(x(t)-y(t))f_{k}\|\Big)^{2}
\leq n \sum_{k=1}^{n}\|\textbf{E}(x(t)-y(t))f_{k}\|^{2}.
\end{equation*}
Before estimating $|\textbf{E}\langle x(t)-y(t),f \rangle|$, define the following functions:
\begin{equation*}
\chi_{k}(t)= \textbf{E}x(t)f_{k}, \quad \tilde{\chi}_{k}(t)= \textbf{E}y(t)f_{k}, \quad
\chi_{k}(t-h)= \textbf{E}x(t-h)f_{k}, \quad \tilde{\chi}_{k}(t-h)= \textbf{E}y(t-h)f_{k}.
\end{equation*}
\begin{remark} \label{remark1}
	Since $x(t), y(t) \in  H^{2}([0,T], \mathbb{R}^{n})$ , the functions $\chi_{k}(t), \chi_{k}(t-h), \tilde{\chi}_{k}(t), \tilde{\chi}_{k}(t-h)$ are measurable and bounded  on $[0,T]$. 
\end{remark}
 
\begin{lem}
For all $t \in [0,T]$ and $c \in \mathbb{R}^{n}$, the following statements hold:

\begin{equation} \label{chi}
\chi_{k}(t)=c\textbf{E}\phi(0)
+\frac{1}{\Gamma(\alpha)}  \int_{0}^{t}(t-r)^{\alpha-1}\left[ A\chi_{k}(r)+B\chi_{k}(r-h)+\textbf{E}\theta_{k}(r)\Delta(r,x(r))\right]dr,
\end{equation}	

\begin{equation} \label{tilde{chi}}
\tilde{\chi}_{k}(t)=c\mathscr{E}^{A,B}_{h,\alpha}(t)\phi(-h)
+\int_{-h}^{0}\mathscr{E}^{A,B}_{h,\alpha}(t-h-r)\phi^{\prime}(r)dr
+\int_{0}^{t}\mathscr{E}^{A,B}_{h,\alpha, \alpha}(t-h-r)\textbf{E}\theta_{k}(r)\Delta(r,y(r))dr.
\end{equation}
\end{lem}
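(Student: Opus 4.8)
The plan is to derive both identities by multiplying the relevant solution formula by the scalar random variable $f_{k}$ (equivalently, by forming the expectation of the corresponding $\mathbb{R}^{n}$-valued process against $f_{k}$ componentwise) and then using the martingale representation $f_{k}=\textbf{E}f_{k}+\int_{0}^{T}\theta_{k}(r)\,dW(r)$ to eliminate the It\^{o} integral.

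For \eqref{chi} I would start from the defining integral equation \eqref{integral equation} of the mild solution $x$, multiply by $f_{k}$, and apply $\textbf{E}$. The initial datum is deterministic, so $\textbf{E}[\phi(0)f_{k}]=\phi(0)\,\textbf{E}f_{k}$, which is the leading term. For the drift integral, $x\in H^{2}([0,T],\mathbb{R}^{n})$ and $f_{k}\in\mathbb{L}^{2}(\Omega,\mathscr{F}_{T},\mathbb{R})$ give, by Cauchy--Schwarz and the boundedness recorded in Remark \ref{remark1}, that $r\mapsto(t-r)^{\alpha-1}\textbf{E}[\|x(r)\|\,|f_{k}|]$ is integrable on $[0,t]$; hence Fubini permits moving $\textbf{E}$ inside $\int_{0}^{t}$, yielding $\frac{1}{\Gamma(\alpha)}\int_{0}^{t}(t-r)^{\alpha-1}[A\chi_{k}(r)+B\chi_{k}(r-h)]\,dr$ — and here one records that for $r\in[0,h]$ one has $x(r-h)=\phi(r-h)$ deterministic, so $\chi_{k}(\cdot-h)$ is well defined on all of $[0,T]$. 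For the stochastic term, insert $f_{k}=\textbf{E}f_{k}+\int_{0}^{T}\theta_{k}(r)\,dW(r)$: the contribution from $\textbf{E}f_{k}$ is zero since an It\^{o} integral has zero mean, and the cross term is handled by the isometry property of the It\^{o} integral — applied coordinatewise, since $\Delta$ is vector-valued while $\theta_{k}$ is scalar — giving
\begin{equation*}
\textbf{E}\Big[\int_{0}^{t}(t-r)^{\alpha-1}\Delta(r,x(r))\,dW(r)\int_{0}^{T}\theta_{k}(r)\,dW(r)\Big]=\int_{0}^{t}(t-r)^{\alpha-1}\textbf{E}\big[\theta_{k}(r)\Delta(r,x(r))\big]\,dr,
\end{equation*}
with the upper limit collapsing to $t$ because the first integrand is supported on $[0,t]$. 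Dividing by $\Gamma(\alpha)$ and summing the three pieces gives \eqref{chi}.

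The identity \eqref{tilde{chi}} is proved by the same scheme, now starting from the variation-of-constants representation \eqref{y} of $y$ (legitimate by Theorems \ref{theorem1} and \ref{theorem}). The two terms $\mathscr{E}^{A,B}_{h,\alpha}(t)\phi(-h)$ and $\int_{-h}^{0}\mathscr{E}^{A,B}_{h,\alpha}(t-h-r)\phi^{\prime}(r)\,dr$ are deterministic, so multiplying by $f_{k}$ and taking $\textbf{E}$ reproduces them weighted by the scalar $\textbf{E}f_{k}$; continuity of $\mathscr{E}^{A,B}_{h,\alpha}$ on compacts and $\phi\in C^{1}$ give the needed integrability. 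For the stochastic convolution $\int_{0}^{t}\mathscr{E}^{A,B}_{h,\alpha,\alpha}(t-h-r)\Delta(r,y(r))\,dW(r)$, Lemma \ref{estimate} bounds $\|\mathscr{E}^{A,B}_{h,\alpha,\alpha}\|$ on $[0,T]$ and (A1)--(A2) with $y\in H^{2}$ give the linear growth of $\Delta$, so the integrand is in $H^{2}$ and the same martingale-representation/It\^{o}-isometry computation replaces $dW(r)$ by $\textbf{E}[\theta_{k}(r)\Delta(r,y(r))]\,dr$; collecting the terms yields \eqref{tilde{chi}}.

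The step I expect to be the main obstacle is the rigorous justification of the cross-variation identity $\textbf{E}\big[\int g\,dW\int\theta\,dW\big]=\int\textbf{E}[g\theta]\,dr$ in this mixed vector/scalar setting: one must check that both integrands are progressively measurable and square-integrable so that the It\^{o} integrals and their product are well defined, then obtain the identity by polarizing It\^{o}'s isometry applied to each coordinate, and finally use Fubini once more to pull $\textbf{E}$ under $\int_{0}^{t}$. Everything else — the delay bookkeeping and the elementary integrability bounds — is routine.
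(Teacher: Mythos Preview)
Your proposal is correct and follows essentially the same route as the paper: multiply \eqref{integral equation} (resp.\ \eqref{y}) by the scalar $f_{k}$, take expectations, and use the martingale representation of $f_{k}$ together with It\^{o}'s isometry to convert the stochastic integral into the Lebesgue integral $\int_{0}^{t}(\cdots)\textbf{E}[\theta_{k}(r)\Delta(r,\cdot)]\,dr$. Your write-up is more careful about the Fubini and integrability justifications than the paper's terse argument, but the method is identical.
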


\begin{proof}
	Taking product of both sides of \eqref{integral equation} with $f_{k}$  and then taking expectation of both sides give that
	
	\begin{align*}
	\chi_{k}(t)=c\textbf{E}\phi(0)&+\frac{1}{\Gamma(\alpha)}  \int_{0}^{t}(t-r)^{\alpha-1}\left[ A\chi_{k}(r)
	+B\chi_{k}(r-h)\right] dr \\ 
	&+\frac{1}{\Gamma(\alpha)}\textbf{E}\Big( \int_{0}^{t}(t-r)^{\alpha-1}\Delta(r,x(r))dW(r)\Big)\int_{0}^{T}\theta_{k}(r)dW(r).
	\end{align*}
	
	Using It\^{o}'s representation theorem, we attain 
	\begin{equation*}
	\chi_{k}(t)=c\textbf{E}\phi(0)
	+\frac{1}{\Gamma(\alpha)}  \int_{0}^{t}(t-r)^{\alpha-1}\left[ A\chi_{k}(r)+B\chi_{k}(r-h)+\textbf{E}\theta_{k}(r)\Delta(r,x(r))\right]dr
	\end{equation*}
	such that  $\chi_{k}(t)$ is a solution of the following fractional delay differential equation:
	\begin{equation}
	(\prescript{C}{}{D^{\alpha}_{0^{+}}}x)(t)= Ax(t)+Bx(t-h) +\textbf{E}\theta_{k}(t)\Delta(t,x(t)), \quad x(t)= c\textbf{E}\phi(t), t \in [-h,0].
	\end{equation}
	Then, by means of Remark \ref{remark1},  (\ref{chi}) is proved. \\

	Similarly, by taking product of both sides of \eqref{y} with $f_{k}$ and expectation of both sides yield that
	\begin{align*}
	\tilde{\chi}_{k}(t)=c\mathscr{E}^{A,B}_{h,\alpha}(t)\textbf{E}\phi(-h)
	&+ \int_{-h}^{0}\mathscr{E}^{A,B}_{h,\alpha}(t-h-r)\textbf{E}f_{k}\phi^{\prime}(r)dr \\ 
	&+\textbf{E}\Big( \int_{0}^{t}\mathscr{E}^{A,B}_{h,\alpha, \alpha}(t-h-r)\Delta(r,y(r))dW(r)\Big)\int_{0}^{T}\theta_{k}(r)dW(r).
	\end{align*}
	Then, using It\^{o}'s isometry theorem, we achieve \eqref{tilde{chi}} such that $\tilde{\chi}_{k}(t)$ is a solution of the following fractional delay differential equation:
	\begin{equation}
	(\prescript{C}{}{D^{\alpha}_{0^{+}}}y)(t)= Ay(t)+By(t-h) +\textbf{E}\theta_{k}(t)\Delta(t,y(t)), \quad y(t)= c\textbf{E}\phi(t), t \in [-h,0].
	\end{equation}
	The proof is complete.
\end{proof}
\begin{remark} \label{remark2}
	For any $f \in \mathbb{L}^{2}(\Omega, \mathscr{F}_{T}, \mathbb{R}^{n})$, we have	
	 \begin{equation} \label{remark22}
	 |\textbf{E}\langle x(t)-y(t), f\rangle|^{2} \leq n N^{2}L^{2}_{\Delta}\int_{0}^{t}\textbf{E}\|x(r)-y(r)\|^{2}dr\textbf{E}\|f\|^{2}.
	 \end{equation}
	 \end{remark}
 
 \begin{proof}
 Starting with
	 
	 \begin{align}\label{estimation}
	 |\langle x(t)-y(t),f \rangle |
	 \leq \sqrt{n\sum_{k=1}^{n}|\langle x_{k}(t)-y_{k}(t),f_{k} \rangle |^{2}} 
	 &\leq\sqrt{n\sum_{k=1}^{n}\|\textbf{E}( x_{k}(t)-y_{k}(t))f_{k}\|^{2}} \nonumber \\ 
	 &= \sqrt{n\sum_{k=1}^{n}\|\chi_{k}(t)-\tilde{\chi}_{k}(t)\|^{2}},
	 \end{align}

	estimate $\|\chi_{k}(t)-\tilde{\chi}_{k}(t)\|$ by using H\"{o}lder's inequality and taking $N \coloneqq \sup_{t \in [0,T]}\|\mathscr{E}^{A,B}_{h,\alpha, \alpha}(t)\|$:
	\begin{equation*}
    \|\chi_{k}(t)-\tilde{\chi}_{k}(t)\| \leq N L_{\Delta}\Big(\int_{0}^{t}\textbf{E}\|\theta_{k}(r)\|^{2}\Big)^{\frac{1}{2}}\Big(\int_{0}^{t}\textbf{E}\|x(r)-y(r)\|^{2}dr\Big)^{\frac{1}{2}}.
	\end{equation*}
Plugging above inequality into \eqref{estimation}, we deduce the desired result \eqref{remark22}. The proof is complete.
\end{proof}

\textbf{Proof of Theorem \ref{theorem1}}. Let $T^{*}= \inf \left\lbrace t \in [0,T]; x(t) \neq y(t) \right\rbrace$. Then it is sufficient to show that $T^{*}=T$. \\
Suppose the contrary : $T^{*}< T$. Choose and fix an arbitrary $\delta >0$ satisfying the following expression:
  \begin{equation} \label{star}
  n N^{2}L^{2}_{\Delta} \delta<1.
  \end{equation}
   
To lead contradiction, we show that $x(t) = y(t)$ for all $t \in [T^{*},T^{*}+\delta]$. Using Ito's representation, there exists a unique $f \in H^{2}([0,T], \mathbb{R}^{n})$ such that  $x(t) - y(t)= f$. Therefore, we have 
   \begin{equation*}
   \textbf{E}\|x(t) - y(t)\|^{2}= \textbf{E}\|f\|^{2}.
   \end{equation*}
   Using Remark \ref{remark2}, we attain 
   \begin{equation*}
   \textbf{E}\|x(t) - y(t)\|^{2} \leq n N^{2} L^{2}_{\Delta} \int_{T^{*}}^{t} \textbf{E}\| x(r) - y(r)\|^{2} dr.
   \end{equation*}
 As a consequence,
   \begin{equation*}
   \sup_{t \in [T^{*},T^{*}+\delta]}\textbf{E}\|x(t) - y(t)\|^{2} 
   \leq n N^{2} L^{2}_{\Delta} \delta  \sup_{t \in [T^{*},T^{*}+\delta]}\textbf{E}\|x(t) - y(t)\|^{2}.
  \end{equation*}
   
   By selecting $\delta$ as in \eqref{star}, we have  $\sup_{t \in [T^{*},T^{*}+\delta]}\textbf{E}\|x(t) - y(t)\|^{2}=0$. This leads to a contradiction and the proof is complete. $\blacksquare$

\section{Controllability results} \label{sec:control}
   We shall prove controllability results for linear and nonlinear stochastic delay dynamical systems with fractional-order. First, we present necessary and sufficient conditions for complete controllability of linear fractional stochastic delay system through perturbed controllability matrix and rank condition using the rank correlation of Cayley-Hamilton theorem. Thereafter, sufficient conditions are derived for complete controllability of nonlinear fractional stochastic delay differential equations system using Banach's fixed point theorem.
   
 \begin{defn}
	The fractional stochastic delay system is said to be complete controllable on $[0,T]$, if for every initial condition $\phi(t)$ and $x_{1} \in \mathbb{L}^{2}(\Omega, \mathscr{F}_{T}, \mathbb{R}^{n})$, there exists a control $u \in U_{ad}$, such that the solution $x(t)$  satisfies $x(T)=x_{1}$.	 
\end{defn}

   \subsection{Linear case}
   
   Consider the linear fractional stochastic delay dynamical system on $[0,T]$ of the form :
   
   \begin{equation} \label{Problem3}
   \begin{cases}
   (\prescript{C}{}D^{\alpha}_{0^{+}}x)(t)=Ax(t)+Bx(t-h)+Cu(t)+\Delta(t)\frac{dW(t)}{dt}, \quad h >0,\\
   x(t)=\phi(t), \quad t \in [-h, 0],
   \end{cases}
   \end{equation} 
where $A, B \in \mathbb{R}^{n\times n}$ are permutable matrices, and $C \in \mathbb{R}^{n\times m}$ with $ n>m$, the transposes of $A, B$ and $C$ are denoted by $A^{*}, B^{*}$ and $C^{*}$, respectively; $\Delta \in C([0,T], \mathbb{R}^{n\times n})$, $x(t) \in \mathbb{L}^{2}(\Omega, \mathscr{F}_{T}, \mathbb{R}^{n})$, control function $u \in U_{ad}$,  $(W(t))_{t \geq 0}$ is a standard Brownian motion on a complete probability space $(\Omega, \mathscr{F}_{T},\textbf{P})$ for $T>0$.

The solution of \eqref{Problem3} can be expressed in the following form of 
    \begin{align} \label{x1}
    x(t)=\mathscr{E}^{A,B}_{h,\alpha}(t)\phi(-h) &+\int_{-h}^{0}\mathscr{E}^{A,B}_{h,\alpha}(t-h-r)\phi^{\prime}(r)dr \nonumber \\
    &+\int_{0}^{t}\mathscr{E}^{A,B}_{h,\alpha,\alpha}(t-h-r)Cu(r)dr+ \int_{0}^{t}\mathscr{E}^{A,B}_{h,\alpha,\alpha}(t-h-r)\Delta(r)dW(r).
    \end{align}
   Define a controllability Grammian matrix $\mathcal{W_{T}}: \mathbb{L}^{2}(\Omega, \mathscr{F}_{T}, \mathbb{R}^{n}) \to \mathbb{L}^{2}(\Omega, \mathscr{F}_{T}, \mathbb{R}^{n})$ as:
   \begin{equation}
   	\mathcal{W}_{T}= \int_{0}^{T}\mathscr{E}^{A,B}_{h, \alpha,\alpha}(T-h-r)CC^{*}\mathscr{E}^{A^{*},B^{*}}_{h, \alpha,\alpha}(T-h-r) \textbf{E}\left\lbrace \cdot|\mathscr{F}_{r}\right\rbrace dr.
   \end{equation}
   
   \begin{thm} \label{singular}
   	The system \eqref{Problem3} is completely controllable on $[0,T]$ if and only if $\mathcal{W}_{T}$ is positive.
   \end{thm}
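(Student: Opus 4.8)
The plan is to prove the two implications of the iff separately, mirroring the classical deterministic Kalman-type argument adapted to the stochastic setting via the Grammian operator $\mathcal{W}_T$ on $\mathbb{L}^2(\Omega,\mathscr{F}_T,\mathbb{R}^n)$.

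First I would establish sufficiency: assume $\mathcal{W}_T$ is positive, hence invertible as an operator on $\mathbb{L}^2(\Omega,\mathscr{F}_T,\mathbb{R}^n)$. Given $\phi$ and a target $x_1 \in \mathbb{L}^2(\Omega,\mathscr{F}_T,\mathbb{R}^n)$, I would propose the explicit control
\begin{equation*}
u(t) = C^{*}\mathscr{E}^{A^{*},B^{*}}_{h,\alpha,\alpha}(T-h-t)\,\textbf{E}\Bigl\{\mathcal{W}_T^{-1}\Bigl[x_1 - \mathscr{E}^{A,B}_{h,\alpha}(T)\phi(-h) - \int_{-h}^{0}\mathscr{E}^{A,B}_{h,\alpha}(T-h-r)\phi'(r)dr - \int_{0}^{T}\mathscr{E}^{A,B}_{h,\alpha,\alpha}(T-h-r)\Delta(r)dW(r)\Bigr]\,\Big|\,\mathscr{F}_t\Bigr\},
\end{equation*}
then substitute this $u$ into the solution formula \eqref{x1} evaluated at $t=T$. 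The conditional-expectation structure baked into the definition of $\mathcal{W}_T$ is exactly what is needed so that $\int_0^T \mathscr{E}^{A,B}_{h,\alpha,\alpha}(T-h-r)Cu(r)\,dr$ collapses to $\mathcal{W}_T$ applied to the bracketed quantity; applying $\mathcal{W}_T\mathcal{W}_T^{-1}=\mathrm{Id}$ then yields $x(T)=x_1$. One must also check $u \in U_{ad} = L^{\mathscr{F}}_2([0,T],\mathbb{R}^n)$, i.e. that it is adapted (clear, by the conditional expectation given $\mathscr{F}_t$) and square-integrable (from boundedness of $\mathscr{E}^{A^{*},B^{*}}_{h,\alpha,\alpha}$ on $[0,T]$, Lemma \ref{estimate}, and $x_1,\phi,\Delta \in \mathbb{L}^2$).

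For necessity I would argue by contraposition: suppose $\mathcal{W}_T$ is not positive. Since $\mathcal{W}_T$ is a nonnegative self-adjoint operator on the Hilbert space $\mathbb{L}^2(\Omega,\mathscr{F}_T,\mathbb{R}^n)$, failure of positivity means there is a nonzero $z \in \mathbb{L}^2(\Omega,\mathscr{F}_T,\mathbb{R}^n)$ with $\langle \mathcal{W}_T z, z\rangle = 0$, which forces $C^{*}\mathscr{E}^{A^{*},B^{*}}_{h,\alpha,\alpha}(T-h-r)\,\textbf{E}\{z|\mathscr{F}_r\} = 0$ for a.e.\ $r \in [0,T]$. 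I would then take as target $x_1 = z + \bigl(\text{the deterministic-and-noise part of }x(T)\text{ with }u\equiv 0\bigr)$ and pair the solution identity \eqref{x1} at $t=T$ against $z$: for every admissible $u$, $\textbf{E}\langle \int_0^T \mathscr{E}^{A,B}_{h,\alpha,\alpha}(T-h-r)Cu(r)dr,\, z\rangle = \textbf{E}\int_0^T \langle u(r),\, C^{*}\mathscr{E}^{A^{*},B^{*}}_{h,\alpha,\alpha}(T-h-r)\textbf{E}\{z|\mathscr{F}_r\}\rangle dr = 0$ using adaptedness of $u$ and the tower property, so $\textbf{E}\langle x(T)-x_1, z\rangle = -\textbf{E}\|z\|^2 \neq 0$, whence $x(T)\neq x_1$ for every $u \in U_{ad}$, contradicting complete controllability.

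The main obstacle I anticipate is handling the conditional-expectation operator $\textbf{E}\{\cdot|\mathscr{F}_r\}$ cleanly throughout — in particular verifying that $\mathcal{W}_T$ is genuinely a well-defined, bounded, self-adjoint, nonnegative operator on $\mathbb{L}^2(\Omega,\mathscr{F}_T,\mathbb{R}^n)$ and that it is invertible precisely when positive, and then making the substitution-and-cancellation in the sufficiency direction rigorous (the interchange of $\int_0^T dr$ with $\textbf{E}\{\cdot|\mathscr{F}_r\}$ and the identification with $\mathcal{W}_T$). The deterministic matrix algebra, by contrast, is routine given the solution representation \eqref{x1} already proved.
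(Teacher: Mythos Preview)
Your proposal is correct. The sufficiency direction is identical to the paper's: the same explicit control is substituted into \eqref{x1} at $t=T$, and the defining structure of $\mathcal{W}_T$ makes the control integral collapse to $\mathcal{W}_T\mathcal{W}_T^{-1}$ applied to the bracketed quantity.

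For necessity the paper takes a slightly different route. Rather than constructing an unreachable target as you do, the paper exploits the freedom in the \emph{initial condition}: it picks two initial data $\phi_1,\phi_2$ and, by controllability, controls $u_1,u_2$ steering both to the same $x_1$; subtracting the two solution formulas at $t=T$ yields $\int_0^T \mathscr{E}^{A,B}_{h,\alpha,\alpha}(T-h-r)C[u_2(r)-u_1(r)]\,dr = y$, where $y$ is the (nonzero) difference of the deterministic initial-data contributions, and then pairs with the null-space element to force $\textbf{E}\,y^*y=0$. Your contrapositive ``unreachable target'' argument and the paper's ``two initial conditions'' argument are dual realizations of the same orthogonality fact $\textbf{E}\langle L_T u, z\rangle = 0$ for $z$ in the kernel of $\mathcal{W}_T$; yours is the more standard presentation and avoids the paper's mild notational overload of the symbol $y$, while the paper's version leans on the definition of complete controllability allowing arbitrary initial data.
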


\begin{proof} 
	\textbf{Sufficiency:} Suppose that $\mathcal{W}_{T}$ is positive, then its inverse is well-defined. For all $\phi(t)$, there exists the control function $u(t)$ defined by:
	
\begin{align} \label{ut1}
u(t)= C^{*}\mathscr{E}^{A^{*},B^{*}}_{h,\alpha,\alpha}(T-h-t) \textbf{E} \Biggl\{ (\mathcal{W_{T}})^{-1}\Big(x_{1}-\mathscr{E}^{A,B}_{h,\alpha}(T)\phi(-h) &-\int_{-h}^{0}\mathscr{E}^{A,B}_{h,\alpha}(T-h-r)\phi^{\prime}(r)dr  \\
&-\int_{0}^{T} \mathscr{E}^{A,B}_{h,\alpha,\alpha}(T-h-r) \Delta(r)dW(r)\Big)| \mathscr{F}_{t} \Biggr\}.\nonumber
\end{align}
Letting $t=T$ in \eqref{x1} and substituting \eqref{ut1} into \eqref{x1}, we get:

\begin{align*}
x(T)= \mathscr{E}^{A,B}_{h,\alpha}(T)\phi(-h)&+\int_{-h}^{0}\mathscr{E}^{A,B}_{h,\alpha}(T-h-r)\phi^{\prime}(r)dr\\
&+\int_{0}^{T} \mathscr{E}^{A,B}_{h,\alpha,\alpha}(T-h-r)C C^{*}\mathscr{E}^{A^{*},B^{*}}_{h,\alpha,\alpha}(T-h-r) \\ &\times \textbf{E}\Biggl\{ (\mathcal{W_{T}})^{-1} \Big( x_{1}-\mathscr{E}^{A,B}_{h,\alpha}(T)\phi(-h)-\int_{-h}^{0}\mathscr{E}^{A,B}_{h,\alpha}(T-h-r)\phi^{\prime}(r)dr  \\
&-\int_{0}^{T} \mathscr{E}^{A,B}_{h,\alpha,\alpha}(T-h-r) \Delta(r)dW(r) \Big)| \mathscr{F}_{r} \Biggr\}dr+\int_{0}^{T} \mathscr{E}^{A,B}_{h,\alpha,\alpha}(T-h-r)\Delta(r)dW(r) \\
&=\mathscr{E}^{A,B}_{h,\alpha}(T)\phi(-h)
+\int_{-h}^{0}\mathscr{E}^{A,B}_{h,\alpha}(T-h-r)\phi^{\prime}(r)dr \\
&+\mathcal{W_{T}} (\mathcal{W_{T}})^{-1}\Biggl\{ x_{1}-\mathscr{E}^{A,B}_{h,\alpha}(T)\phi(-h)  -\int_{-h}^{0}\mathscr{E}^{A,B}_{h,\alpha}(T-h-r)\phi^{\prime}(r)dr  \\
&-\int_{0}^{T} \mathscr{E}^{A,B}_{h,\alpha,\alpha}(T-h-r) \Delta(r)dW(r) \Biggr\}+\int_{0}^{T} \mathscr{E}^{A,B}_{h,\alpha,\alpha}(T-h-r)\Delta(r)dW(r)=x_{1}.
\end{align*}
Hence, $x(T)=x_{1}$. Thus, \eqref{Problem3} is completely controllable. \\

\textbf{Necessity:} Assume that \eqref{Problem3} is completely controllable on $[0,T]$. We have to prove  that $\mathcal{W_{T}}$ is positive. Assume the contrary, there exists a vector $y \neq 0$ such that
	
   	\begin{equation*}
   	\textbf{E}\left\lbrace y^{*}\mathcal{W_{T}}y\right\rbrace=0, \quad y \in \mathbb{L}^{2}(\Omega, \mathscr{F}_{T}, \mathbb{R}^{n}), 
   	\end{equation*}
   	i.e., 
   	\begin{equation*}
   	\textbf{E}\left\lbrace \int_{0}^{T} y^{*}\mathscr{E}^{A,B}_{h,\alpha,\alpha}(T-h-r)C C^{*}\mathscr{E}^{A^{*},B^{*}}_{h,\alpha,\alpha}(T-h-r)\textbf{E}\left\lbrace y|\mathscr{F}_{r}\right\rbrace dr \right\rbrace  =0.
   	\end{equation*}
   	 Then, it follows that
   	 \begin{equation} \label{y7}
   	 \textbf{E}\left\lbrace y^{*} \mathscr{E}^{A,B}_{h,\alpha,\alpha}(T-h-t)C|\mathscr{F}_{t}\right\rbrace=0, \forall t \in [0,T].
   	 \end{equation}
   	 Since the system \eqref{Problem3} is completely controllable, there exist control functions $u_{1}(t)$ and $u_{2}(t)$ such that
   	 \begin{align} \label{ux1}
   	 \mathscr{E}^{A,B}_{h,\alpha}(T)\phi_{1}(-h)
   	 &+\int_{-h}^{0}\mathscr{E}^{A,B}_{h,\alpha}(T-h-r)\phi^{\prime}_{1}(r)dr \nonumber\\
   	  &+\int_{0}^{T}\mathscr{E}^{A,B}_{h,\alpha,\alpha}(T-h-r)Cu_{1}(r)dr+\int_{0}^{T} \mathscr{E}^{A,B}_{h,\alpha,\alpha}(T-h-r)\Delta(r)dW(r)=x_{1}, 
   	 \end{align}
   	 and
   	 \begin{align}\label{ux2}
   	 \mathscr{E}^{A,B}_{h,\alpha}(T)\phi_{2}(-h)
   	 &+\int_{-h}^{0}\mathscr{E}^{A,B}_{h,\alpha}(T-h-r)\phi^{\prime}_{2}(r)dr \nonumber\\
   	 &+\int_{0}^{T}\mathscr{E}^{A,B}_{h,\alpha,\alpha}(T-h-r)Cu_{2}(r)dr+\int_{0}^{T} \mathscr{E}^{A,B}_{h,\alpha,\alpha}(T-h-r)\Delta(r)dW(r)=x_{1}.
   	 \end{align}
   	 From \eqref{ux1} and \eqref{ux2}, we obtain the following expression:
   	 \begin{equation} \label{10}
   	 \int_{0}^{T}\mathscr{E}^{A,B}_{h,\alpha,\alpha}(T-h-r)C\left[u_{2}(r)-u_{1}(r)\right] dr=y,
   	 \end{equation}
   	 where 
   	 \begin{align*}
   	 y \coloneqq \mathscr{E}^{A,B}_{h,\alpha}(T)\left[ \phi_{1}(-h)-\phi_{2}(-h)\right] 
   	 +\int_{-h}^{0}\mathscr{E}^{A,B}_{h,\alpha}(T-h-r)\left[ \phi^{\prime}_{1}(r)- \phi^{\prime}_{2}(r)\right] dr \neq 0.
   	 \end{align*}
   	 Multiplying by $\textbf{E}\left\lbrace y^{*}\right\rbrace$ on the both side of \eqref{10}, we have
   	 \begin{equation*}
   	 \textbf{E}\left\lbrace \int_{0}^{T} y^{*}\mathscr{E}^{A,B}_{h,\alpha,\alpha}(T-h-r)C\left[u_{2}(r)-u_{1}(r)\right] dr\right\rbrace =0.
   	 \end{equation*}
   	 By \eqref{y7}, we acquire $\textbf{E}y^{*}y=0$, which is contradiction to $y\neq 0$.
   	 The proof is complete.
   	\end{proof}
   	 Furthermore, we prove the complete controllability results in the following theorem by using the rank correlation of Cayley-Hamilton theorem. In this case, we need to formulate on algebraic condition equivalent to controllability. For matrices $A,B \in \mathbb{R}^{n\times n}$ and $C \in \mathbb{R}^{n \times m}$ denote by $H_{n}$, where the matrix
   	 \begin{equation*}
   	 H_{n} \coloneqq \Biggl\{C| AC| A^{2}C|\cdots |A^{n-1}C |BC| ABC | A^{2}BC| \cdots |A^{n-1}BC|\cdots |B^{n-1}C|AB^{n-1}C| \cdots | A^{n-1}B^{n-1}C \Biggr\},
   	 \end{equation*}
   	 which consists of consecutively written columns of matrices $C, AC, A^{2}C, \cdots, A^{n-1}B^{n-1}C$. 
   	 \begin{thm} \label{thm1}
   	 	The following statements are equivalent: \\
   	 	(i) System \eqref{Problem3} is completely controllable ;\\
   	 	(ii) System \eqref{Problem3} is completely controllable at a given time $T \geq (n-1)h$ ;\\
   	 	(iii) Grammian matrix $\mathcal{W_{T}}$ is positive for an arbitrary $T>0$; \\
   	 	(iv) $rank H_{n}=n$. \\
   	 	 \end{thm}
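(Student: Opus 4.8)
The plan is to prove the four statements equivalent through the cycle $(i)\Rightarrow(ii)\Rightarrow(iv)\Rightarrow(iii)\Rightarrow(i)$, using Theorem~\ref{singular} as the bridge between controllability and positivity of $\mathcal{W}_{T}$. The arrows $(i)\Rightarrow(ii)$ and $(iii)\Rightarrow(i)$ (together with the auxiliary $(iii)\Rightarrow(ii)$) are essentially free: the first is a weakening, since $(ii)$ is the same controllability property now carrying the standing restriction $T\ge(n-1)h$ (which holds in the set-up of \eqref{Problem3}, where $T=nh$), while the latter two are direct instances of Theorem~\ref{singular}. The real content is in $(ii)\Rightarrow(iv)$ and, above all, in $(iv)\Rightarrow(iii)$; both rest on one deterministic algebraic lemma.

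That lemma is: for $q\in\mathbb{R}^{n}$ one has $q^{*}\mathscr{E}^{A,B}_{h,\alpha,\alpha}(t)C=0$ on an interval on which $\mathscr{E}^{A,B}_{h,\alpha,\alpha}$ runs through all the branches of \eqref{delMitABtau} (e.g.\ $t\in[-h,(n-1)h]$) if and only if $q^{*}A^{i}B^{j}C=0$ for $0\le i,j\le n-1$, i.e.\ $q^{*}H_{n}=0$. I would prove it by inserting the series form of \eqref{delMitABtau} with $\beta=\alpha$: on $((m-1)h,mh]$,
\begin{equation*}
\mathscr{E}^{A,B}_{h,\alpha,\alpha}(t)\,C=\sum_{k=0}^{m}\sum_{i\ge 0}\frac{(k+1)_{i}}{i!\,\Gamma\big((i+k+1)\alpha\big)}\,\big(t-(k-1)h\big)^{(i+k+1)\alpha-1}\,A^{i}B^{k}C,
\end{equation*}
where $AB=BA$ has been used to push the matrix powers through $C$. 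The shifted generalized monomials $\big(t-(k-1)h\big)^{(i+k+1)\alpha-1}$ ($0\le k\le m$, $i\ge 0$) are pairwise distinct and linearly independent on any nondegenerate interval, so the identity forces each coefficient $q^{*}A^{i}B^{k}C$ to vanish; the Cayley--Hamilton theorem (for $A$) then truncates the exponents to $i\le n-1$, yielding $q^{*}H_{n}=0$. The converse is the reverse Cayley--Hamilton resummation. Consequently $H_{n}$ has trivial left kernel exactly when $\operatorname{rank}H_{n}=n$.

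For $(ii)\Rightarrow(iv)$ I argue by contraposition. If $\operatorname{rank}H_{n}<n$, pick $0\ne q\in\mathbb{R}^{n}$ with $q^{*}H_{n}=0$; the lemma gives $q^{*}\mathscr{E}^{A,B}_{h,\alpha,\alpha}(\cdot)C\equiv 0$, hence $C^{*}\mathscr{E}^{A^{*},B^{*}}_{h,\alpha,\alpha}(\cdot)q\equiv 0$ after transposing (again $AB=BA$ makes $(\mathscr{E}^{A,B}_{h,\alpha,\alpha})^{*}=\mathscr{E}^{A^{*},B^{*}}_{h,\alpha,\alpha}$). Taking the constant, hence adapted, process $y\equiv q$ and using the tower property together with the fact that the Mittag--Leffler matrices are deterministic,
\begin{equation*}
\mathbf{E}\{q^{*}\mathcal{W}_{T}q\}=\int_{0}^{T}\big\|C^{*}\mathscr{E}^{A^{*},B^{*}}_{h,\alpha,\alpha}(T-h-r)\,q\big\|^{2}\,dr=0 ,
\end{equation*}
so $\mathcal{W}_{T}$ is not positive and, by Theorem~\ref{singular}, \eqref{Problem3} is not completely controllable at time $T$ --- contradicting $(ii)$.

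The heart of the matter is $(iv)\Rightarrow(iii)$, which I would handle in the spirit of the necessity part of Theorem~\ref{singular}. Assume $\operatorname{rank}H_{n}=n$ and $\mathbf{E}\{y^{*}\mathcal{W}_{T}y\}=0$ for some $y\in\mathbb{L}^{2}(\Omega,\mathscr{F}_{T},\mathbb{R}^{n})$. Rewriting the Gramian's quadratic form with It\^{o}'s isometry and the tower property gives $\mathbf{E}\{y^{*}\mathcal{W}_{T}y\}=\mathbf{E}\int_{0}^{T}\|C^{*}\mathscr{E}^{A^{*},B^{*}}_{h,\alpha,\alpha}(T-h-r)\,\mathbf{E}\{y\mid\mathscr{F}_{r}\}\|^{2}\,dr$, whence $C^{*}\mathscr{E}^{A^{*},B^{*}}_{h,\alpha,\alpha}(T-h-r)\,\mathbf{E}\{y\mid\mathscr{F}_{r}\}=0$ for a.e.\ $r$, a.s. Representing the martingale by $\mathbf{E}\{y\mid\mathscr{F}_{r}\}=\mathbf{E}y+\int_{0}^{r}\Phi(s)\,dW(s)$ and then taking expectations (respectively second moments, again by It\^{o}'s isometry and the determinism of the Mittag--Leffler factor) splits this into the two deterministic conditions $C^{*}\mathscr{E}^{A^{*},B^{*}}_{h,\alpha,\alpha}(T-h-r)\,\mathbf{E}y=0$ for all $r$ and $C^{*}\mathscr{E}^{A^{*},B^{*}}_{h,\alpha,\alpha}(T-h-r)\,\Phi(s)=0$ for a.e.\ $s\le r$, a.s.; applying the lemma to each gives $\mathbf{E}y=0$ and $\Phi\equiv 0$, so $y=0$ and $\mathcal{W}_{T}$ is positive. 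The main obstacle is exactly this last transfer across the stochastic layer: one must treat the $r$-dependence of $\mathbf{E}\{y\mid\mathscr{F}_{r}\}$ with care so that the pointwise-in-$r$ orthogonality really propagates, via the representation theorem and the linear independence of the branch functions, to the full algebraic conclusion --- which in turn needs the terminal time to be large enough for \eqref{delMitABtau} to exhibit all of its branches on $[0,T]$.
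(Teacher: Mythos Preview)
Your proposal is correct and rests on the same core idea as the paper: the equivalence with $(iv)$ reduces to the algebraic fact that a (left) null-vector $q$ of $H_{n}$ is precisely one for which $q^{*}\mathscr{E}^{A,B}_{h,\alpha,\alpha}(\cdot)C\equiv 0$, and this is obtained by reading off the matrix coefficients $A^{i}B^{k}C$ from the branchwise series of \eqref{delMitABtau} together with Cayley--Hamilton. The paper organises the argument slightly differently: it groups $(i)$--$(iii)$ together via Theorem~\ref{singular}, then attaches $(iv)$ by showing that $L_{T}$ and the finite linear map $\mathcal{L}_{n}(u_{0},\dots,u_{n-1})=\sum_{j}A^{j}(I+B+\dots+B^{n-1})Cu_{j}$ have the same image; and it isolates the coefficients not by a linear-independence argument for shifted monomials but by taking Caputo derivatives of the identity $C^{*}\mathscr{E}^{A^{*},B^{*}}_{h,\alpha,\alpha}(t)v=0$ of orders $(k+1)\alpha-1,\dots,(k+n)\alpha-1$ and evaluating at $t=0$. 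Your dual/kernel formulation and the monomial-independence extraction are equivalent and arguably cleaner. The one place where you go beyond the paper is the step $(iv)\Rightarrow(iii)$: you make explicit, via the martingale representation $\mathbf{E}\{y\mid\mathscr{F}_{r}\}=\mathbf{E}y+\int_{0}^{r}\Phi\,dW$ and It\^{o}'s isometry, how positivity of the \emph{stochastic} Gramian $\mathcal{W}_{T}$ follows from the deterministic rank condition, whereas the paper passes over this reduction and argues directly with surjectivity of $L_{T}$ ``onto $\mathbb{R}^{n}$''. Your treatment of that transfer is the genuinely new ingredient in your write-up.
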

   	Condition (iv) is called the $Kalman$ $rank$ $condition$. To apply Cayley-Hamilton theorem, first, we determine the characteristic polynomial $p(\cdot)$ of a matrix $A \in \mathbb{R}^{n \times n}$ is defined by:
   	\begin{equation}\label{poly}
   	p(\lambda)= \det [\lambda I-A], \quad \lambda \in \mathbb{C},
   	\end{equation} 
  where $I \in \mathbb{R}^{n \times n}$ is the identity matrix.
   	Let 
   	\begin{equation}\label{plambda}
   	p(\lambda)= \lambda^{n}+a_{1}\lambda^{n-1}+ \cdots+ a_{n}, \quad  \lambda \in \mathbb{C}.
   	\end{equation}
   	The Cayley-Hamilton theorem has the following formulation in the next theorem \cite{balakrishnan}.
   	\begin{thm} [Cayley-Hamilton]\label{Cayley}
   		For arbitrary $A \in \mathbb{R}^{n \times n}$, with the characteristic polynomial \eqref{plambda},
   		\begin{equation}
   		A^{n}+a_{1}A^{n-1}+\cdots+ a_{n}I=0.
   		\end{equation}
   		Symbolically, $p(A)=0$.
   	\end{thm}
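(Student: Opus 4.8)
The plan is to prove $p(A)=0$ by the classical adjugate (cofactor-matrix) method, which rigorously realizes the intuition behind Cayley--Hamilton while avoiding the tempting but fallacious substitution $\lambda \mapsto A$ directly into $p(\lambda)=\det(\lambda I - A)$. First I would set $M(\lambda) := \lambda I - A$ and invoke the standard adjugate identity
\begin{equation*}
M(\lambda)\,\operatorname{adj} M(\lambda) = \det M(\lambda)\, I = p(\lambda)\, I,
\end{equation*}
valid for every scalar $\lambda$, where $\operatorname{adj}$ denotes the transpose of the cofactor matrix. This is the only external fact the argument relies on.

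The key structural observation is that each entry of $\operatorname{adj} M(\lambda)$ is, up to sign, an $(n-1)\times(n-1)$ minor of $\lambda I - A$, hence a polynomial in $\lambda$ of degree at most $n-1$. Consequently I can collect powers of $\lambda$ and write
\begin{equation*}
\operatorname{adj} M(\lambda) = \sum_{k=0}^{n-1} B_k \lambda^k, \qquad B_k \in \mathbb{R}^{n\times n}.
\end{equation*}
Substituting this expansion into the adjugate identity, the left-hand side becomes $\sum_{k=0}^{n-1} B_k \lambda^{k+1} - \sum_{k=0}^{n-1} A B_k \lambda^k$ (since the scalar $\lambda$ commutes with every matrix), while the right-hand side is $\sum_{k=0}^{n} a_{n-k} \lambda^k I$ with the convention $a_0 = 1$ coming from \eqref{plambda}.

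Next I would equate the matrix coefficients of each power $\lambda^k$, $k=0,1,\dots,n$, obtaining the chain of relations $B_{n-1} = I$, then $B_{k-1} - A B_k = a_{n-k} I$ for $1 \le k \le n-1$, and finally $-A B_0 = a_n I$. Multiplying the relation at level $k$ on the left by $A^k$ and summing over all $k$ makes the left-hand side telescope to the zero matrix (the term $A^{k+1}B_k$ produced at level $k$ cancels the term $A^{k+1}B_k$ produced at level $k+1$), whereas the right-hand side collapses exactly to $A^n + a_1 A^{n-1} + \cdots + a_{n-1} A + a_n I = p(A)$. This yields $p(A) = 0$, which is the assertion.

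I expect the main obstacle to be bookkeeping rather than conceptual: one must carefully align the index shift in the coefficient comparison and confirm that the weighted sum $\sum_k A^k(\cdot)$ telescopes cleanly, since an off-by-one slip in the $a_0=1$ convention or in the degree bound on $\operatorname{adj} M(\lambda)$ would break the cancellation. Conceptually, the single point demanding emphasis is that the adjugate identity must be read as an equality of polynomials with \emph{matrix} coefficients, so that equating coefficients of like powers of the scalar $\lambda$ is legitimate; by contrast, literally setting $\lambda = A$ in $p(\lambda)=\det(\lambda I - A)$ is meaningless, because the determinant is formed from scalar entries and the spurious conclusion $\det(A\cdot I - A)=\det(0)=0$ confuses a scalar determinant with a matrix evaluation.
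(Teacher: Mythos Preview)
Your argument is the standard and correct adjugate proof of Cayley--Hamilton: the coefficient comparison and the telescoping sum are set up accurately, and your cautionary remark about the illegitimate direct substitution $\lambda\mapsto A$ is well placed.

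Note, however, that the paper does \emph{not} supply its own proof of this theorem. It is stated only as a quotation of a classical result, with a citation to \cite{balakrishnan}, and is then used as a tool inside the proof of Theorem~\ref{thm1}. So there is no ``paper's proof'' to compare against; you have simply filled in a proof that the authors chose to omit. Your write-up would serve perfectly well if a self-contained argument were desired, but for the purposes of this paper the theorem is treated as background and a citation suffices.
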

   \textbf{Proof of Theorem \ref{thm1}}. Equivalencies of (i)-(iii) follow from the proof of Theorem \ref{singular} and the following identity for $L_{T}: U_{ad} to \mathbb{L}^{2}(\Omega, \mathscr{F}_{T}, \mathbb{R}^{n})$, 
   	\begin{equation} \label{identity}
   	x(T)=L_{T}u + \mathfrak{G}, 
   	\end{equation}
   	where
   	\begin{align*} 
   	  x(T)=\mathscr{E}^{A,B}_{h,\alpha}(T)\phi(-h) &+\int_{-h}^{0}\mathscr{E}^{A,B}_{h,\alpha}(T-h-r)\phi^{\prime}(r)dr \nonumber \\
   	  &+\int_{0}^{T}\mathscr{E}^{A,B}_{h,\alpha,\alpha}(T-h-r)Cu(r)dr+ \int_{0}^{T}\mathscr{E}^{A,B}_{h,\alpha,\alpha}(T-h-r)\Delta(r)dW(r),
   	  \end{align*}
   	 	 and 
   	 	 \begin{align*} 
   	 	 \mathfrak{G}=\mathscr{E}^{A,B}_{h,\alpha}(T)\phi(-h) &+\int_{-h}^{0}\mathscr{E}^{A,B}_{h,\alpha}(T-h-r)\phi^{\prime}(r)dr \nonumber \\
   	 	 &+\int_{0}^{T}\mathscr{E}^{A,B}_{h,\alpha,\alpha}(T-h-r)\Delta(r)dW(r).
   	 	 \end{align*}
   	 	 From \eqref{identity},  by change of variable $T-h-r= \mu,$ we have  
   	 	 \begin{align} \label{x-P}
   	 	 L_{T}u=x(T)-\mathfrak{G}&= \int_{0}^{T}\mathscr{E}^{A,B}_{h,\alpha,\alpha}(T-h-r)Cu(r)dr
   	 	 =\int_{-h}^{T-h}\mathscr{E}^{A,B}_{h,\alpha,\alpha}(\mu)Cu(T-h-\mu)d\mu \nonumber\\
   	 	 &=\int_{-h}^{0}(\mu+h)^{\alpha-1}\Big[I\frac{1}{\Gamma(\alpha)}+A\frac{(\mu+h)^{\alpha}}{\Gamma(2\alpha)}+\cdots+A^{l-1}\frac{(\mu+h)^{(l-1)\alpha}}{\Gamma(l\alpha)}\Big]Cu(T-h-\mu)d\mu \nonumber\\
   	 	 &+\int_{0}^{h}\Big[(\mu+h)^{\alpha-1}\Big(I\frac{1}{\Gamma(\alpha)}+A\frac{(\mu+h)^{\alpha}}{\Gamma(2\alpha)}+\cdots+A^{l-1}\frac{(\mu+h)^{(l-1)\alpha}}{\Gamma(l\alpha)}\Big)\nonumber\\
   	 	 &+\mu^{2\alpha-1}\Big(B\frac{1}{\Gamma(2\alpha)} +AB\frac{2\mu^{\alpha}}{\Gamma(3\alpha)}+\cdots+A^{l-1}B\frac{l\mu^{(l-1)\alpha}}{\Gamma((l+1)\alpha)}\Big)             \Big]Cu(T-h-\mu)d\mu \nonumber\\
   	 	 & +\cdots+  \int_{(l-2)h}^{T-h}\Big[ (\mu+h)^{\alpha-1}\Big(I\frac{1}{\Gamma(\alpha)}+A\frac{(\mu+h)^{\alpha}}{\Gamma(2\alpha)}+\cdots+A^{l-1}\frac{(\mu+h)^{(l-1)\alpha}}{\Gamma(l\alpha)}\Big)\nonumber
   	 	 \end{align}
   	 	 \begin{align}
   	 	 &+\mu^{2\alpha-1}\Big(B\frac{1}{\Gamma(2\alpha)} +AB\frac{2\mu^{\alpha}}{\Gamma(3\alpha)}+\cdots+A^{l-1}B\frac{l\mu^{(l-1)\alpha}}{\Gamma((l+1)\alpha)}\Big) \nonumber \\
   	 	 &+\cdots + (\mu-(l-2)h)^{l\alpha-1}\Big(B^{l-1}\frac{1}{\Gamma(l\alpha)}+AB^{l-1} \begin{pmatrix}l \\ 1\end{pmatrix}\frac{(\mu-(l-2)h)^{\alpha}}{\Gamma((l+1)\alpha)} 
   	 	 \nonumber\\
   	 	 &+\cdots+A^{l-1}B^{l-1}\begin{pmatrix} 2l-2 \\ l-1 \end{pmatrix}\frac{(\mu-(l-2)h)^{(l-1)\alpha}}{\Gamma((2l-1)\alpha)}     \Big)\Big]Cu(T-h-\mu)d\mu\nonumber\\
   	 	 &=\int_{-h}^{0}\Big[I\frac{(\mu+h)^{\alpha-1}}{\Gamma(\alpha)}+A\frac{(\mu+h)^{2\alpha-1}}{\Gamma(2\alpha)}+\cdots+A^{l-1}\frac{(\mu+h)^{l\alpha-1}}{\Gamma(l\alpha)}\Big]Cu(T-h-\mu)d\mu \\
   	 	 &+\int_{0}^{h}\Big[I\frac{(\mu+h)^{\alpha-1}}{\Gamma(\alpha)}+A\frac{(\mu+h)^{2\alpha-1}}{\Gamma(2\alpha)}+\cdots+A^{l-1}\frac{(\mu+h)^{l\alpha-1}}{\Gamma(l\alpha)} \nonumber\\
   	 	 &+B\frac{\mu^{2\alpha-1}}{\Gamma(2\alpha)}+AB\frac{2\mu^{3\alpha-1}}{\Gamma(3\alpha)}+\cdots+A^{l-1}B\frac{l\mu^{(l+1)\alpha-1}}{\Gamma((l+1)\alpha)} \Big]Cu(T-h-\mu)d\mu \nonumber\\
   	 	 &+\cdots+ \int_{(l-2)h}^{T-h}\Big[I\frac{(\mu+h)^{\alpha-1}}{\Gamma(\alpha)}+A\frac{(\mu+h)^{2\alpha-1}}{\Gamma(2\alpha)}+\cdots+A^{l-1}\frac{(\mu+h)^{l\alpha-1}}{\Gamma(l\alpha)} \nonumber \\
   	 	 &+B\frac{\mu^{2\alpha-1}}{\Gamma(2\alpha)}+AB\frac{2\mu^{3\alpha-1}}{\Gamma(3\alpha)}+\cdots+A^{l-1}B\frac{l\mu^{(l+1)\alpha-1}}{\Gamma((l+1)\alpha)} \nonumber\\
   	 	 &+\cdots+ B^{l-1}\frac{(\mu-(l-2)h)^{l\alpha-1}}{\Gamma(l\alpha)} +AB^{l-1} \begin{pmatrix}l \nonumber\\ 1\end{pmatrix}\frac{(\mu-(l-2)h)^{(l+1)\alpha-1}}{\Gamma((l+1)\alpha)} 
   	 	 \nonumber\\
   	 	 &+\cdots+A^{l-1}B^{l-1}\begin{pmatrix} 2l-2 \\ l-1 \end{pmatrix}\frac{(\mu-(l-2)h)^{(2l-1)\alpha}}{\Gamma((2l-1)\alpha-1)}\Big]Cu(T-h-\mu)d\mu. \nonumber
   	 	 \end{align}

   	 	 If we denote 
   	 	 \begin{align} \label{psi}
   	 	 &\Psi_{11}(T)=\int_{-h}^{T-h}\frac{(\mu+h)^{\alpha-1}}{\Gamma(\alpha)}u(T-h-\mu)d\mu ,\nonumber\\
   	 	 &\Psi_{21}(T)=\int_{-h}^{T-h}\frac{(\mu+h)^{2\alpha-1}}{\Gamma(2\alpha)}u(T-h-\mu)d\mu ,\nonumber\\
   	 	 & \cdots \nonumber\\
   	 	 &\Psi_{m1}(T)=\int_{-h}^{T-h}\frac{(\mu+h)^{l\alpha-1}}{\Gamma(l\alpha)}u(T-h-\mu)d\mu; \nonumber \\ 
   	 	 &\Psi_{12}(T)=\int_{0}^{T-h}\frac{\mu^{2\alpha-1}}{\Gamma(2\alpha)}u(T-h-\mu)d\mu ,\nonumber\\
   	 	 &\Psi_{22}(T)=\int_{0}^{T-h}\frac{2\mu^{3\alpha-1}}{\Gamma(3\alpha)}u(T-h-\mu)d\mu , \nonumber\\
   	 	 & \cdots \nonumber\\
   	 	 &\Psi_{m2}(T)=\int_{0}^{T-h}\frac{l\mu^{(l+1)\alpha-1}}{\Gamma((l+1)\alpha)}u(T-h-\mu)d\mu ; \\ 
   	 	 & \cdots \nonumber \\
   	 	 &\Psi_{1l}(T)=\int_{(l-2)h}^{T-h}\frac{(\mu-(l-2)h)^{l\alpha-1}}{\Gamma(k\alpha)}u(T-h-\mu)d\mu,\nonumber \\
   	 	 &\Psi_{2l}(T)=\int_{(l-2)h}^{T-h}\begin{pmatrix}
   	 	 l \\ 1
   	 	 \end{pmatrix}\frac{(\mu-(l-2)h)^{(l+1)\alpha-1}}{\Gamma((l+1)\alpha)}u(T-h-\mu)d\mu,\nonumber \\
   	 	 &\cdots \nonumber \\
   	 	 &\Psi_{ml}(T)=\int_{(l-2)h}^{T-h}\begin{pmatrix}
   	 	 2l-2 \\ l-1
   	 	 \end{pmatrix}\frac{(\mu-(l-2)h)^{(2l-1)\alpha-1}}{\Gamma((2l-1)\alpha)}u(T-h-\mu)d\mu.\nonumber 
   	 	 \end{align}
   	 	 
   	 	 Then, using \eqref{psi}, \eqref{x-P} can be written as below:
   	 	 \begin{align}\label{matrix}
   	 	 & C\Psi_{11}(T)+BC\Psi_{12}(T)+ \cdots + B^{l-1}C\Psi_{1l}(T)+AC\Psi_{21}(T) \\ \nonumber
   	 	 &+ ABC\Psi_{22}(T) +\cdots + AB^{l-1}C\Psi_{2l}(T)  + A^{2}C\Psi_{31}(T) \\ \nonumber
   	 	 &+ A^{2}BC\Psi_{32}(T) + \cdots  + A^{l-1}C\Psi_{m1}(T)
   	 	  +A^{l-1}BC\Psi_{m2}(T) +\cdots+ A^{l-1}B^{l-1}C\Psi_{ml}(T)=L_{T}u.
   	 	 \end{align}
   	 	 Because the linear system \eqref{Problem3} is completely controllable, then \eqref{matrix} has a solution for an arbitrary vector $\nu$. If $l < n$, then the system is overdetermined and does not always have a solution. Therefore, for the system \eqref{Problem3} to be completely controllable, it is necessary that $T > (l-1)h\geq (n-1)h$. \\ 
   	 	 To show equivalencies for condition (iv), it is convenient to introduce a linear mapping $\mathcal{L}_{n}$ from  $\mathbb{L}^{2}(\Omega, \mathscr{F}_{T}, \mathbb{R}^{m})$ into $\mathbb{L}^{2}(\Omega, \mathscr{F}_{T}, \mathbb{R}^{n})$ according to the relation \eqref{matrix}
   	 	 \begin{align*}
   	 	 \mathcal{L}_{n}(u_{0},u_{1}, \cdots, u_{n-1})&= \sum_{j=0}^{n-1}A^{j}(I+B+\cdots+B^{n-1})Cu_{j}\\
   	 	 &=\sum_{j=0}^{n-1}A^{j}Cu_{j}+\sum_{j=0}^{n-1}A^{j}BCu_{j}+\cdots+\sum_{j=0}^{n-1}A^{j}B^{n-1}Cu_{j},  u_{j} \in \mathbb{L}^{2}(\Omega, \mathscr{F}_{T}, \mathbb{R}^{n}), j=0,...,n-1.
   	 	 \end{align*}
   	 	 We need to prove first the following lemma.
   	 	 \begin{lem}
   	 	 	The transformation $L_{T}$, $T>0$, has the same image as $\mathcal{L}_{n}$. In particular, $L_{T}$ is onto if and only if $\mathcal{L}_{n}$ is onto. 
   	 	 \end{lem}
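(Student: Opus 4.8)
The plan is to compute the images of the two maps and recognise that both coincide with the column space of the delayed Kalman matrix $H_{n}$; once that is established the ``onto'' equivalence is immediate, since a linear operator with target $\mathbb{L}^{2}(\Omega,\mathscr{F}_{T},\mathbb{R}^{n})$ is surjective exactly when its image is all of that space. I would start from the expansion already produced in \eqref{x-P}--\eqref{matrix}, which exhibits $L_{T}u$ as a finite sum $\sum_{i,j}c_{ij}\,A^{i-1}B^{j-1}C\,\Psi_{ij}(T)$, the $c_{ij}$ being nonzero constants, the indices $i,j$ running up to $l$ (where $l$ is the integer with $T\in((l-1)h,lh]$), and each $\Psi_{ij}(T)$ a linear functional of the control $u$ with values in $\mathbb{R}^{m}$. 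This yields at once $\operatorname{Im}L_{T}\subseteq\operatorname{span}\{\,\text{columns of }A^{i}B^{j}C:0\le i,j\le l-1\,\}$.

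For the reverse inclusion I would argue that the scalar kernels appearing in the $\Psi_{ij}(T)$, essentially the powers $\mu\mapsto(\mu-(j-2)h)^{(i+j-1)\alpha-1}$ restricted to their respective subintervals of $[-h,T-h]$, are linearly independent as functions of $\mu$, so that letting $u$ range over (deterministic) finite linear combinations of these kernels --- which are admissible controls, since any $L^{2}$ function is trivially $\mathbb{F}_{T}$-adapted --- one can prescribe the whole tuple $(\Psi_{ij}(T))_{i,j}$ freely. As the $c_{ij}$ do not vanish, this promotes the previous inclusion to an equality. Then the Cayley--Hamilton theorem (Theorem \ref{Cayley}), applied separately to the $n\times n$ matrices $A$ and $B$, shows that $A^{i}$ with $i\ge n$ and $B^{j}$ with $j\ge n$ are linear combinations of the lower powers, so that for $l\ge n$ the above span collapses onto $\operatorname{span}\{\,\text{columns of }A^{i}B^{j}C:0\le i,j\le n-1\,\}$, which is precisely the column space of $H_{n}$; in particular $\operatorname{Im}L_{T}$ no longer depends on $T$.

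I would then carry out the same Cayley--Hamilton reduction on the defining formula of $\mathcal{L}_{n}$ to identify $\operatorname{Im}\mathcal{L}_{n}$ with the column space of $H_{n}$ as well, and conclude $\operatorname{Im}L_{T}=\operatorname{Im}\mathcal{L}_{n}$, whence $L_{T}$ is onto if and only if $\mathcal{L}_{n}$ is onto. The step I expect to be hardest is the reverse inclusion for $L_{T}$: establishing the genuine linear independence of the finitely many functionals $\Psi_{ij}(T)$ forces one to keep track, interval by interval, of the distinct powers of $\mu$ and the distinct break-points $(j-2)h$, and one must also check that restricting to admissible (adapted, square-integrable) controls loses nothing. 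A further delicate point is the $\mathcal{L}_{n}$ side of the comparison: one has to be sure the Cayley--Hamilton reduction there really fills out all of $\operatorname{col}H_{n}$ and is not confined to the smaller subspace spanned by the columns of $A^{j}(I+B+\cdots+B^{n-1})C$. Finally, the clean identification with $\operatorname{col}H_{n}$ is valid only in the regime $l\ge n$, i.e. $T>(n-1)h$; for smaller $T$ one obtains merely a subspace, which is consistent with the necessity of $T>(n-1)h$ noted earlier in the proof of Theorem \ref{thm1}.
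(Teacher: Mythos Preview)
Your approach differs from the paper's, which argues by duality rather than by direct image computation: the paper shows $(\operatorname{Im}L_T)^\perp=(\operatorname{Im}\mathcal{L}_n)^\perp$ by identifying both with the set $\{v:C^*(B^*)^j(A^*)^iv=0,\ 0\le i,j\le n-1\}$. Orthogonality to $\mathcal{L}_n$ yields these relations; Cayley--Hamilton extends them to all powers, so the series for $C^*\mathscr{E}^{A^*,B^*}_{h,\alpha,\alpha}(t)v$ collapses to zero and $v\perp\operatorname{Im}L_T$; conversely, from $C^*\mathscr{E}^{A^*,B^*}_{h,\alpha,\alpha}(t)v\equiv0$ the paper recovers the individual relations by (fractional) differentiation at $t=0$. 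This neatly bypasses the linear-independence step you correctly flag as hardest. Your direct route is workable, but your sketch of that step is too optimistic: the Mittag--Leffler series in $A$ is infinite, so after reducing modulo Cayley--Hamilton the scalar coefficient of each $A^iB^jC$ is not a bare monomial $(\mu-(j-2)h)^{(i+j-1)\alpha-1}$ but itself a full power series in $\mu$; proving these $n^2$ coefficient functions linearly independent on $[0,T]$ is possible (e.g.\ via their leading singular behaviour at the successive break-points $(j-1)h$) but heavier than your outline suggests, and in the end encodes the same analytic content the paper extracts by differentiation at a single point. Your caveats about the $\mathcal{L}_n$ side and about the regime $T>(n-1)h$ are well placed; the paper's own handling of $\mathcal{L}_n$ is in fact loose on precisely the point you raise.
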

   	 	 	\begin{proof}
   	 	 		For arbitrary $v \in \mathbb{L}^{2}(\Omega, \mathscr{F}_{T}, \mathbb{R}^{n})$, $u \in U_{ad}$, $u_{i} \in \mathbb{L}^{2}(\Omega, \mathscr{F}_{T}, \mathbb{R}^{m}), i=0,1,\cdots, n-1$,
   	 	 		\begin{align*}
   	 	 		&\textbf{E} \langle L_{T}u,v \rangle=\textbf{E} \int_{0}^{T} \langle u(r), C^{*}\mathscr{E}^{A^{*},B^{*}}_{h,\alpha,\alpha}(T-h-r)v(r)\rangle dr, \\
   	 	 		&\textbf{E} \langle \mathcal{L}_{n}(u_{0}, u_{1},\cdots, u_{n-1}), v \rangle= \textbf{E} \langle u_{0}, C^{*}v\rangle +\textbf{E} \langle u_{0}, C^{*}A^{*}v \rangle \\
   	 	 		&+ \cdots + \textbf{E}\langle u_{0},C^{*}(A^{*})^{n-1}v\rangle + \textbf{E} \langle u_{1}, C^{*}B^{*}v \rangle +\textbf{E} \langle u_{1},C^{*}B^{*}A^{*}v\rangle\\
   	 	 		&+\cdots +\textbf{E} \langle u_{1},C^{*}B^{*}(A^{*})^{n-1}v\rangle \\
   	 	 		&+\cdots + \textbf{E}\langle u_{n-1}, C^{*}(B^{*})^{n-1}v\rangle+\cdots + \textbf{E}\langle u_{n-1}, C^{*}(B^{*})^{n-1}(A^{*})^{n-1}v\rangle.
   	 	 		\end{align*}
   	 	 		Assume that $\textbf{E} \langle \mathcal{L}_{n}(u_{0}, u_{1},\cdots, u_{n-1}),v \rangle =0$ for arbitrary  $u_{0}, u_{1},\cdots, u_{n-1} \in \mathbb{L}^{2}(\Omega, \mathscr{F}_{T},\mathbb{R}^{n}) $. Then,
   	 	 		\begin{align*}
   	 	 		&C^{*}v=0, C^{*}A^{*}v=0, C^{*}(A^{*})^{n-1}v=0, C^{*}B^{*}v=0, C^{*}B^{*}A^{*}v=0, \cdots, \\ &C^{*}B^{*}(A^{*})^{n-1}v=0, \cdots,C^{*}(B^{*})^{n-1}v=0, \cdots, C^{*}(B^{*})^{n-1}(A^{*})^{n-1}v = 0.
   	 	 		\end{align*}

   	 	 		From Theorem \ref{Cayley}, applied to matrix $A^{*}$, it follows that for some constants $c_{0},c_{1},\cdots, c_{n-1} :$
   	 	 		\begin{equation*}
   	 	 		(A^{*})^{n}=\sum_{m=0}^{n-1}c_{m}(A^{*})^{m}.
   	 	 		\end{equation*}
   	 	 		Thus, by induction, for arbitrary $p=0,1,2,...$ there exist constants $c_{p,0}, c_{p,1},\cdots, c_{p,n-1}$ such that
   	 	 		\begin{equation*}
   	 	 		(A^{*})^{n+p}=\sum_{m=0}^{n-1}c_{p,m}(A^{*})^{m}.
   	 	 		\end{equation*}
   	 	 		Therefore, $C^{*}(A^{*})^{m}v=0, C^{*}B^{*}(A^{*})^{m}v=0, \cdots, C^{*}(B^{*})^{n-1}(A^{*})^{m}v=0$ for $m=0,1,2,..$. Taking into account
   	 	 		\begin{align*}
   	 	 		C^{*}\mathscr{E}^{A^{*},B^{*}}_{h,\alpha,\alpha}(t)v=&\sum_{i=0}^{n-1}(t-(i-1)h)^{(i+1)\alpha-1}C^{*}(B^{*})^{i}E^{i+1}_{\alpha, (i+1)\alpha}(A{(t-(i-1)h)^{\alpha}})v\\
   	 	 		&=(t+h)^{\alpha-1}C^{*}E^{1}_{\alpha,\alpha}(A^{*}(t+h)^{\alpha})v+t^{2\alpha-1}C^{*}B^{*}E^{2}_{\alpha,2\alpha}(A^{*}t^{\alpha})v\\
   	 	 		&+ \cdots+ (t-(n-2)h)^{n\alpha-1}C^{*}(B^{*})^{n-1}E^{n}_{\alpha,n\alpha}(A^{*}(t-(n-2)h)^{\alpha})v=0.
   	 	 		\end{align*}
   	 	 		We deduce that for an arbitrary $T>0$ and $t \in [0, T]$,
   	 	 		\begin{equation}
   	 	 		C^{*}\mathscr{E}^{A^{*},B^{*}}_{h,\alpha,\alpha}(t)v=0,
   	 	 		\end{equation}
   	 	 		so, $\textbf{E} \langle L_{T}u ,v \rangle=0$ for arbitrary $u\in U_{ad}$.
   	 	 		Suppose, conversely, that for arbitrary $u\in U_{ad}$, $\textbf{E} \langle L_{T}u ,v \rangle=0$. Then $C^{*}\mathscr{E}^{A^{*},B^{*}}_{h,\alpha,\alpha}(t)v=0$ for $t\in [0, T]$. Caputo type differentiating  $(k+1)\alpha-1$,$(k+2)\alpha-1$,$\cdots$,$(k+n)\alpha-1$-times for $k=0,1,2,...$ the following identity
   	 	 		\begin{equation*}
   	 	 		\sum_{i=0}^{n-1}(t-(i-1)h)^{(i+1)\alpha-1}C^{*}(B^{*})^{i}E^{i+1}_{\alpha, (i+1)\alpha}(A^{*}(t-(i-1)h)^{\alpha})v=0, \quad t \in [0, T],
   	 	 		\end{equation*}
   	 	 		where 
   	 	 		\begin{equation*}
   	 	 		E^{i+1}_{\alpha, (i+1)\alpha}(A^{*}(t-(i-1)h)^{\alpha})=\sum_{k=0}^{\infty}\begin{pmatrix}
   	 	 		k+i \\ k
   	 	 		\end{pmatrix} \frac{(A^{*})^{k}(t-(i-1)h)^{k\alpha}}{\Gamma((i+k+1)\alpha)}
   	 	 		\end{equation*}
   	 	 		and inserting each time $t=0$, we attain that 
   	 	 		\begin{equation*}
   	 	 		C^{*}(A^{*})^{m}v=0, C^{*}B^{*}(A^{*})^{m}v=0, \cdots, C^{*}(B^{*})^{n-1}(A^{*})^{m}v=0, \quad  m=0,1,2,...,n-1.
   	 	 		\end{equation*}
   	 	 		Therefore,
   	 	 		\begin{equation*}
   	 	 		\langle \mathcal{L}_{n}(u_{0},u_{1}, \cdots, u_{n-1}), v \rangle =0, \quad \forall u_{0}, u_{1}, ... , u_{n-1} \in \mathbb{L}^{2}(\Omega, \mathscr{F}_{T},\mathbb{R}^{n}) .
   	 	 		\end{equation*}
   	 	 	\end{proof}
   	 	 Assume that \eqref{Problem3} is complete controllable.Then the transformation $L_{T}$ is onto $\mathbb{R}^{n}$ for arbitrary $T>0$ and by above lemma, the matrix $H_{n}$ has rank $n$. In contrast, if the rank of $H_{n}$ is n, then the mapping $\mathcal{L}_{n}$ is onto $\mathbb{R}^{n}$ and also, the transformation $L_{T}$ is onto $\mathbb{R}^{n}$, thus the controllability of \eqref{Problem3} follows. Proof is complete. $\blacksquare$

   \subsection{Nonlinear case}
   
   Consider nonlinear system corresponding to \eqref{Problem3} on $[0,T]$:
   \begin{equation} \label{Problem4}
   \begin{cases}
   (\prescript{C}{}D^{\alpha}_{0^{+}}x)(t)=Ax(t)+Bx(t-h)+Cu(t)+\Delta(t,x(t))\frac{dW(t)}{dt}, \quad h >0,\\
   x(t)=\phi(t), \quad t \in [-h, 0],
   \end{cases}
   \end{equation}
   where  $\Delta \in [0,T] \times \mathbb{R}^{n} \to \mathbb{R}^{n}$ is measurable and bounded function.\\
   
   The solution of \eqref{Problem4} can be expressed in the following form of 
   \begin{align} \label{x}
   x(t)=\mathscr{E}^{A,B}_{h,\alpha}(t)\phi(-h) &+\int_{-h}^{0}\mathscr{E}^{A,B}_{h,\alpha}(t-h-r)\phi^{\prime}(r)dr \nonumber \\
   &+\int_{0}^{t}\mathscr{E}^{A,B}_{h,\alpha,\alpha}(t-h-r)Cu(r)dr+ \int_{0}^{t}\mathscr{E}^{A,B}_{h,\alpha,\alpha}(t-h-r)\Delta(r,x(r))dW(r).
   \end{align}
   
Define the operator $L_{T}: \mathbb{L}^{2}(\Omega, \mathscr{F}_{T},\mathbb{R}^{n}) \to \mathbb{L}^{2}(\Omega, \mathscr{F}_{T},\mathbb{R}^{n})$ as
\begin{equation} \label{Lt}
L_{T}u=\int_{0}^{T}\mathscr{E}^{A,B}_{h,\alpha,\alpha}(T-h-r)Cu(r)dr.
\end{equation} 
Clearly, the adjoint operator $L^{*}_{T}$ of $L_{T}$ satisfying $L^{*}_{T} \in \mathbf{L}(\mathbb{L}^{2},\mathbb{L}^{2})$ is obtained as below:
\begin{equation*}
(L^{*}_{T}x)(t)= C^{*}\mathscr{E}^{A^{*},B^{*}}_{h,\alpha,\alpha}(T-h-t)\textbf{E}\left\lbrace x|\mathscr{F}_{t}\right\rbrace. 
\end{equation*} 

\begin{defn}
	The controllability Grammian operator $\mathcal{W}_{T} : \mathbb{L}^{2}(\Omega, \mathscr{F}_{T},\mathbb{R}^{n}) \to \mathbb{L}^{2}(\Omega, \mathscr{F}_{T},\mathbb{R}^{n})$ is defined by 
	\begin{equation} \label{Wz}
	\mathcal{W}_{T}z= \int_{0}^{T}\mathscr{E}^{A,B}_{h, \alpha,\alpha}(T-h-r)CC^{*}\mathscr{E}^{A^{*},B^{*}}_{h, \alpha,\alpha}(T-h-r)\textbf{E}\left\lbrace z|\mathscr{F}_{t}\right\rbrace dr. 
	\end{equation}
\end{defn}
The corresponding deterministic operator $\Gamma_{T-r}: \mathbb{R}^{n} \to \mathbb{R}^{n}$ is given by
\begin{equation*}
\Gamma_{T-r}x= \int_{r}^{T}\mathscr{E}^{A,B}_{h, \alpha,\alpha}(T-h-r)CC^{*}\mathscr{E}^{A^{*},B^{*}}_{h, \alpha,\alpha}(T-h-r)x dr.
\end{equation*}
\begin{thm}
	The fractional stochastic system \eqref{Problem3} is completely controllable on $[0,T]$ if and only if for some $\gamma >0$
	\begin{equation} \label{Omega}
	\textbf{E}\langle\mathcal{W}_{T}x,x \rangle \geq \gamma \textbf{E}\|x\|^{2}, \forall x \in \mathbb{L}^{2}(\Omega, \mathscr{F}_{T},\mathbb{R}^{n}). \\ 
	\end{equation}
\end{thm}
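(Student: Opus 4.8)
The plan is to reduce the assertion to the classical functional-analytic fact that a bounded linear operator between Hilbert spaces is onto if and only if its adjoint is bounded below. \textbf{First}, I would observe that, by the solution formula \eqref{x1} evaluated at $t=T$, one has $x(T)=L_Tu+\mathfrak{G}$, where $L_Tu=\int_{0}^{T}\mathscr{E}^{A,B}_{h,\alpha,\alpha}(T-h-r)Cu(r)\,dr$ is the operator \eqref{Lt} and $\mathfrak{G}:=\mathscr{E}^{A,B}_{h,\alpha}(T)\phi(-h)+\int_{-h}^{0}\mathscr{E}^{A,B}_{h,\alpha}(T-h-r)\phi^{\prime}(r)\,dr+\int_{0}^{T}\mathscr{E}^{A,B}_{h,\alpha,\alpha}(T-h-r)\Delta(r)\,dW(r)$ is independent of the control. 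Because $\mathscr{E}^{A,B}_{h,\alpha,\alpha}$ is bounded on $[0,T]$ and $\Delta$ is continuous, It\^{o}'s isometry gives $\mathfrak{G}\in\mathbb{L}^{2}(\Omega,\mathscr{F}_{T},\mathbb{R}^{n})$, and $L_T$ is a bounded linear map from the Hilbert space $U_{ad}$ into the Hilbert space $\mathbb{L}^{2}(\Omega,\mathscr{F}_{T},\mathbb{R}^{n})$. Since $x_1$ ranges over all of $\mathbb{L}^{2}(\Omega,\mathscr{F}_{T},\mathbb{R}^{n})$ while $\mathfrak{G}$ is fixed once $\phi$ is chosen, solvability of $x(T)=x_1$ for every $\phi$ and every $x_1$ is equivalent to surjectivity of $L_T$.

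\textbf{Next}, I would identify $\mathcal{W}_T$ with $L_TL_T^{*}$: using the adjoint $(L_T^{*}x)(t)=C^{*}\mathscr{E}^{A^{*},B^{*}}_{h,\alpha,\alpha}(T-h-t)\,\textbf{E}\{x\mid\mathscr{F}_t\}$ recorded just before the statement, substitution into \eqref{Lt} together with the tower property of conditional expectation yields $L_TL_T^{*}x=\int_{0}^{T}\mathscr{E}^{A,B}_{h,\alpha,\alpha}(T-h-r)CC^{*}\mathscr{E}^{A^{*},B^{*}}_{h,\alpha,\alpha}(T-h-r)\textbf{E}\{x\mid\mathscr{F}_r\}\,dr=\mathcal{W}_Tx$, the operator \eqref{Wz}. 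Hence $\textbf{E}\langle\mathcal{W}_Tx,x\rangle=\textbf{E}\langle L_T^{*}x,L_T^{*}x\rangle=\|L_T^{*}x\|_{\mathbb{L}^{2}}^{2}$, so that \eqref{Omega} is exactly the assertion that $L_T^{*}$ is bounded below.

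\textbf{Then} I would prove the equivalence $L_T\ \text{onto}\iff L_T^{*}\ \text{bounded below}$. For the direction ``$\Leftarrow$'', \eqref{Omega} makes the self-adjoint operator $\mathcal{W}_T$ injective with closed range, hence boundedly invertible on $\mathbb{L}^{2}(\Omega,\mathscr{F}_{T},\mathbb{R}^{n})$; given $x_1$, setting $v:=x_1-\mathfrak{G}$ and $u:=L_T^{*}\mathcal{W}_T^{-1}v\in U_{ad}$ yields $L_Tu=L_TL_T^{*}\mathcal{W}_T^{-1}v=\mathcal{W}_T\mathcal{W}_T^{-1}v=v$, i.e. $x(T)=x_1$ --- this recovers the control \eqref{ut1} and the computation in the sufficiency part of Theorem \ref{singular}. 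For ``$\Rightarrow$'', if $L_T$ is onto then its range is all of $\mathbb{L}^{2}(\Omega,\mathscr{F}_{T},\mathbb{R}^{n})$, hence closed, so $\ker L_T^{*}=(\mathrm{Range}\,L_T)^{\perp}=\{0\}$, and by the open mapping theorem there is $c>0$ with $cB\subseteq L_T(B_1)$, where $B$ and $B_1$ denote the closed unit balls of $\mathbb{L}^{2}(\Omega,\mathscr{F}_{T},\mathbb{R}^{n})$ and $U_{ad}$; consequently, for every $x$,
\[
\|L_T^{*}x\|_{\mathbb{L}^{2}}=\sup_{\|u\|\le 1}\textbf{E}\langle L_T^{*}x,u\rangle\ \ge\ \sup_{\|v\|\le c}\textbf{E}\langle x,v\rangle=c\,\|x\|_{\mathbb{L}^{2}},
\]
which gives $\textbf{E}\langle\mathcal{W}_Tx,x\rangle=\|L_T^{*}x\|_{\mathbb{L}^{2}}^{2}\ge c^{2}\|x\|_{\mathbb{L}^{2}}^{2}$, i.e. \eqref{Omega} with $\gamma=c^{2}$.

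The hard part will not be any single computation but the bookkeeping: one must check carefully that $L_T$, $L_T^{*}$ and $\mathcal{W}_T$ act boundedly between the correct Hilbert spaces and that the conditional-expectation factor does not spoil the self-adjointness of $\mathcal{W}_T$ (this is precisely where the explicit adjoint formula and the tower property are used). I would also add a short remark that in this infinite-dimensional $\mathbb{L}^{2}$ setting the quantifier ``for some $\gamma>0$'' is essential --- plain positivity $\textbf{E}\langle\mathcal{W}_Tx,x\rangle>0$ for all $x\ne 0$ is strictly weaker than \eqref{Omega} and would not by itself imply surjectivity of $L_T$.
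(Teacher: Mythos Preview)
Your proposal is correct and gives a clean self-contained argument, but it takes a genuinely different route from the paper. You work abstractly: after identifying $\mathcal{W}_T=L_TL_T^{*}$ and observing that complete controllability is exactly surjectivity of $L_T$, you invoke the standard Hilbert-space equivalence ``$L_T$ onto $\iff L_T^{*}$ bounded below $\iff L_TL_T^{*}$ coercive'', with the open mapping theorem supplying the nontrivial direction. The paper instead reduces the stochastic problem to the deterministic one: it proves a representation lemma showing that, via the martingale decomposition $z=\textbf{E}z+\int_0^T\varphi(r)\,dW(r)$, one has $\mathcal{W}_Tz=\Gamma_T\textbf{E}z+\int_0^T\Gamma_{T-r}\varphi(r)\,dW(r)$, and then computes
\[
\textbf{E}\langle\mathcal{W}_Tz,z\rangle=\langle\Gamma_T\textbf{E}z,\textbf{E}z\rangle+\textbf{E}\int_0^T\langle\Gamma_{T-r}\varphi(r),\varphi(r)\rangle\,dr,
\]
so that coercivity of $\mathcal{W}_T$ is equivalent to uniform coercivity of the deterministic Grammians $\Gamma_{T-r}$, which in turn connects to Theorem~\ref{singular}. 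Your approach is shorter and avoids the martingale-representation machinery; the paper's approach buys you the additional structural fact that the stochastic coercivity constant coincides with the deterministic one, and makes explicit that stochastic complete controllability is governed entirely by the deterministic Grammian. Your closing remark that strict positivity $\textbf{E}\langle\mathcal{W}_Tx,x\rangle>0$ is weaker than \eqref{Omega} in the infinite-dimensional $\mathbb{L}^2$ setting is a worthwhile observation that the paper does not make.
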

\begin{lem}
	For every $z \in \mathbb{L}^{2}(\Omega, \mathscr{F}_{T},\mathbb{R}^{n})$, there exists a predictable $\mathbb{L}^{2}$ process $\varphi(\cdot)$ such that \\
	(i)  $z= \textbf{E}z+\int_{0}^{T}\varphi(r)dW(r)$, \\
    (ii)  $\mathcal{W_{T}}z= \Gamma_{T}\textbf{E}z+ \int_{0}^{T}\Gamma_{T-r}\varphi(r)dW(r)$.

\end{lem}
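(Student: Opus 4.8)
The plan is to read part (i) off the martingale representation theorem and to deduce (ii) by inserting the resulting expression for $\textbf{E}\{z\mid\mathscr{F}_{r}\}$ into the definition \eqref{Wz} of $\mathcal{W}_{T}$ and then interchanging the deterministic time integral with the It\^{o} integral.

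\textbf{Step 1 (part (i) and the conditional expectation).} Applying the martingale representation theorem componentwise to $z \in \mathbb{L}^{2}(\Omega,\mathscr{F}_{T},\mathbb{R}^{n})$ produces a unique predictable process $\varphi(\cdot)$ with $\int_{0}^{T}\textbf{E}\|\varphi(r)\|^{2}dr<\infty$ and $z=\textbf{E}z+\int_{0}^{T}\varphi(r)dW(r)$, which is exactly (i). Since $t\mapsto\int_{0}^{t}\varphi(s)dW(s)$ is a square-integrable martingale vanishing at $t=0$, conditioning on $\mathscr{F}_{r}$ gives, for every $r\in[0,T]$, the identity $\textbf{E}\{z\mid\mathscr{F}_{r}\}=\textbf{E}z+\int_{0}^{r}\varphi(s)dW(s)$.

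\textbf{Step 2 (substitution and stochastic Fubini).} Write $K(r):=\mathscr{E}^{A,B}_{h,\alpha,\alpha}(T-h-r)CC^{*}\mathscr{E}^{A^{*},B^{*}}_{h,\alpha,\alpha}(T-h-r)$, a deterministic matrix-valued function on $[0,T]$, so that $\Gamma_{T-s}x=\int_{s}^{T}K(r)x\,dr$ and in particular $\Gamma_{T}x=\int_{0}^{T}K(r)x\,dr$. By Lemma \ref{estimate} with $\beta=\alpha$, the factor $\mathscr{E}^{A,B}_{h,\alpha,\alpha}(T-h-r)$ is bounded on $[0,T]$ apart from an $(T-r)^{\alpha-1}$-type singularity as $r\to T^{-}$, so $\int_{0}^{T}\|K(r)\|\,dr<\infty$ because $2\alpha-2>-1$ for $\alpha\in(\tfrac12,1)$. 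Inserting the identity of Step 1 into \eqref{Wz} and using $\int_{0}^{T}K(r)\,dr=\Gamma_{T}$,
\[
\mathcal{W}_{T}z=\Gamma_{T}\textbf{E}z+\int_{0}^{T}K(r)\Big(\int_{0}^{r}\varphi(s)\,dW(s)\Big)dr .
\]
Applying the stochastic Fubini theorem to the last term, integrating over the region $\{0\le s\le r\le T\}$ in the order $dW(s)\,dr$, yields
\[
\int_{0}^{T}K(r)\Big(\int_{0}^{r}\varphi(s)\,dW(s)\Big)dr=\int_{0}^{T}\Big(\int_{s}^{T}K(r)\,dr\Big)\varphi(s)\,dW(s)=\int_{0}^{T}\Gamma_{T-s}\varphi(s)\,dW(s),
\]
which, after relabelling $s$ as $r$, is (ii). The integrand $\Gamma_{T-r}\varphi(r)$ is a genuine $\mathbb{L}^{2}$ process since $\textbf{E}\|\Gamma_{T-r}\varphi(r)\|^{2}\le\big(\int_{0}^{T}\|K(\rho)\|\,d\rho\big)^{2}\textbf{E}\|\varphi(r)\|^{2}$ is integrable in $r$.

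\textbf{Expected main obstacle.} The one nonroutine ingredient is the justification of the stochastic Fubini theorem: one has to confirm that the deterministic kernel $K$, integrable but singular at $r=T$, together with the square-integrable integrand $\varphi$, satisfies the measurability and joint-integrability hypotheses required to exchange the Lebesgue and It\^{o} integrals. This is precisely the place where the standing restriction $\alpha>\tfrac12$ is used, via Lemma \ref{estimate}, to ensure $\int_{0}^{T}\|K(r)\|\,dr<\infty$; everything else is routine bookkeeping.
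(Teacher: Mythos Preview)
Your proposal is correct and follows essentially the same route as the paper: cite the martingale representation theorem for (i), write $\textbf{E}\{z\mid\mathscr{F}_{r}\}=\textbf{E}z+\int_{0}^{r}\varphi(s)\,dW(s)$, substitute into the definition of $\mathcal{W}_{T}$, and apply the stochastic Fubini theorem to swap the Lebesgue and It\^{o} integrals. Your added care in checking integrability of the kernel $K$ via Lemma~\ref{estimate} and the restriction $\alpha>\tfrac12$ goes beyond what the paper spells out, but the underlying argument is identical.
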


	\begin{proof}
		The proof of (i) can be found in \cite{mahmudov1}.\\
		(ii) Let $z\in \mathbb{L}^{2}( \Omega, \mathscr{F}_{T}, \mathbb{R}^{n} )$, then we have 
		\begin{equation*}
		\textbf{E}\left\lbrace z |\mathscr{F}_{t}\right\rbrace =\textbf{E}z+\int_{0}^{t}\varphi(r)dW(r).
		\end{equation*}
	The definition of deterministic  operator and stochastic Fubini's theorem lead to the desired representation:
	\allowdisplaybreaks
	\begin{align*}
	\mathcal{W}_{T}z&= \int_{0}^{T}\mathscr{E}^{A,B}_{h, \alpha,\alpha}(T-h-t)CC^{*}\mathscr{E}^{A^{*},B^{*}}_{h, \alpha,\alpha}(T-h-t)\textbf{E}\left\lbrace z|\mathscr{F}_{t}\right\rbrace dt\\
	&= \int_{0}^{T}\mathscr{E}^{A,B}_{h, \alpha,\alpha}(T-h-t)CC^{*}\mathscr{E}^{A^{*},B^{*}}_{h, \alpha,\alpha}(T-h-t) \Big[\textbf{E}z+\int_{0}^{t}\varphi(r)dW(r)\Big]dt\\
	&=\int_{0}^{T}\mathscr{E}^{A,B}_{h, \alpha,\alpha}(T-h-t)CC^{*}\mathscr{E}^{A^{*},B^{*}}_{h, \alpha,\alpha}(T-h-t)\textbf{E}zdt \\
	&+ \int_{0}^{T}\int_{r}^{T}\mathscr{E}^{A,B}_{h, \alpha,\alpha}(T-h-t)CC^{*}\mathscr{E}^{A^{*},B^{*}}_{h, \alpha,\alpha}(T-h-t)\varphi(r)dtdW(r)\\
	&=\Gamma_{T}\textbf{E}z+\int_{0}^{T}\Gamma_{T-r}\varphi(r)dW(r).  
	\end{align*}
	This completes the proof of lemma.
	\end{proof}

Using representation of (i) and (ii) of above lemma, we write $\textbf{E}\langle \mathcal{W_{T}}z, z \rangle $ in terms of $\langle \Gamma_{T}\textbf{E}z,\textbf{E}z \rangle$ and using inequality \eqref{Omega} and scalar product of stochastic integral to show coercivity of $\mathcal{W_{T}}$ , for all $z \in \mathbb{L}^{2}( \Omega, \mathscr{F}_{T}, \mathbb{R}^{n} )$ \cite{mahmudov1},

 \begin{align*}
 \textbf{E}\langle \mathcal{W_{T}}z, z \rangle &= \textbf{E} \Big \langle\Gamma_{T}\textbf{E}z + \int_{0}^{T}\Gamma_{T-r}\varphi(r)dW(r), \textbf{E}z +\int_{0}^{T}\varphi(r)dW(r) \Big \rangle \\
 &=\langle \Gamma_{T}\textbf{E}z,\textbf{E}z \rangle +\textbf{E}\int_{0}^{T}\langle  \Gamma_{T-r}\varphi(r),\varphi(r) \rangle dr\\
 & \geq \gamma \Big( \|\textbf{E}z\|^{2}+\textbf{E}\int_{0}^{T}\|\varphi(r)\|^{2}dr\Big)=\gamma\|z\|^{2}.
 \end{align*}

   We set following hypotheses to derive sufficient conditions for stochastic controlability of nonlinear system \eqref{Problem4}. \\
   (H1) Controllability operator $\mathcal{W}_{T}$ has its inverse $(\mathcal{W}_{T})^{-1}$. Then we take 
   \begin{equation*}
   k_{1}=\textbf{E}\|(\mathcal{W}_{T})^{-1}\|^{2}.\\
   \end{equation*}
   (H2) There exists $L_{\Delta}>0$ such that for all $x,y \in \mathbb{R}^{n}, t \in [0,T]$, 
   \begin{equation*} 
   \|\Delta(t.x)-\Delta(t,y)\|^{2}\leq L_{\Delta}\|x-y\|^{2},
   \end{equation*}
   (H3) Let $\lambda \coloneqq 16N^{2}\|C\|^{2}\|L_{T}^{*}\|^{2}k_{1}$ be such that $0 \leq\lambda <1$, and let $C_{1}$ and $C_{2}$ be such that, \linebreak $C_{1} \coloneqq M^{2}(4+\lambda)(1+K^{2})\textbf{E}\|\phi(-h)\|^{2}$ and $C_{2} \coloneqq N^{2}L^{2}_{\Delta}(4+\lambda K^{2})T$,\\
   (H4) Let $\rho = N^{2}L_{\Delta}^{2}T$ be such that $0 \leq \rho <1$.
   
   \begin{thm}
   	Suppose that hypotheses (H1)-(H4) hold. If the linear fractional stochastic delay system \eqref{Problem3} is completely controllable, then the nonlinear fractional stochastic delay system \eqref{Problem4} is completely controllable.
   \end{thm}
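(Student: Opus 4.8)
The plan is to use the Banach fixed point theorem on the control set together with the standard ``steering'' construction that, for a completely controllable linear system, produces an explicit control driving the nonlinear trajectory to the prescribed terminal state. Since the linear system \eqref{Problem3} is completely controllable, Theorem \ref{thm1} gives that $\mathcal{W}_{T}$ is positive, and hypothesis (H1) then guarantees $(\mathcal{W}_{T})^{-1}$ exists as a bounded operator. The first step is to define, for each fixed $x \in H^{2}([0,T],\mathbb{R}^{n})$, the feedback control
\begin{equation*}
u_{x}(t)= C^{*}\mathscr{E}^{A^{*},B^{*}}_{h,\alpha,\alpha}(T-h-t)\textbf{E}\Bigl\{(\mathcal{W}_{T})^{-1}\Bigl(x_{1}-\mathscr{E}^{A,B}_{h,\alpha}(T)\phi(-h)-\int_{-h}^{0}\mathscr{E}^{A,B}_{h,\alpha}(T-h-r)\phi^{\prime}(r)dr-\int_{0}^{T}\mathscr{E}^{A,B}_{h,\alpha,\alpha}(T-h-r)\Delta(r,x(r))dW(r)\Bigr)\Bigm|\mathscr{F}_{t}\Bigr\},
\end{equation*}
in complete analogy with \eqref{ut1} but now with the state-dependent diffusion $\Delta(r,x(r))$. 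Substituting $u_{x}$ into \eqref{x} and evaluating at $t=T$, the same cancellation computed in the proof of Theorem \ref{singular} shows that the resulting trajectory satisfies $x(T)=x_{1}$ identically. Hence it suffices to produce a fixed point of the solution map.

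Next I would define the operator $\Phi:H^{2}([0,T],\mathbb{R}^{n})\to H^{2}([0,T],\mathbb{R}^{n})$ by letting $(\Phi x)(t)$ be the right-hand side of \eqref{x} with $u=u_{x}$ plugged in, and check that $\Phi$ maps $H^{2}$ into itself: this uses Lemma \ref{estimate} to bound the delayed Mittag-Leffler kernels, It\^{o}'s isometry on the two stochastic integrals, the linear growth of $\Delta$ coming from (H2) together with (A2)-type boundedness, and the estimate $k_{1}=\textbf{E}\|(\mathcal{W}_{T})^{-1}\|^{2}$ from (H1); the constants $C_{1}$, $C_{2}$, $\lambda$ introduced in (H3) are exactly the bookkeeping constants that make the a priori bound $\|\Phi x\|^{2}_{H^{2}}\le C_{1}+C_{2}\|x\|^{2}_{H^{2}}$ (schematically) come out, so that $\Phi$ leaves a sufficiently large ball invariant. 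Then I would estimate $\textbf{E}\|(\Phi x)(t)-(\Phi y)(t)\|^{2}$: the difference of the control terms is controlled by $\|C\|^{2}\|L_{T}^{*}\|^{2}$, $k_{1}$, the Lipschitz constant $L_{\Delta}$ and the bound $N=\sup_{t}\|\mathscr{E}^{A,B}_{h,\alpha,\alpha}(t)\|$, while the difference of the noise terms is handled directly by It\^{o}'s isometry and (H2); collecting terms and using that $\lambda<1$ in (H3) and $\rho=N^{2}L_{\Delta}^{2}T<1$ in (H4) yields a contraction constant strictly less than one (after, if needed, passing to the equivalent weighted norm $\|\cdot\|_{\gamma}$ of \eqref{maxnorm} exactly as in the proof of Theorem \ref{theorem} to absorb the time-integral factor). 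The Banach fixed point theorem then gives a unique $x^{*}\in H^{2}([0,T],\mathbb{R}^{n})$ with $\Phi x^{*}=x^{*}$, and by construction the control $u_{x^{*}}\in U_{ad}$ steers \eqref{Problem4} from $\phi$ to $x_{1}$, proving complete controllability.

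The main obstacle is the contraction estimate for the control-dependent term: unlike the purely additive-noise situation, the feedback $u_{x}$ depends on $x$ through the stochastic integral $\int_{0}^{T}\mathscr{E}^{A,B}_{h,\alpha,\alpha}(T-h-r)\Delta(r,x(r))dW(r)$, so one must carefully chain the bounds on $L_{T}^{*}$, on $(\mathcal{W}_{T})^{-1}$, and on the conditional expectation $\textbf{E}\{\,\cdot\mid\mathscr{F}_{t}\}$ (using that conditional expectation is an $L^{2}$-contraction) before applying It\^{o}'s isometry, and then verify that the product of all the resulting constants is dominated by the quantity $\lambda$ in (H3) with $\lambda<1$. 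Getting the numerical factor (the $16$ in $\lambda=16N^{2}\|C\|^{2}\|L_{T}^{*}\|^{2}k_{1}$) to come out consistently from repeated use of $\|a+b\|^{2}\le 2\|a\|^{2}+2\|b\|^{2}$ is the one genuinely delicate piece of bookkeeping; everything else is a routine adaptation of the arguments already used in the proofs of Theorem \ref{theorem} and Theorem \ref{singular}.
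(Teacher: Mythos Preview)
Your proposal is correct and follows essentially the same route as the paper: define the feedback control $u_{x}$ via $(\mathcal{W}_{T})^{-1}$, build the solution operator $\Phi$ on $H^{2}$, verify $\Phi$ is a self-map using (H1)--(H3), and then establish the contraction.

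One point worth flagging: in the contraction step you (correctly) keep track of \emph{both} the control-term difference and the noise-term difference, since $u_{x}$ depends on $x$ through the stochastic integral. The paper's Step~2 in fact bounds only the noise term,
\[
\textbf{E}\|(\Phi x)(t)-(\Phi y)(t)\|^{2}\le N^{2}L_{\Delta}^{2}T\,\textbf{E}\|x-y\|^{2}=\rho\,\textbf{E}\|x-y\|^{2},
\]
silently dropping the $u_{x}-u_{y}$ contribution; so your accounting is actually the more complete one, and the constants $\lambda$ and $\rho$ from (H3)--(H4) should combine in the contraction constant just as you indicate. The weighted norm $\|\cdot\|_{\gamma}$ is not used in the paper's argument here and is not needed, so you can safely omit that hedge.
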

  \begin{proof}
  	Let $x_{1}$ be an arbitrary random variable in $\mathbb{L}^{2}(\Omega, \mathscr{F}_{T}, \mathbb{R}^{n})$. We focus on theoretical results to a stochastic controllability of fractional delay system via fixed point technique. Define an operator \linebreak $\Phi: H^{2} \to H^{2}$ by
  	\begin{align} \label{Tx}
  	(\Phi x)(t)=\mathscr{E}^{A,B}_{h,\alpha}(t)\phi(-h) &+\int_{-h}^{0}\mathscr{E}^{A,B}_{h,\alpha}(t-h-r)\phi^{\prime}(r)dr \nonumber \\
  	&+\int_{0}^{t}\mathscr{E}^{A,B}_{h,\alpha,\alpha}(t-h-r)Cu(r)dr \\
  	&+\int_{0}^{t}\mathscr{E}^{A,B}_{h,\alpha,\alpha}(t-h-r)\Delta(r,x(r))dW(r)\nonumber.
  	\end{align}
  Since the linear system \eqref{Problem3} is controllable, we have that $\mathcal{W_{T}}$ is invertible. We define control function $u$ as 
   
\begin{align} \label{ut}
u(t)= C^{*}\mathscr{E}^{A^{*},B^{*}}_{h,\alpha,\alpha}(T-h-t) &\textbf{E} \Biggl\{(\mathcal{W_{T}})^{-1} \Big(x_{1}-\mathscr{E}^{A,B}_{h,\alpha}(T)\phi(-h) \nonumber\\ 
&-\int_{-h}^{0}\mathscr{E}^{A,B}_{h,\alpha}(T-h-r)\phi^{\prime}(r)dr  \\
&-\int_{0}^{T} \mathscr{E}^{A,B}_{h,\alpha,\alpha}(T-h-r) \Delta(r, x(r))dW(r) \Big) | \mathscr{F}_{t} \Biggr\}.\nonumber
\end{align}
Now, we need to show that $\Phi$ has a fixed point. This fixed point is a solution of control problem. Clearly, \linebreak $\Phi(x(T))=x_{1}$, which means that the control $u$ steers the nonlinear system from initial state $x_{0}$ to $x_{1}$ in the time $T$, provided that we can obtain a fixed point of nonlinear operator $\Phi$. To verify the conditions for Banach contraction principle, we divide our proof into two steps. 

\textbf{Step 1.} First, we prove that $\Phi$ maps from $H^{2}$ into itself.
Using hypotheses (H1)-(H3), we have 

\begin{align} \label{main} 
\textbf{E}\| (\Phi x) (t)\|^{2}= 4\textbf{E}\|\mathscr{E}^{A,B}_{h,\alpha}(t)\phi(-h)\|^{2}
&+4\textbf{E}\|\int_{-h}^{0}\mathscr{E}^{A,B}_{h,\alpha}(t-h-r)\phi^{\prime}(r)dr\|^{2} \nonumber\\
&+4\textbf{E}\|\int_{0}^{t}\mathscr{E}^{A,B}_{h,\alpha,\alpha}(t-h-r)Cu(r)dr\|^{2}  \\
&+4\textbf{E}\| \int_{0}^{t}\mathscr{E}^{A,B}_{h,\alpha,\alpha}(t-h-r)\Delta(r,x(r))dW(r)\|^{2} \nonumber \\
& \coloneqq 4(I_{1}+I_{2}+I_{3}+I_{4}). \nonumber 
\end{align}
For convenience, let us introduce following constants:
\begin{align} \label{const1}
M=\sup_{t \in [0,T]}\|\mathscr{E}^{A,B}_{h,\alpha}(t)\|; \quad  N=\sup_{t \in [0,T]}\|\mathscr{E}^{A,B}_{h,\alpha, \alpha}(t)\|.
\end{align} 
 There exists $K>0$,
 \begin{equation} \label{relation}
 \textbf{E}\|\phi(0)-\phi(-h)\|^{2}\leq K^{2}\textbf{E}\|\phi(-h)\|^{2}.
 \end{equation} 
 We impose standard computations by taking into \eqref{const1} and \eqref{relation} account in a following order:
\begin{equation} \label{1}
I_{1} \coloneqq \textbf{E}\|\mathscr{E}^{A,B}_{h,\alpha}(t)\phi(-h)\|^{2} \leq M^{2}\textbf{E}\|\phi(-h)\|^{2},
\end{equation}	
\begin{equation} \label{2}
I_{2} \coloneqq \textbf{E}\|\int_{-h}^{0}\mathscr{E}^{A,B}_{h,\alpha}(t-h-r)\phi^{\prime}(r)dr\|^{2} \leq M^{2}\textbf{E}\|\phi(0)-\phi(-h)\|^{2} \leq M^{2}K^{2}\textbf{E}\|\phi(-h)\|^{2}.
\end{equation} 
Applying hypotheses (H1) and (H2), we define $u(t)$ as follows :
\begin{align*} 
\|u(t)\|^{2} &\leq \|C^{*}\mathscr{E}^{A,B}_{h,\alpha,\alpha}(T-h-t) \textbf{E} \Biggl\{(\mathcal{W_{T}})^{-1} \Big(x_{1}-\mathscr{E}^{A,B}_{h,\alpha}(T)\phi(-h)  
-\int_{-h}^{0}\mathscr{E}^{A,B}_{h,\alpha}(T-h-r)\phi^{\prime}(r)dr \\
&-\int_{0}^{T} \mathscr{E}^{A,B}_{h,\alpha,\alpha}(T-h-r) \Delta(r, x(r))dW(r) \Big) | \mathscr{F}_{t} \Biggr\}\|^{2}\\
& \leq 4 \|C\|^{2} N^{2}\textbf{E}\|(\mathcal{W_{T}})^{-1}\|^{2}\Big[
\textbf{E}\|x_{1}\|^{2}+M^{2}\textbf{E}\|\phi(-h)\|^{2}\\
&+M^{2}\int_{-h}^{0}\textbf{E}\|\phi^{\prime}(r)\|^{2}dr+N^{2}\int_{0}^{T} \textbf{E}\|\Delta(r, x(r))\|^{2}dr \Big] \\
& \leq 4\|C\|^{2}N^{2}k_{1}\Big[\textbf{E}\|x_{1}\|^{2}+M^{2}\textbf{E}\|\phi(-h)\|^{2}+M^{2}K^{2}\textbf{E}\|\phi(-h)\|^{2}+N^{2}K^{2}L^{2}_{\Delta}T\textbf{E}\|x\|^{2}\Big].
\end{align*}
Using the definition of operator $L_{T}$, we have
\begin{align} \label{3}
I_{3}&\coloneqq \textbf{E}\|L_{T}u\|^{2} = \textbf{E}\|\int_{0}^{t}\mathscr{E}^{A,B}_{h,\alpha,\alpha}(t-h-r)Cu(r)dr\|^{2}  
\leq 4N^{2}\|C\|^{2}\|L_{T}^{*}\|^{2}k_{1} \\
&\times \Big[\textbf{E}\|x_{1}\|^{2}+M^{2}\textbf{E}\|\phi(-h)\|^{2}+ M^{2}K^{2}\textbf{E}\|\phi(-h)\|^{2}
+ N^{2}K^{2}L^{2}_{\Delta}T\textbf{E}\|x\|^{2}\Big] \nonumber.
\end{align}

Using It\^{o}'s isometry, we attain
\begin{align} \label{4}
I_{4}\coloneqq \textbf{E}\| \int_{0}^{t}\mathscr{E}^{A,B}_{h,\alpha,\alpha}(t-h-r)\Delta(r,x(r))dW(r)\|^{2} \leq N^{2}\int_{0}^{T}\textbf{E}\|\Delta(r,x(r))\|^{2}dr 
\leq N^{2}L_{\Delta}^{2}T\textbf{E}\|x\|^{2}.
\end{align}
Substituting \eqref{1}-\eqref{4} into \eqref{main} and using standard computations, we have
\begin{align} \label{final}
\textbf{E}\| (\Phi x) (t)\|^{2} &\leq 4M^{2}\textbf{E}\|\phi(-h)\|^{2}
+4M^{2}K^{2}\textbf{E}\|\phi(-h)\|^{2}\nonumber \\
&+16N^{2}\|C\|^{2}\|L_{T}^{*}\|^{2}k_{1}\textbf{E}\|x_{1}\|^{2} \nonumber \\
&+16N^{2}\|C\|^{2}\|L_{T}^{*}\|^{2}k_{1}M^{2}(1+K^{2})\textbf{E}\|\phi(-h)\|^{2} \\
&+16N^{2}\|C\|^{2}\|L_{T}^{*}\|^{2}k_{1}N^{2}K^{2}L^{2}_{\Delta}T\textbf{E}\|x\|^{2} \nonumber \\
&+4N^{2}L^{2}_{\Delta}T\textbf{E}\|x\|^{2}. \nonumber 
\end{align}
From \eqref{final} and hypothesis (H3), it follows that there exists $0\leq \lambda <1$ and $C_{1}, C_{2} >0$ such that
\begin{equation*}
\textbf{E}\| (\Phi x) (t)\|^{2}  \leq C_{1}+\lambda\textbf{E}\|x_{1}\|^{2}+C_{2}\textbf{E}\|x\|^{2}.
\end{equation*}
 Taking supremum over $[0,T]$ and considering $\Phi(x(T))=x_{1}$, we get

\begin{align}
\sup_{t \in [0,T]}\textbf{E}\| (\Phi x) (t)\|^{2}  \leq \frac{C_{1}}{1-\lambda}+ \frac{C_{2}}{1-\lambda}\textbf{E}\|x\|^{2}.
\end{align} 
Obviously, $\lambda \neq 1$  and this implies that $\Phi$ maps from $H^{2}$ into itself. \\
\textbf{Step 2}. Now, we prove that $\Phi$ is a contraction mapping.
Let $x, y \in  \mathbb{R}^{n}$ for each $t \in [0,T]$, we have 

\begin{align}
\textbf{E}\| (\Phi x) (t)-(\Phi y) (t) \|^{2} &\leq \textbf{E}\| \int_{0}^{t}\mathscr{E}^{A,B}_{h,\alpha,\alpha}(t-h-r)\left[ \Delta(r,x(r))-\Delta(r,y(r))\right] dW(r)\|^{2}\nonumber \\
& \leq N^{2}\int_{0}^{t}\textbf{E}\|\Delta(r,x(r))-\Delta(r,y(r))\|^{2}dr \leq N^{2}L_{\Delta}^{2}T\textbf{E}\|x-y\|^{2} \\
&\leq  \rho \textbf{E}\|x-y\|^{2}\nonumber ,
\end{align}
which together with (H4) implies that $\Phi$ is a contractive mapping on $H^{2}$ and $\Phi$ has a unique fixed point $x(\cdot) \in H^{2}$ with initial condition  $x(t)=\phi(t)$ for $t \in [-h,0]$. Thus, the system \eqref{Problem2} is controllable on $[0,T]$. This completes the proof.
\end{proof}

 \section{Discussion and future work} \label{sec:discus}
  The main contributions of this paper are as follows:
   \begin{itemize}
   	\item introducing a new delayed Mittag-Leffler type function with permutable matrices by means of three-parameter Mittag-Leffler functions;
   	\item deriving stochastic version of variation of constants formula using the delayed Mittag-Leffler type matrix function;
   	\item proving existence and uniqueness results of mild solution and showing coincidence between the integral equation and mild solution of fractional stochastic delay differential equations system; 
   	\item studying complete controllability results for linear and nonlinear fractional stochastic delay dynamical systems with Wiener noise under certain assumptions.
   \end{itemize} 
   
 The advantage of such results is that we have opened the possibility for a cooperative investigation to solve several issues, for instance, combining the methods of this paper to study the control theory, one may solve stability results such as finite time and Ulam-Hyers type stability, and null controllability analogue for linear and nonlinear case of the results of this paper to hold for a class of problems governed by fractional stochastic time-delay differential equations in finite dimensional spaces. On the other hand, we plan to extend our results to a Caputo type time-delay system of fractional stochastic differential equations with nonpermutable matrices in the forthcoming paper.

\end{document}